\newcommand{\R}{\mathbb{R}}
\newcommand{\C}{\mathbb{C}}
\newcommand{\id}{\text{id}}
\renewcommand{\Re}{\mathrm{Re}\,}
\newcommand{\eps}{\varepsilon}
\renewcommand{\epsilon}{\varepsilon}
\newcommand{\modebound}{0}
\theoremstyle{plain}%
\newtheorem{theorem}{Theorem}
\newtheorem{lemma}[theorem]{Lemma}
\newtheorem*{claim*}{Claim}
\newtheorem{proposition}[theorem]{Proposition}%
\theoremstyle{remark}%
\newtheorem{example}{Example}%
\newtheorem{remark}[example]{Remark}%
\theoremstyle{definition}%
\newtheorem{definition}{Definition}%
\begin{document}

\title[Mode stability of blow-up for wave maps in the absence of symmetry]{Mode stability of blow-up for wave maps in the absence of symmetry}


\author*[1]{\fnm{Max} \sur{Weissenbacher}}\email{mweissen@ic.ac.uk}

\author[2]{\fnm{Herbert} \sur{Koch}}\email{koch@math.uni-bonn.de}

\author[3]{\fnm{Roland} \sur{Donninger}}\email{roland.donninger@univie.ac.at}

\affil*[1]{\orgdiv{Department of Aeronautics}, \orgname{Imperial College London}, \orgaddress{\city{London SW7 2AZ, United Kingdom}}}

\affil[2]{\orgdiv{Mathematisches Institut}, \orgname{Bonn University}, \orgaddress{\city{53115 Bonn, Germany}}}

\affil[3]{\orgdiv{Fakult\"{a}t für Mathematik}, \orgname{University of Vienna}, \orgaddress{\city{1090 Vienna, Austria}}}


\abstract{The wave maps equation in three spatial dimensions with a spherical target admits an explicit blow-up solution. Numerical studies suggest this solution captures the generic blow-up behaviour in the backward light cone of the singularity. In this work, we establish the mode stability of this blow-up solution in the backward light cone of the blow-up point without any assumptions on the symmetries of the perturbation. We classify all smooth mode solutions for growth rates $\lambda$ with $\Re \lambda \geq 0$ and demonstrate that the blow-up solution is stable up to the mode solutions arising from the symmetry group of the wave maps equation. Our proof relies on a decomposition of the linearised wave maps equation into a tractable system of symmetry-equivariant ordinary differential equations (ODEs), utilising the representation theory of the stabiliser of the blow-up solution. We then use the quasi-solution method of Costin--Donninger--Glogi\'{c} to show the absence of non-zero smooth solutions for the resulting system of ODEs.}

\keywords{wave maps, mode stability, blow-up, analysis}

\maketitle


\section{Introduction}\label{sec1}
The wave maps equation is the hyperbolic analogue of the harmonic maps equation and constitutes the simplest example of a geometric wave equation. In particle physics, the wave maps equation is known as the nonlinear sigma model~\cite{gell1960axial}. Let $(M,g)$ be a $d$-dimensional Riemannian manifold and let $\R^{1+n} = \R \times \R^n$ denote Euclidean space equipped with the standard Minkowski metric. Wave maps $u: \R^{1+n} \rightarrow M$ are extremal points of the geometric Lagrangian
\begin{equation}
    L(u) = \int_{\R^{1+n}} - \| \partial_t u \|^2_g + \| \nabla_x u \|^2_g \, dt \, dx.
\end{equation}
Equivalently, if $\kappa: U \rightarrow \R^d$ denotes a coordinate chart on $M$ and $u: \R^{1+n} \rightarrow \R^d$,
\begin{equation} \label{eq:wavemaps}
\Box u_i + \sum_{j,k=1}^d \Gamma^i_{jk}(u) \: Q_0(u_j,u_k) = 0 \qquad i=1, \dots, d
\end{equation}
where $\Box = -\partial_{tt}^2 + \Delta_x$ denotes the wave operator, the null form $Q_0$ is defined by $Q_0(f,g) = -\partial_t f \, \partial_t g + \nabla_x f \cdot \nabla_x g$ and $\Gamma$ denotes the Christoffel symbols of $M$ in the chart $\kappa$.

\subsection{The blow up solution}
When $n=3$ and $M= S^3$, there exists an explicitly known self-similar blow-up solution to the wave maps equation~\eqref{eq:wavemaps}. With the coordinate chart $\kappa$ chosen as stereographic projection, the solution takes the simple form
\begin{equation}
u_0: [0,1) \times \R^3 \rightarrow \R^3, \quad u_0(t,x)=\frac{x}{1-t}.
\end{equation}
The solution $u_0$ was found in closed form by Turok--Spergel~\cite{turok}, after Shatah~\cite{shatah} proved its existence through the use of a variational argument. Numerical analysis suggests that $u_0$ describes the \emph{generic blow-up behaviour} in the backward light cone of the blow-up point~\cite{bizon}. In particular, this suggests that the solution $u_0$ is stable in the backward light cone of its blow-up point in a suitable sense. In this work, we establish that $u_0$ is mode stable modulo its symmetry group. We note that even though $u_0$ is not compactly supported, one may easily construct a blow-up solution with compactly supported initial data with an identical blow-up profile by exploiting the finite speed of propagation of the wave maps equation.

\subsection{Symmetries of the wave maps equation}
Special attention must be paid to the symmetries of the wave maps equation. The wave maps equation for $n=3$ and $M= S^3$ possesses a $17$-parameter group of symmetries, see the detailed discussion in Section~\ref{sec:symmetries}. When applied to the blow-up solution $u_0$, the symmetry group generates a family of blow-up solutions. Therefore the solution $u_0$ can only be stable modulo the symmetry group of the equation. As an example, consider the time translation symmetry $(t,x) \mapsto (t+t_0, x)$, which will either generate a solution which is regular in the backward light cone of $(t,x) = (1,0)$, or a solution which blows up at an earlier time, depending on the sign of $t_0 \in \R$. We note here that the blow-up solution is invariant under the action of $SO(3)$ given by
\begin{equation} \label{eq:SO3_intro}
    u(t,x) \mapsto R u (t , R^T x),
\end{equation}
and the scaling around the point $(t,x) = (1,0)$ given by
\begin{equation} \label{eq:scaling_intro}
    u(t,x) \mapsto u( 1+\lambda (t-1),\lambda x). 
\end{equation}
The symmetries~\eqref{eq:SO3_intro} and~\eqref{eq:scaling_intro} generate the stabiliser of $u_0$. The $13$-dimensional quotient of the full symmetry group modulo the stabiliser of $u_0$ generate mode solutions. In Theorem~\ref{maintheorem} we will show that these are the only mode solutions with growth rates $\Re \lambda \geq 0$.

\subsection{Self-similar coordinates}
We shall restrict our attention to studying the blow-up solution $u_0$ in the backward light cone of its blow-up point $(t,x) = (1,0)$, defined by
\begin{equation}
    C_{(1,0)} = \{(t,x) \in \R^{1+3} \mid 0 \leq t \leq 1, \| x \| \leq 1-t \},
\end{equation}
where $\| \cdot \|$ denotes the Euclidean norm on $\R^3$. In order to facilitate the study of $u_0$ in the backward light cone $C_{(1,0)}$, we introduce self-similar coordinates on $\R^{1+3}$ given by
\begin{equation} \label{selfsimcoordintro}
\tau = -\mathrm{log}(1-t), \quad y=\frac{x}{1-t},
\end{equation}
where $(t,x)$ denote the usual Cartesian coordinates on $\R^{1+3}$. In $(\tau, y)$-coordinates, the backward light cone $C_{(1,0)}$ is transformed into the cylinder $(0, \infty) \times B_1(0) \subset \R^{1+3}$.  In these coordinates 
\begin{equation}
    \Box = - \partial^2_{\tau \tau} - \partial_\tau - 2 \partial^2_{\tau r} - r^2 \partial^2_{rr} - 2 r \partial_r + \Delta_y. 
\end{equation}    
where $r = \left| y \right|$. For the target manifold $S^3$, we use stereographic projection from the south pole $S=(0,0,0,-1) \in S^3$.  The Christoffel symbols are 
\begin{equation}
    \Gamma^i_{jk}(z) = \frac{2}{1+ \left| z \right|^2} \left( z_i \delta_{jk} - z_j \delta_{ik} - z_k \delta_{ij} \right) .
\end{equation} 
Expressed in self-similar coordinates~\eqref{selfsimcoordintro}, the blowup solution then becomes the stationary solution
\begin{equation}
    u_0(\tau, y) = y
\end{equation}
to the wave maps equation
\begin{equation}\label{eq:wm}
 \Box u_i + \frac{2}{1+ \left| u \right|^2} \Big( u_i \sum_{j=1}^3 Q_0(u_j, u_j) - Q_0(u_i, \left| u \right|^2) \Big) = 0, \qquad i= 1,\dots,3,
\end{equation}
where (compare \eqref{eq:wavemaps})
\begin{equation}
    Q_0(f,g) = - \partial_\tau f \, \partial_\tau g + \nabla_y f \cdot \nabla_y g - r \partial_\tau f \, \partial_r g - r \partial_\tau g \, \partial_r f - (r \partial_r f) (r \partial_r g).
\end{equation}    
We note carefully that in this coordinate chart, $u_0(C_{(1,0)}) = B_1(0) \subset \R^3$, which corresponds to the upper half-sphere of $S^3$. Since we are only interested in studying the solution $u_0$ and small perturbations thereof in the backward light-cone $C_{(1,0)}$, the equation is well-posed when using stereographic projection as a chart on $S^3$. We will make repeated use of both the standard Cartesian coordinates $(t,x)$ and the self-similar coordinates $(\tau,y)$ on $\R^{1+3}$, while we use the stereographic projection from the south pole on the target manifold $S^3$ throughout the work.

\subsection{Mode stability}
Linearising the wave maps equation~\eqref{eq:wavemaps} around the solution $u_0$ yields
\begin{equation} \label{equ::linintro}
\Box \Psi + 2 \Gamma[u_0] Q_0(u_0,\Psi) + \nabla \Gamma[u_0] \cdot \Psi \: Q_0(u_0,u_0) = 0
\end{equation}
which we consider in self-similar coordinates and the stereographic projection \eqref{eq:wm}. In the present work, we consider the problem of establishing the mode stability of the blow-up solution $u_0$. The concept of mode solutions is introduced in Definition~\ref{def:modesoln}.

\begin{definition} \label{def:modesoln}
We call $\Psi: B_1(0) \rightarrow \R^3$ a mode solution to~\eqref{equ::linintro} with growth rate $\lambda \in \C$ if $\Psi \in C^{\infty}(\overline{B}_1(0))\setminus\{0\}$ and $e^{\lambda \tau} \Psi(y)$ solves equation~\eqref{equ::linintro} in self-similar coordinates.
\end{definition}

We are now ready to state our main Theorem~\ref{maintheorem}, which guarantees the mode stability of the blow-up solution $u_0$ modulo the symmetry group of the wave maps equation. Theorem~\ref{maintheorem} precisely characterizes all mode solutions with growth rates $\Re \lambda \geq 0$.

\begin{theorem} \label{maintheorem}
Assume $\Re \lambda \geq 0$ and $\Psi$ is a mode solution as in Definition~\ref{def:modesoln}. Then $\lambda \in \{0,1\}$. The space of mode solutions with growth rate $\lambda=1$ is four-dimensional and spanned by the functions $\{ \Psi_{1,0}, \Psi_{0,1}^1, \Psi_{0,1}^2, \Psi_{0,1}^3 \}$, where
\begin{equation}
\Psi_{1,0}(y)=y, \quad \Psi_{0,1}^i(y) = e_i, \; i=1,2,3
\end{equation}
where $e_i$ denote the standard Euclidean basis of $\R^3$. The space of mode solutions with growth rate $\lambda=0$ is nine-dimensional and spanned by $\{ \Phi_{0,1}^i, \Psi_{1,1}^i,\Psi_{2,1}^i \mid i=1,2,3 \}$, where
\begin{equation}
\begin{gathered}
\Phi_{0,1}^i(y)= (\| y \|^2-3) e_i, \; i=1,2,3 \\[0.2cm]
\Psi_{1,1}^1(y)=\left( \begin{matrix} 0 \\ -y_3 \\ y_2 \end{matrix} \right), \quad \Psi_{1,1}^2(y)=\left( \begin{matrix} y_3 \\ 0 \\ -y_1 \end{matrix} \right), \quad \Psi_{1,1}^3(y)=\left( \begin{matrix} -y_2 \\ y_1 \\ 0 \end{matrix} \right) \\[0.2cm]
\Psi_{2,1}^1(y)=\left( \begin{matrix} -2y_1^2+y_2^2+y_3^2 \\ -3y_1 y_2 \\ -3 y_1 y_3 \end{matrix} \right), \; \Psi_{2,1}^2(y)=\left( \begin{matrix} -3y_1 y_2 \\ y_1^2 -2y_2^2 + y_3^2 \\ -3 y_2 y_3 \end{matrix} \right), \\
\Psi_{2,1}^3(y)=\left( \begin{matrix} -3 y_1 y_3 \\ -3y_2 y_3 \\ y_1^2 + y_2^2 - 2 y_3^2 \end{matrix} \right).
\end{gathered}
\end{equation}
All basis functions are expressed in self-similar coordinates as defined in equation~\eqref{selfsimcoordintro}.
\end{theorem}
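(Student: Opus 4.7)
The plan is to exploit the $SO(3)$ symmetry contained in the stabiliser of $u_0$, which acts on mode solutions by $\Psi(y) \mapsto R\Psi(R^T y)$ and commutes with the linearised operator in~\eqref{equ::linintro}. I would decompose the natural $SO(3)$-representation on $C^\infty(\overline{B}_1(0);\mathbb{R}^3)$ into isotypic components via the Clebsch--Gordan rule $V_\ell \otimes V_1 = V_{|\ell-1|} \oplus V_\ell \oplus V_{\ell+1}$, so that every mode solution can be written as a sum over angular momentum sectors, where each sector of fixed $\ell$ is described by a small, fixed number of radial profile functions $f(r)$ of $r = \|y\|$ multiplied by explicit $SO(3)$-equivariant vector spherical harmonics. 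Equivariance then forces the linearised equation to decouple across sectors, reducing the classification of mode solutions to solving, for each $\ell$, a coupled linear system of second-order ODEs on $r \in (0,1)$ depending parametrically on $\lambda$.

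The resulting ODE systems are Fuchsian with regular singular points at $r=0$ (from spherical coordinates) and at $r=1$ (the boundary of the backward light cone, where the principal symbol of the wave operator degenerates). Smoothness of $\Psi$ at the origin restricts the radial profiles to a specific Frobenius branch determined by $\ell$, and smoothness at $r=1$ imposes a further $\lambda$-dependent Frobenius condition. A non-trivial smooth mode solution therefore corresponds to solving a connection problem: showing that the solution of the ODE system lying in the smooth Frobenius branch at $r=0$ also lies in the smooth branch at $r=1$.

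The main obstacle, occupying the bulk of the argument, is to establish that no such connection exists for $\Re\lambda \geq 0$ with $\lambda \notin \{0,1\}$. For this I would employ the quasi-solution method of Costin--Donninger--Glogi\'{c}: for each $\ell$ and $\lambda$ one constructs an explicit approximate solution, the quasi-solution, that is smooth at $r=0$ and satisfies the system up to a small remainder. A contraction argument in a weighted Banach space adapted to the radial problem then shows that the true smooth-at-zero solution is a controlled perturbation of the quasi-solution, so smoothness at $r=1$ is detected by an explicit analytic functional of the quasi-solution encoding the coefficient of the non-smooth Frobenius branch at $r=1$. The remaining task is to verify that this functional is non-zero for all $\lambda$ with $\Re\lambda \geq 0$ outside $\{0,1\}$; since $\ell$ enters the coefficients polynomially, a uniform argument would cover all sufficiently large $\ell$, leaving only finitely many low-$\ell$ sectors to be treated individually. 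This is the delicate, case-dependent step where the structure of the wave maps nonlinearity enters in an essential way.

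Finally, for $\lambda \in \{0,1\}$, I would verify by direct substitution that the explicit functions in the theorem solve~\eqref{equ::linintro}, and identify them with generators of the $13$-parameter quotient of the full $17$-parameter symmetry group by the $4$-dimensional stabiliser of $u_0$: the modes $\Psi_{1,0}$ and $\Psi_{0,1}^i$ at $\lambda=1$ arise from time and space translations in the original coordinates $(t,x)$, while $\Psi_{1,1}^i$, $\Phi_{0,1}^i$, and $\Psi_{2,1}^i$ at $\lambda=0$ arise from target $SO(3)$ rotations, Lorentz boosts, and the remaining three generators of the quotient. The dimension count $4+9 = 13$ matches the quotient exactly and confirms, together with the ODE analysis above, that no further smooth modes appear.
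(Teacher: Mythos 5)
Your high-level outline aligns with the paper's strategy, but there are two substantive gaps in the route you sketch. First, the decomposition step is weaker than what the paper actually proves. You expect to land on ``for each $\ell$, a coupled linear system of second-order ODEs,'' but the crucial point of Lemma~\ref{lem:decomp} and Lemma~\ref{lem:decoupling} is that the operator $\mathfrak{C} = -\Delta_{S^2} + 2 + 2\mathfrak{M}$ (the Casimir of the full representation $\pi$) is also diagonal on each $W_{l,m}$, so the first-order angular terms $2\Gamma(y)Q_0(y,\Psi)$ in the linearised equation act as scalar multiplication there. Consequently one gets a \emph{single scalar} ODE \eqref{equ::modeequation} for each pair $(l,m)$, not a coupled triple for fixed $l$. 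This full decoupling is what makes the Frobenius/quasi-solution analysis tractable; with a genuinely coupled $3\times 3$ system the recurrence-relation machinery would not apply in the same form.

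Second, and more seriously, you jump from the ODE formulation directly to ``no connection exists for $\lambda\notin\{0,1\}$,'' but the paper must first perform the supersymmetric (SUSY) transformation of Section~\ref{sec:removingsols} precisely because for $(l,m)\in\{(0,1),(1,0),(1,1),(2,1)\}$ the untransformed ODE \eqref{equ::modeequation} \emph{does} possess smooth solutions at $\lambda=0$ or $\lambda=1$. The quasi-solution method as implemented here — building the quasi-solution $\tilde r_n$ to the \emph{ratio recurrence} \eqref{equ::recurrencern}, invoking Poincar\'e's theorem to pin the limit to $\{\tfrac12,1\}$, and bounding the relative error $e_n$ — yields the blanket conclusion that the convergence radius is $1$ and hence that \emph{no} smooth solution exists. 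That conclusion would be false if applied to the untransformed equation in the symmetry sectors. So the SUSY conjugation, together with the explicit characterisation of its kernel in Lemma~\ref{lem:susytransform}, is not a cosmetic simplification but a prerequisite for running the quasi-solution argument at all. Your description of the quasi-solution method (approximate solution to the ODE plus contraction in a weighted Banach space, detecting non-smoothness through an analytic functional) is also a different mechanism from what the paper does; the paper works entirely at the level of power-series coefficients and their ratios, and the error control is an explicit inductive bound \eqref{equ:anbn}--\eqref{equ:anbn2}, not a fixed-point argument. These points would need to be supplied before the proposal constitutes a proof.
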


\begin{remark}
The key steps in the proof of Theorem~\ref{maintheorem} may be summarised as:
\begin{itemize}
    \item The introduction of an appropriate basis, which allows one to decouple the linearised wave maps equation~\eqref{equ::linintro} into a system of ordinary differential equations in such a way that the effect of symmetries is separated out, while still remaining tractable. The decomposition is obtained by carefully analysing the action of the symmetry group on the blow-up solution and making use of the representation theory of $\text{so}(3)$. The decomposition and the resulting system of decoupled ODEs is introduced in Section~\ref{sec:symmetries}.
    \item The use of the supersymmetric (SUSY) transformation, in order to `remove' the mode solutions generated by the action of the symmetry group. This transformation is carried out in Section~\ref{sec:removingsols}. The kernel of the transform is characterised, so that if the transformed system of ODEs possesses only the zero solution, the original system of ODEs possesses only the mode solutions generated by the symmetry group.
    \item The use of the \emph{quasi-solution method} to prove that the system of ODEs resulting from the SUSY transformation possesses only the zero solution. The quasi-solution method was originally introduced by~\citet{glogic2}. We first reduce the resulting ODEs to Heun or hypergeometric standard form in Section~\ref{sec:heunform} and then prove the absence of non-zero solutions in Sections~\ref{sec:modestability_simplecase} and~\ref{sec:proofs}.
\end{itemize}

\end{remark}

\begin{remark}
In Lemma~\ref{sec:modesolutionslemma} we make the link between the mode solutions from Theorem~\ref{maintheorem} and the action of the Lie algebra of the symmetry group on the linearised wave maps equation. In particular, we show that the space of modes for $\lambda = 1$ is generated by the spacetime translations, while the space of modes for $\lambda = 0$ is generated by rotations on $S^3$ and the Lorentz boosts. We also note that while the indexing of the basis functions in Theorem~\ref{maintheorem} may seem slightly cumbersome at first sight, this is done to facilitate comparison with the decomposition introduced in Section~\ref{sec:symmetries}.
\end{remark}

\subsection{Related work}

\subsubsection{The co-rotational case}
A commonly studied subclass of solutions is given by the co-rotational solutions. When using stereographic projection from the sphere, the co-rotational solutions are most naturally represented in the form
\begin{equation} \label{eqn:corotational}
    u(t,x)= \mathrm{tan} \left( \frac{\varphi(t,r)}{2} \right) \frac{x}{r},
\end{equation}
where $r=\|x \|$ and $\varphi(t,r)$ is a smooth function of time and radius. We note carefully that the blow-up solution $u_0$ is co-rotational, and may be expressed in the form~\eqref{eqn:corotational} if we set $\varphi(t,r) = \varphi_0(t,r) = 2 \arctan \left( \frac{r}{1-t} \right)$. For co-rotational solutions, the wave maps equation reduces to the radial semi-linear wave equation given by
\begin{equation} \label{wmcor}
\varphi_{tt} -\varphi_{rr} -\frac{2}{r} \varphi_r + \frac{\mathrm{sin}(2\varphi)}{r^2} =0 .
\end{equation}
Under the assumption of co-rotationality, the symmetry group of the wave maps equation reduces to only the time shift symmetry, which simplifies the analysis of equation~\eqref{wmcor}.

In the co-rotational case, mode stability has been shown by Costin, Donninger and Glogi\'{c} using the quasi-solution method~\cite{glogic2}, see also~\cite[Section 2.7]{glogicthesis} and \cite{Don24}. Donninger, Schörkhuber and Aichelburg~\cite{donn} demonstrated that mode stability ensures linear stability, and Donninger~\cite{donn2} proved nonlinear stability in the co-rotational case. Consequently, the co-rotational case is fully understood, and the link between mode stability and both linear and nonlinear stability in this case suggests that a similar approach may apply when the assumption of co-rotationality is removed. We shall recover the co-rotational situation as a special case in the course of our analysis, see Remark~\ref{rem:corotational_case}.

\subsubsection{Nonlinear stability}
Based on the nonlinear stability result \cite{donn2}, there was a lot of progress on self-similar blow-up for wave equations in recent years. First of all, the stability theory of \cite{donn, donn2} was generalized to related models like the scalar wave equation \cite{DonSch12, DonSch14, DonSch17}, wave maps in higher dimensions \cite{ChaDon17, DonGlo19, Glo25}, Yang-Mills equations \cite{Don14, CosDonGloHua16, Glo22, Glo24}, and Skyrmions \cite{McN20, CheMcNSch23, McN24} under suitable symmetry reductions. Furthermore, for the scalar wave equation, the nonlinear stability of self-similar blow-up is also understood without symmetry assumptions \cite{MerZaa07, DonSch16, MerZaa16, ChaDon19, GloSch21, CsoGloSch24, Ost24}. We remark that for the problem at hand, Theorem \ref{maintheorem} is the crucial stepping stone for proving nonlinear stability without symmetry assumptions along the lines of \cite{DonSch16}. This will be pursued elsewhere. In addition to these rigorous works, there is a number of very influential papers that employ numerical or mixed analytical and numerical techniques, see e.g.~\cite{BizTab01, Biz02, BizChmTab04, Biz05} and the aforementioned \cite{bizon}.

Another line of research was started in \cite{Don17} and concerns the question of \emph{optimal blow-up stability} where one studies the stability of blow-up under perturbations whose smallness is measured in the critical Sobolev norm. This encompasses the largest possible set of perturbations that is still admissible by the local well-posedness theory of the equation, see below. Results of this type exist for scalar wave equations \cite{Don17, DonRao19, Wal23} and co-rotational wave maps \cite{DonWal23, DonWal25}. 

In \cite{BieDonSch21, DonOst23, DonOst24, CheDonGloMcNSch24}, more general coordinate systems were introduced to study self-similar blow-up  and new types of stability results were established that yield information also after the blow-up. This touches upon the intriguing question of continuation beyond the blow-up which is still wide open. Another active direction, which was pioneered in \cite{Bri20}, is the study of blow-up stability under randomized perturbations.

Finally, in a slightly broader context, there is a sizeable literature on the construction and stability of \emph{non-self-similar} blow-up for wave equations, see e.g.~\cite{KriSchTat08, KriSchTat09, KriSchTat09a, RodSte10, RapRod12, HilRap12, Jen17, GhoIbrNgu18, Col18, KriMia20, BurKri22, JenKri25}. We particularly point out the work \cite{KriMiaSch24} which is more closely related to the present paper in that it proves a nonlinear stability result of non-self-similar wave maps blow-up without symmetry assumptions.

\subsubsection{Well-posedness and regularity}
The scaling symmetry $u(t,x) \mapsto u(\mu t, \mu x)$, $\mu >0$ of the wave maps equation~\eqref{eq:wavemaps} implies that $s_c= \frac{n}{2}$ is the critical regularity, in the sense that the $\dot{H}^{s_c}$ seminorm is preserved by scaling. \citet{klainermanselberg1} and~\citet{klainermanmachedon,klainerman1,klainerman2} have shown local well-posedness of the intrinsic form of the wave maps equation for initial data in $H^s \times H^{s-1}(\R^{n})$, with $s>\frac{n}{2}$ and $n \geq 2$. \citet{tataru1,tataru2} has shown well-posedness in the homogeneous critical $1$-Besov space $\dot{B}^{\frac{n}{2}}_{2,1}$ for all $n \geq 2$. The proof of well-posedness in the critical homogeneous Sobolev space $\dot{H}^{\frac{n}{2}} \times \dot{H}^{\frac{n}{2}-1}$ goes back among others to~\citet{tao1,tao2}, \citet{klainrod}, \citet{nahmod2001well}, \citet{struwe}, \citet{krieger1,krieger2} and ~\citet{tataru3}. For a comprehensive review we refer the reader to the article by~\citet{krieger}.

\subsection{Overview}
In Section~\ref{sec:symmetries} we introduce the full symmetry group of the wave maps equation and discuss how to decouple the linearised wave maps equation~\eqref{equ::linintro} into a system of ordinary differential equations (ODEs) in a symmetry-equivariant way. In addition, we show how the symmetry group generates a set of mode solutions to the linearised equation and compute these solutions explicitly. In Section~\ref{sec:removingsols}, we then discuss how to `remove' the mode solutions generated by the symmetry group in order to simplify the subsequent analysis. In Section~\ref{sec:heunform}, we show that the resulting ODEs may be transformed into a Heun-type equation in all but one cases, where the equation reduces to the simpler hypergeometric equation. The simpler hypergeometric case is treated in Section~\ref{sec:modestability_simplecase}. The remaining cases are more involved and make use of the quasi-solution method, which is elaborated in Section~\ref{sec:proofs}, concluding  the proof of Theorem~\ref{maintheorem}.

\subsection{Acknowledgments}
This research was funded in whole or in part by the Austrian Science Fund (FWF) 10.55776/P34560. H.K. was partially supported by the Deutsche Forschungsgemeinschaft (DFG, German Research Foundation) through the Hausdorff Center
for Mathematics under Germany’s Excellence Strategy - EXC-2047/1 - 390685813
and through CRC 1060 - project number 211504053. For open access purposes, the authors have applied a CC BY public copyright license to any author-accepted manuscript version arising from this submission.

\section{Symmetries and decoupling the equation}
\label{sec:symmetries} 
The group of symmetries of the wave maps equation is generated by the symmetries of the wave equation (the Poincar\'e group of dimension 10, the semi-direct product of 4 dimensional space-time translations and the Lorentz group $O(3,1)$ of dimension $6$), the isometries of the target space $S^3$ (the orthogonal group $O(4)$ of dimension $6$) and an additional scaling symmetry of the wave maps equation. The stabilizer of the blow-up solution $u_0$ is the commutative product of group $O(3)$  which acts by 
\[  u \mapsto     Ru(t, R^T x) \]
for $ R \in O(3)$ and the scaling group $(\R^+,*)$. The stabilizer  commutes with  the linearisation at $u_0$. 
We diagonalize with respect to the stabilizer: 
Diagonalization with respect to scaling leads to a three dimensional PDE in space, and after  diagonalization of the action of $SO(3)$
we obtain ordinary differential equations with respect to the radial variable for the modes.

When applied to the blow-up solution $u_0$, the symmetry group generates mode solutions of the linearised equation~\eqref{equ::linintro}. The dimension of the modes ($13$, $4$ unstable and $9$ neutral) generated by the symmetry is the dimension of the symmetry group 17 minus the dimension of the stabilizer 4. 
It is therefore crucial that we make an effort to decouple the linearised equation in such a way that allows us to separate off the mode solutions generated by symmetries. The key insight in this work is that by embedding into the larger function space $L^2(\R^{1+3}; \C^3)$, one may leverage the representation theory of the Lie algebra $so(3)$ to compute a decomposition of the solution space which is equivariant under the rotational symmetries of the linearised wave maps equation. This decomposition can furthermore be computed explicitly by using Clebsch--Gordan coefficients, which are traditionally used in angular momentum coupling in quantum mechanics. This equivariant decomposition may then be used to decouple the linearised equation~\eqref{equ::linintro} into a system of infinitely many ordinary differential equations, whose analysis will be the subject of the remaining chapters of this work.

In Section~\ref{sec:symmetries_and_modes} we discuss in some detail the symmetry group and the mode solutions generated by symmetries. In Section~\ref{sec:clebschgordan} we introduce the decomposition of the function space $L^2(\R^{1+3}; \C^3)$ which is equivariant under the symmetry group of the linearised wave maps equation. In Section~\ref{sec:decoupling} we then decouple the linearised wave maps equation using the decomposition obtained in Section~\ref{sec:clebschgordan}.

\subsection{Mode solutions generated by symmetries} \label{sec:symmetries_and_modes}
The symmetries of the wave maps equation generate mode solutions of the linearised wave maps equation. The reader may readily verify that if $u: \R^{1+3} \rightarrow \R^3$ denotes a solution to the wave maps equation, the following functions are again solutions:
\begin{align}
\text{Time translations} \qquad & \;  u(t+\alpha,x) \label{eq:translation} \\
\text{Spatial translations} \qquad & \; u(t,x+ \alpha e_i), \quad i=1,2, 3 \\
\text{Scaling} \qquad & \; u(1+\alpha (t-1), \alpha x) \quad (\alpha >0) \\
\text{Rotations} \qquad & \; R_i u(t,R_i^T(\alpha) x), \quad i=1,2, 3 \label{eq:rotdomain} \\
\text{Lorentz boosts} \qquad & \; u( e_0 +\Lambda_i(\alpha) (X-e_0)), \quad i=1,2,3 \label{eq:boost} \\
\text{Rotations on the sphere} \qquad & \; \kappa \mathbf{R}_j(\alpha) \kappa^{-1} u(t,x), \quad j=1 \dots 6.  \label{equ::rotationsonsphere}
\end{align}
Here $\alpha \in \R$ is a parameter, $e_{i}$ denote the standard unit vectors in $\R^3$ for $i=1,2,3$, $R_i(\alpha)=\exp(\alpha F_i)$ where $F_i$ generate the Lie algebra $\text{so}(3)$, $\mathbf{R}_j(\alpha) = \exp{(\alpha \mathbf{F}_j)}$, where $\mathbf{F}_j$ generate the Lie algebra $\text{so}(4)$ and $\Lambda_i(\alpha)$ are the Lorentz boosts along the $e_i$-axes with rapidity $\alpha$, which we define with respect to $e_0 = (1,0,0,0) \in \R^{1+3}$. Here we denote $X=(t,x) \in \R^{1+3}$ and $\kappa$ denotes the stereographic projection from the south pole of $S^3$. For completeness, the explicit forms of the generators of $\text{so}(3)$ and $\text{so}(4)$ as well as the Lorentz boosts are given in Appendix~\ref{appendix::matrices}. By taking a derivative in the parameter $\alpha$, one may obtain a solution to the linearised equation. We formalise this in Lemma~\ref{sec:modesolutionslemma}.

\begin{lemma} \label{sec:modesolutionslemma}
The space-time translations~\eqref{eq:translation} generate four smooth linearly independent mode solutions for $\lambda=1$ given in self-similar coordinates by
\begin{equation}
\Psi_{1,0}(y)=y, \quad \Psi_{0,1}^i(y) = e_i, \; i=1,2,3.
\end{equation}
The Lorentz boost~\eqref{eq:boost} and the rotations on the sphere~\eqref{equ::rotationsonsphere} 
generate  nine smooth linearly independent mode solutions for $\lambda=0$ given by
\begin{equation}
\begin{gathered}
\Phi_{0,1}^i(y)= (r^2-3) e_i , \quad \Psi_{1,1}^i(y)= F_i \, y \qquad i=1,2,3 \\[0.2cm]
\Psi_{2,1}^1(y)=\left( \begin{matrix} -2y_1^2+y_2^2+y_3^2 \\ -3y_1 y_2 \\ -3 y_1 y_3 \end{matrix} \right), \; \Psi_{2,1}^2(y)=\left( \begin{matrix} -3y_1 y_2 \\ y_1^2 -2y_2^2 + y_3^2 \\ -3 y_2 y_3 \end{matrix} \right), \\
\Psi_{2,1}^3(y)=\left( \begin{matrix} -3 y_1 y_3 \\ -3y_2 y_3 \\ y_1^2 + y_2^2 - 2 y_3^2 \end{matrix} \right).
\end{gathered}
\end{equation}
\end{lemma}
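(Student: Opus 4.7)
The plan is to exploit the general principle that if $\{\Phi_\alpha\}_{\alpha\in\R}$ is a one-parameter family of symmetries of the nonlinear wave maps equation with $\Phi_0=\mathrm{id}$, then $\partial_\alpha\Phi_\alpha(u_0)|_{\alpha=0}$ solves the linearised equation~\eqref{equ::linintro}. For each symmetry~\eqref{eq:translation}--\eqref{equ::rotationsonsphere} I would evaluate the action on $u_0(t,x)=x/(1-t)$, differentiate in $\alpha$ at $\alpha=0$, and convert to self-similar coordinates via $y=x/(1-t)$, $\tau=-\log(1-t)$, in order to identify the factorisation $e^{\lambda\tau}\Psi(y)$. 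Smoothness on $\overline{B}_1(0)$ is automatic because every candidate turns out to be a polynomial in $y$. Scaling and domain rotations lie in the stabiliser of $u_0$ and produce the zero derivative, so they may be discarded immediately.

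The $\lambda=1$ modes are read off by inspection. Time translation yields $\partial_\alpha[x/(1-t-\alpha)]_{\alpha=0}=x/(1-t)^2=e^\tau y$, giving $\Psi_{1,0}$, while spatial translation along $e_i$ yields $e_i/(1-t)=e^\tau e_i$, giving $\Psi_{0,1}^i$. These four polynomials (one linear, three constant) are manifestly linearly independent.

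For the $\lambda=0$ modes I would treat the three remaining symmetry classes separately. The three rotations of $S^3$ fixing the south pole descend via stereographic projection to the standard $SO(3)$ action on $\R^3$, so $\kappa\mathbf{R}_j(\alpha)\kappa^{-1}u_0=R_j(\alpha) u_0$; differentiating and using $u_0=y$ produces $F_j y=\Psi_{1,1}^j$. For a Lorentz boost along $e_i$, the quotient rule applied to $u_0(e_0+\Lambda_i(\alpha)(X-e_0))$ gives, after substituting $x=(1-t)y$, the $\tau$-independent profile $y_i y-e_i$, which a short algebraic check identifies with $\tfrac{1}{3}(\Phi_{0,1}^i-\Psi_{2,1}^i)$. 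For the three remaining $SO(4)$ generators (rotations in each $(z_i,z_4)$-plane of $S^3$), inserting the explicit stereographic formula $\kappa^{-1}(u)=(2u,1-\|u\|^2)/(1+\|u\|^2)$ and differentiating yields $\tfrac{1}{2}((\|u\|^2-1)e_i-2u_iu)$; evaluating at $u=y$ gives a $\tau$-independent quadratic which I would check equals $\tfrac{1}{6}\Phi_{0,1}^i+\tfrac{1}{3}\Psi_{2,1}^i$.

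The final step is bookkeeping: the six profiles produced by the boosts and the non-south-pole-fixing rotations span the same six-dimensional space as $\{\Phi_{0,1}^i,\Psi_{2,1}^i\}_{i=1,2,3}$, since for each fixed $i$ the pair of linear combinations above is nondegenerate and recovers $\Phi_{0,1}^i$ and $\Psi_{2,1}^i$ individually. Together with the three $\Psi_{1,1}^j$ arising from sphere rotations fixing the south pole, this exhibits the full nine-dimensional family of $\lambda=0$ modes and completes the proof. The main obstacle is purely computational: writing down the rotations in the $(z_i,z_4)$-planes through the stereographic chart cleanly enough to identify the correct rational combinations with the neatly indexed basis in the statement; once that computation is done, linear independence and smoothness are immediate.
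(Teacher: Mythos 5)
Your proposal is correct and follows the paper's proof essentially step for step: differentiate each one-parameter symmetry on $u_0$ at the identity, discard the stabiliser directions (scaling, domain rotations), convert to self-similar coordinates to read off $\lambda$, and observe that the boost and $(z_i,z_4)$-rotation profiles span $\{\Phi_{0,1}^i,\Psi_{2,1}^i\}$ for each $i$. The only deviations are immaterial overall signs and normalisations in the boost and $(z_i,z_4)$-rotation derivatives, and your invertible change of basis is precisely (the inverse of) the one the paper writes as $\Psi_{2,1}^i=\tilde\Psi_1^i+\tilde\Psi_2^i$, $\Phi_{0,1}^i=-2\tilde\Psi_1^i+\tilde\Psi_2^i$.
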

\begin{proof}
We compute the effect of each symmetry on the blow-up solution and compute the corresponding linear solution by taking the derivative with respect to the symmetry parameter. \\

\noindent
\textbf{Scaling.}
The scaling group is in the stabilizer group of the self-similar solution and does not create mode solutions. \\

\noindent
\textbf{Spacetime translations.} Consider the one-parameter family given by time translation, $u_{\alpha}(t,x)=u_0(t+\alpha,x)= \frac{x}{1-(t+\alpha)}$. We find
\begin{equation}
\partial_{\alpha}|_{\alpha=0} \, u_{\alpha} = \frac{x}{(1-t)^2}=e^{\tau} y.
\end{equation}
For the one-parameter families generated by the spatial translations $u^i_{\alpha}(t,x)=u_0(t,x+\alpha e_i)$ we find the three mode solutions
\begin{equation}
\partial_{\alpha}|_{\alpha=0} \, u^i_{\alpha} = \frac{e_i}{1-t} = e^{\tau} e_i, \quad i=1,2,3.
\end{equation}
We define $\Psi_{0,1}^i(y) := e_i$ for $i=1,2,3$ and $\Psi_{1,0}(y) = y$. \\

\noindent
\textbf{Rotations in $\R^3$.} By our definition rotations in $\R^3$ are in the stabilizer group of the selfsimilar solution and they do not create mode solutions. \\  


\noindent
\textbf{Lorentz boosts.} Consider the three one-parameter families generated by the Lorentz boosts $\Lambda_i$ and given by $u^i_{\alpha}(t,x)=u_0((1,0,0,0) + \Lambda_i(\alpha) (t-1,x))$, $i=1,2,3$. After some computation one finds
\begin{equation}
\partial_{\alpha}|_{\alpha=0} \, u^i_{\alpha} = e_i - y_i y =: \tilde{\Psi}_1^i(y), \quad i=1,2,3.
\end{equation}

\noindent
\textbf{Rotations on the sphere $S^3$.}
Consider the six one-parameter families generated by the rotations on the sphere as follows: $u^j_{\alpha}(t,x)=\kappa \mathbf{R}_j(\alpha) \kappa^{-1} u_0(t,x)$ for $j=1,\dots 6$. We compute the partial derivative by $\alpha$:
\begin{equation}
\partial_{\alpha}|_{\alpha=0} \, u^j_{\alpha} = D \kappa|_{\kappa^{-1}(u_0)} \; \partial_{\alpha}|_{\alpha=0} \, \mathbf{R}_j(\alpha) \; \kappa^{-1} u_0
\end{equation}
where $\partial_{\alpha}|_{\alpha=0} \, \mathbf{R}_j(\alpha)= \partial_{\alpha}|_{\alpha=0} \, \exp(\alpha \mathbf{F}_j)=\mathbf{F}_j$ and a computation gives
\begin{equation}
D \kappa|_{\kappa^{-1}(u)} = (1+|u|^2) \left(
\begin{array}{cccc}
 \frac{1}{2} & 0 & 0 & -\frac{1}{2} u_1 \\
 0 & \frac{1}{2} & 0 & -\frac{1}{2} u_2 \\
 0 & 0 & \frac{1}{2} & -\frac{1}{2} u_3 \\
\end{array}
\right)
\end{equation}
Putting everything together, we find that for $j=1,2,3$ and $\mathbf{F}_j \in so(3)$ with a slight abuse of notation we obtain 
\begin{equation}
\partial_{\alpha}|_{\alpha=0} \, u^j_{\alpha} = \frac{\mathbf{F}_j \, x}{1-t} = \mathbf{F}_j \, y =: \Psi_{1,1}^j(y), \quad j=1,2,3,
\end{equation}
and for $j=4,5,6$ we obtain 
\begin{gather}
\tilde{\Psi}_2^1(y):=\partial_{\alpha}|_{\alpha=0} \, u^4_{\alpha} = \left( \begin{matrix} -y_1^2+y_2^2+y_3^2-1 \\ -2y_1 y_2 \\ -2 y_1 y_3 \end{matrix} \right), \\
\tilde{\Psi}_2^2(y):= \partial_{\alpha}|_{\alpha=0} \, u^5_{\alpha} = \left( \begin{matrix} -2y_1 y_2 \\ y_1^2-y_2^2+y_3^2-1 \\ -2y_2 y_3 \end{matrix} \right), \\
\tilde{\Psi}_2^3(y):= \partial_{\alpha}|_{\alpha=0} \, u^6_{\alpha} = \left( \begin{matrix} -2y_1 y_3 \\ -2y_2 y_3 \\ y_1^2+y_2^2-y_3^2-1 \end{matrix} \right).
\end{gather}
For later convenience, we make a change of basis for the eigenfunctions for the growth rate $\lambda=0$ generated by the Lorentz boosts and the rotations on the sphere. This will be convenient in Section~\ref{sec:clebschgordan}. For $i=1,2,3$, we define
\begin{gather}
\Psi^i_{2,1} :=  \tilde{\Psi}_1^i+\tilde{\Psi}_2^i, \\
\Phi_{0,1}^i :=  -2 \tilde{\Psi}_1^i+\tilde{\Psi}_2^i.
\end{gather}
A simple computation reveals that this yields the mode solutions in the formulation of Lemma~\ref{sec:modesolutionslemma}. It is readily verified that the solutions are linearly independent.
\end{proof}

\subsection{Clebsch--Gordan decomposition} \label{sec:clebschgordan}

In order to motivate the decomposition we use, consider the following transformation: Let $R \in \mathrm{SO}(3)$ be a rotation and let $u: \R^{1+3} \rightarrow \R^3$ be a smooth solution to the wave maps equation, expressed in Cartesian coordinates on $\R^{1+3}$ and using the stereographic projection from the south pole on $S^3$ as above. We then define
\begin{equation} \label{eqn:pi_transform_informal}
    u_R: \R^{1+3} \rightarrow \R^3, (t,x) \mapsto R u (t, R^T x).
\end{equation}
It may then be readily verified that $u_R$ is again a solution to the wave maps equation. Furthermore, it is immediately apparent that the blow-up solution $u_0$ is invariant under the transformation~\eqref{eqn:pi_transform_informal}. When viewed as a representation of the Lie group $\mathrm{SO}(3)$, this implies that the linearised wave maps equation around $u_0$ is invariant under the induced representation $\pi$ of the Lie algebra $\mathrm{so}(3)$. This invariance manifests itself in the fact that the angular derivatives in the linearised wave maps equation become diagonal when restricted to irreducible subspaces of the representation $\pi$. More precisely, we will show in Section~\ref{sec:decoupling} that the terms
containing angular derivatives  in the linearised wave map equation together act by multiplication when restricted in the angular variable to irreducible subspaces of the representation of so(3).

\begin{lemma} \label{lem:decomp}
Consider the densely defined operator $\mathfrak{M}: C^\infty(S^2; \C^3) \rightarrow L^2(S^2; \C^3)$
\begin{equation}
\mathfrak{M}= \sum_{i,j,k,a,b=1}^3 \epsilon_{ijk} \epsilon_{iab} x_j \partial_k E_{a,b},
\end{equation}
where $\epsilon_{ijk}$ denotes the Levi--Civita symbol, $E_{a,b}$ denotes the standard basis of the space of $3 \times 3$ matrices and the product $x_j \partial_k E_{a,b}$ is understood to be the matrix operator with entries $(x_j \partial_k E_{a,b})_{c,d} = \delta_{ac} \delta_{bd} x_j \partial_k$. Further define $\mathfrak{C}: C^\infty(S^2; \C^3) \rightarrow L^2(S^2; \C^3)$ by
\begin{equation}
    \mathfrak{C} = - \Delta_{S^2} + 2 +2 \mathfrak{M},
\end{equation}
where $\Delta_{S^2}$ denotes the (negative-definite) Laplace--Beltrami operator on $S^2$ acting component-wise. Then there exists a direct sum decomposition
\begin{equation} \label{eqn:L2decomp}
L^2(S^2;\C^3) =  W_{0,1} \oplus \bigoplus_{l\geq 1,|m-l| \leq 1} W_{l,m},
\end{equation}
such that each of the subspaces $W_{l,m}$ are mutually orthogonal in $L^2(S^2;\C^3)$ and every $\psi \in W_{l,m} \cap C^\infty(S^2; \C^3)$ satisfies
\begin{align}
    \Delta_{S^2} \psi &= -l(l+1) \psi, \\
    \mathfrak{C}  \psi &= m(m+1) \psi .
\end{align}
In other words, the decomposition~\eqref{eqn:L2decomp} diagonalises the operators $\Delta_{S^2}$ and $\mathfrak{C}$ jointly.
\end{lemma}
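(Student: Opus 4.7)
The plan is to realize the decomposition as the Clebsch--Gordan decomposition of $L^2(S^2;\C^3)\cong L^2(S^2)\otimes\C^3$ regarded as a representation of $\mathrm{SO}(3)$ under the diagonal action $(R\cdot\psi)(x)=R\psi(R^Tx)$, and to identify $\mathfrak{C}$ with the quadratic Casimir of this action. Under the standard spherical-harmonic expansion, the left factor decomposes orthogonally as a Hilbert space direct sum $L^2(S^2)=\bigoplus_{l\geq 0}\mathcal{H}_l$, where $\mathcal{H}_l$ carries the spin-$l$ irreducible $\mathrm{SO}(3)$-representation of dimension $2l+1$; the right factor $\C^3$ is the defining, spin-$1$ representation.

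Applying the Clebsch--Gordan formula $V_l\otimes V_1\cong V_{l-1}\oplus V_l\oplus V_{l+1}$ for $l\geq 1$ (and $V_0\otimes V_1\cong V_1$ for $l=0$) to each summand $\mathcal{H}_l\otimes\C^3$, I would define $W_{l,m}$ as the $V_m$-isotypic component of $\mathcal{H}_l\otimes\C^3$. This directly reproduces the index ranges in~\eqref{eqn:L2decomp}, and mutual orthogonality of the $W_{l,m}$ is automatic: the spherical-harmonic decomposition is orthogonal in $L^2(S^2)$, and within each $\mathcal{H}_l\otimes\C^3$ the distinct isotypic components are orthogonal because $\mathrm{SO}(3)$ is a compact group acting unitarily.

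For the eigenvalues, I would observe that both $\Delta_{S^2}$ and $\mathfrak{C}$ commute with the diagonal $\mathrm{SO}(3)$-action and hence act as scalars on each $W_{l,m}$ by Schur's lemma. The scalar for $\Delta_{S^2}$ is immediate: acting componentwise, it equals $-l(l+1)$ on $\mathcal{H}_l$. For $\mathfrak{C}$, the key step is to use the Levi--Civita contraction $\sum_i\epsilon_{ijk}\epsilon_{iab}=\delta_{ja}\delta_{kb}-\delta_{jb}\delta_{ka}$ to rewrite $\mathfrak{M}=\sum_i L_i F_i$, where $L_i=\epsilon_{ijk}x_j\partial_k$ are the orbital rotation generators on $L^2(S^2)$ (so that $\sum_iL_i^2=\Delta_{S^2}$) and the $F_i$ are the real antisymmetric matrix generators of the defining representation on $\C^3$ (with spin-$1$ Casimir $\sum_iF_i^2=-2$). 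The combined generators $\mathcal{J}_i=L_i+F_i$ of the diagonal action then satisfy $\sum_i\mathcal{J}_i^2=\Delta_{S^2}+2\mathfrak{M}-2$, and collecting terms shows that $\mathfrak{C}$ is, up to the sign fixed by the convention of antisymmetric generators, the total-angular-momentum Casimir $-\sum_i\mathcal{J}_i^2$. Since this Casimir acts as the scalar $m(m+1)$ on any copy of $V_m$, the eigenvalue $\mathfrak{C}\psi=m(m+1)\psi$ on $W_{l,m}$ follows.

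The main technical obstacle will be the sign and normalization bookkeeping in the Casimir identification: one must match the sign convention in the definition of $\mathfrak{M}$ and of $\sum_iL_i^2=\Delta_{S^2}$ with the choice of real antisymmetric (rather than Hermitian) generators, so that $\mathfrak{C}$ comes out as the positive Casimir with eigenvalue $m(m+1)$ rather than some other linear combination of $\Delta_{S^2}$ and $\mathfrak{M}$. Once these conventions are reconciled, the scalar action of $\mathfrak{C}$ on each $W_{l,m}$ and the value of that scalar follow from Schur's lemma and standard $\mathrm{SO}(3)$ representation theory; as an independent check, one can evaluate $\mathfrak{C}$ explicitly on a single representative in each $W_{l,m}$ constructed from spherical harmonics via Clebsch--Gordan coefficients, which fixes the eigenvalue unambiguously.
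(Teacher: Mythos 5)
Your proposal takes essentially the same route as the paper: identify $L^2(S^2;\C^3)\cong L^2(S^2)\otimes\C^3$ as a tensor product of the orbital representation and the spin-$1$ representation of $\mathrm{SO}(3)$, decompose each $\mathcal{H}_l\otimes\C^3$ by Clebsch--Gordan, and recognize $\mathfrak{C}$ as the total (diagonal) Casimir so that Schur's lemma pins down the scalar on each $W_{l,m}$. The orthogonality argument and the $\Delta_{S^2}$-eigenvalue are handled just as in the paper.

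The one place where your argument has a genuine gap is exactly the step you yourself flag. You write that $\mathfrak{C}$ equals $-\sum_i\mathcal{J}_i^2$ ``up to the sign fixed by the convention of antisymmetric generators,'' but this cannot be settled by a choice of convention. With $L_i=\sum_{j,k}\epsilon_{ijk}x_j\partial_k$, $(F_i)_{ab}=\epsilon_{iab}$ and $\mathfrak{M}=\sum_i L_iF_i$, one has $\sum_iL_i^2=\Delta_{S^2}$, $\sum_iF_i^2=-2$, and hence
\[
-\sum_i\mathcal{J}_i^2=-\sum_i(L_i+F_i)^2=-\Delta_{S^2}+2-2\mathfrak{M}.
\]
Replacing $F_i$ by $-F_i$ flips both $L_i$ and the matrix $F_i$, so the product $\mathfrak{M}=\sum_iL_iF_i$ is unchanged; the sign of the cross term $2\mathfrak{M}$ in the Casimir is therefore convention-independent and comes out as $-2\mathfrak{M}$, not $+2\mathfrak{M}$ as in the lemma's formula for $\mathfrak{C}$. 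The explicit sanity check you propose in fact exposes this: for $\psi(y)=y$, which generates $W_{1,0}$ and should satisfy $\mathfrak{C}\psi=0$, a direct computation gives $\Delta_{S^2}\psi=-2\psi$ and $\mathfrak{M}\psi=2\psi$, so $-\Delta_{S^2}\psi+2\psi-2\mathfrak{M}\psi=0$ while $-\Delta_{S^2}\psi+2\psi+2\mathfrak{M}\psi=8\psi$. In other words, carrying out the check you recommend does not merely ``fix a sign'' — it forces you to confront the sign of the $\mathfrak{M}$-term in the stated formula for $\mathfrak{C}$, and this is the part of the argument you have left undone. (The paper's own proof asserts the identity $C_\pi=-\Delta_{S^2}+2+2\mathfrak{M}$ directly without displaying the computation, so matching the two requires the same care.)
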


\begin{proof}
We begin the proof by recalling some elementary facts about the representation theory of $\mathrm{so}(3)$, see~\cite{halllie} and~\cite[Chapter 17]{hall2}. For every $k \in \mathbb{N}_0$, there exists an irreducible representation of $\mathrm{so}(3)$ of dimension $k$ which is unique up to isomorphism. Furthermore, this representation comes from a representation of the Lie group $\text{SO}(3)$ if and only if $k$ is odd. The Casimir element of a representation $\rho$ of $\mathrm{so}(3)$ is defined by $C_{\rho}=- \sum_{i=1}^3 \rho(F_i)^2$, where $F_i, i=1,2,3$ are a set of generators of $\mathrm{so}(3)$. The definition is independent of the choice of generators. Note our sign convention, which corresponds with the one typically used in the physics literature. If $\rho$ is irreducible, the Casimir element $C_\rho$ is diagonal, $C_{\rho}  = l(l+1) \, \id$, where $k=2l+1$.

Consider the unitary representation $\Pi$ of $\mathrm{SO}(3)$ on $L^2(S^2 ; \C^3)$ given by
\begin{equation} \label{eq:liegrouprep}
\begin{gathered}
    \Pi: \mathrm{SO}(3) \rightarrow U(L^2(S^2;\C^3)) \\
    R \mapsto (\Psi(y) \mapsto R \Psi(R^T y)),
\end{gathered}
\end{equation}
and its associated representation $\pi$ of the Lie algebra $\mathrm{so}(3)$. We aim to compute the Casimir operator $C_\pi$ and to find a decomposition of $L^2(S^2 ; \C^3)$ into irreducible subspaces. Since the representation of $\mathrm{so}(3)$  comes from the representation~\eqref{eq:liegrouprep} of $\mathrm{SO}(3)$, the resulting subspaces will have odd dimensions, see~\cite[Theorem 17.10]{hall2}. The Casimir operator is then guaranteed to be diagonal on each of these subspaces.

We begin by noting that under the standard identification of $L^2(S^2; \C^3)$ with $L^2(S^2;\C) \otimes \C^3$, the representation $\Pi$ may be identified with $\Pi = \Pi_1 \otimes \Pi_2$, where $\Pi_1$ and $\Pi_2$, are given by
\begin{gather}
\Pi_1: \mathrm{SO}(3) \rightarrow U(L^2(S^2;\C)) \\
R \mapsto (\psi(y) \mapsto \psi(R^T y)),
\end{gather}
respectively
\begin{gather}
\Pi_2: \mathrm{SO}(3) \rightarrow U(\C^3) \\
R \mapsto (z \mapsto R z).
\end{gather}
We denote the associated representations of the Lie algebra $\mathrm{so}(3)$ by $\pi_1$ respectively $\pi_2$. We note that the space $L^2(S^2; \C)$ may be decomposed into orthogonal irreducible subspaces spanned by spherical harmonics, $L^2(S^2 ; \C) = \bigoplus_{l=0}^{\infty} H_l$, where $H_l$ is spanned by the spherical harmonics of degree $l$. The Casimir operator of $\pi_1$ is then $C_{\pi_1}=- \Delta_{S^2}$ and $\Delta_{S^2} \Psi = - l(l+1) \Psi$ for all $\Psi \in H_l$. For $\Pi_2$, a quick computation shows that for any $F \in \mathrm{so}(3)$, $\pi_2(F)$ acts by multiplication by the matrix $F$ on $\C^3$. It immediately follows that $\pi_2$ is an irreducible representation of $\mathrm{so}(3)$ with dimension $3$, so that the Casimir operator satisfies $C_{\pi_2} z = 2 z$ for all $z \in \C^3$.

In order to compute the Casimir operator $C_\pi$ we note $\pi= \pi_1 \otimes \id + \id \otimes \pi_2$, which follows readily from $\Pi = \Pi_1 \otimes \Pi_2$. Therefore, 
\begin{equation}
C_{\pi}= C_{\pi_1} \otimes \id + \id \otimes C_{\pi_2} - 2 \sum_{i=1}^3 \pi_1(F_i) \otimes \pi_2(F_i),
\end{equation}
where $F_i, i=1,2,3$ are generators of $\mathrm{so}(3)$, given by
\begin{equation}
    F_i = \sum_{j,k=1}^3 \epsilon_{ijk} E_{j,k}.
\end{equation}
We therefore find that
\begin{equation}
    \pi_1(F_i) = \sum_{j,k=1}^3 \epsilon_{ijk} x_j \partial_k, \quad \pi_2(F_i) = F_i.
\end{equation}
A standard computation reveals that
\begin{align} \label{equ::casimir}
C_{\pi} &= - \Delta_{S^2} +2 +2  \mathfrak{M} = \mathfrak{C},
\end{align}
where $\mathfrak{M}$ is as in the statement of the Lemma.

In order to compute the desired decomposition of $L^2(S^2; \C^3)$ into irreducible subspaces, we use the decomposition of $\pi_1$ above to find $L^2(S^2; \C^3) = \bigoplus_{l=0}^{\infty} H_l \otimes \C^3$. Note that now $(\pi, H_l \otimes \C^3)$ is a tensor product of an irreducible representation of dimension $2l+1$ and of dimension $3$. By~\cite[Theorem C.1]{halllie} we therefore find that when $l>0$, we may decompose $H_l \otimes \C^3 =\bigoplus_{m=l-1}^{l+1} W_{l,m}$, a direct sum of three subspaces of $H_l \otimes \C^3$, each of which is irreducible under $\pi$ and such that $W_{l,m}$ has dimension $2m+1$. The subspace $W_{0,1} := H_0 \otimes \C^3$, which consists of the component-wise constant functions, is evidently irreducible for $\pi$. From the remark at the beginning of the proof it now follows that the Casimir operator $C_{\pi}$ acts as $C_{\pi}|_{W_{l,m}} = m(m+1) \, \id$. Furthermore, since each $W_{l,m} \subset H_l$, it also follows that $\Delta_{S^2}|_{W_{l,m}} = - l(l+1) \, \id$. This concludes the proof.
\end{proof}

In the proof of Lemma~\ref{lem:decomp}, borrowing notation from the proof, we established that $H_l \otimes \C^3$ is irreducible for the representation $\pi$ when $l=0$, and may be decomposed into a direct sum of irreducible subspaces when $l\geq 1$ as  $H_l \otimes \C^3 = \bigoplus_{|l-m| \leq 1} W_{l,m}$. Recall that $W_{0,1} := H_0 \otimes \C^3$ corresponds simply to the space of constant functions, so that a basis is given by the constant unit vectors. We will denote this basis by $Z^k_{0,1}(y) = e_k$. For $l \geq 1$, the direct sum decomposition can be made explicit by means of a Clebsch--Gordan basis. We compute this basis explicitly for the first few values of $l$ and $m$. This allows us to understand in which subspaces the mode solutions generated by symmetries lie. We use the notation $Z_{l,m}^k$, $l\geq 1, m \in \{ l-1,l,l+1 \}$ and $k=1, \dots 2m+1$ to denote the Clebsch--Gordan basis for the space $H_l \otimes \C^3 = W_{l,l-1} \oplus W_{l,l} \oplus W_{l,l+1}$. For fixed $l,m$, the functions $Z_{l,m}^k$, $k=1, \dots 2m+1$ form a basis of the subspace $W_{l,m}$. The basis functions may be computed by consulting a table of Clebsch--Gordan coefficients~\cite[Chapter 44]{clebschgordan}. Expressed in $(\tau,y)$ coordinates, the basis functions $Z^k_{l,m}$ are provided in the right-hand column of Table~\ref{table:clebschgordan}.

\newcommand{\tablespacing}{0.2cm}
\newcommand{\tablespacingbig}{0.6cm}
\renewcommand{\arraystretch}{1.1}
\begin{table}[ht]
	\caption{Clebsch--Gordan basis for the first few values of $(l,m)$. The basis functions were computed by consulting a table of Clebsch--Gordan coefficients~\cite{clebschgordan}. The functions are expressed in $(\tau, y) = (\tau, y_1, y_2, y_3)$ coordinates and $e_1, e_2, e_3$ denote the standard Euclidean basis vectors.}\label{table:clebschgordan}%
	\begin{tabular}{@{}lc@{}}
		\toprule
		$(l,m)$ & Clebsch--Gordan basis $Z^k_{l,m}$ \\
		\midrule
		$(0,1)$ & $e_1,e_2,e_3$ \\[\tablespacing]
            $(1,0)$ & $\frac{1}{|y|} y$ \\[\tablespacing]
            $(1,1)$ & $\frac{1}{|y|} \left( \begin{matrix} 0 \\ -y_3 \\ y_2 \end{matrix} \right),\frac{1}{|y|} \left( \begin{matrix} y_3 \\ 0 \\ -y_1 \end{matrix} \right),\frac{1}{|y|} \left( \begin{matrix} -y_2 \\ y_1 \\ 0 \end{matrix} \right)$  \\[\tablespacingbig]
            $(1,2)$ & $\frac{1}{|y|} \left( \begin{matrix} 0 \\ y_2 \\ -y_3 \end{matrix} \right),\frac{1}{|y|} \left( \begin{matrix} 0 \\ y_3 \\ y_2 \end{matrix} \right),\frac{1}{|y|} \left( \begin{matrix} y_3 \\ 0 \\ y_1 \end{matrix} \right), \frac{1}{|y|} \left( \begin{matrix} y_1 \\ -y_2 \\ 0 \end{matrix} \right) ,\frac{1}{|y|} \left( \begin{matrix} y_2 \\ y_1 \\ 0 \end{matrix} \right)$  \\[\tablespacingbig]
            $(2,1)$ & $\frac{1}{|y|^2} \left( \begin{matrix} -2y_1^2+y_2^2+y_3^2 \\ -3y_1 y_2 \\ -3 y_1 y_3 \end{matrix} \right), \frac{1}{|y|^2} \left( \begin{matrix} -3y_1 y_2 \\ y_1^2 -2y_2^2 + y_3^2 \\ -3 y_2 y_3 \end{matrix} \right), \frac{1}{|y|^2} \left( \begin{matrix} -3 y_1 y_3 \\ -3y_2 y_3 \\ y_1^2 + y_2^2 - 2 y_3^2 \end{matrix} \right) $ \\
		\botrule
	\end{tabular}
\end{table}
\renewcommand{\arraystretch}{1.0}

By comparison of Table~\ref{table:clebschgordan} and the mode solutions from Lemma~\ref{sec:modesolutionslemma}, we can identify in which subspace each of the solutions generated by symmetries lies. If we assume the convention that the functions in Table~\ref{table:clebschgordan} are listed such that in each row, the Clebsch--Gordan basis $Z^k_{l,m}$ is listed in order of increasing index $k$, and using the notation from Lemma~\ref{sec:modesolutionslemma}, then we find the following relationship
\begin{equation}
\Psi_{l,m}^k(y)  = f_{l,m}(r) \, Z^k_{l,m}(\theta,\phi),
\end{equation}
for $(l,m) \in \{(0,1),(1,0),(1,1),(2,1)\}$ and in addition,
\begin{equation}
\Phi_{0,1}^k(y) = g_{0,1}(r) \, Z^k_{0,1}(\theta,\phi),
\end{equation}
where we have defined the radial functions
\begin{gather}
f_{0,1}(r)=1, \quad g_{0,1}(r)=r^2-3, \quad f_{1,0}(r)=r, \quad f_{1,1}(r) =r, \quad  f_{2,1}(r)=r^2.
\end{gather}
We conclude that the subspaces $W_{1,1}$ and $W_{2,1}$ contribute to the mode space of the growth rate $\lambda=0$, while $W_{1,0}$ contributes to the solution space for the growth rate $\lambda=1$ and $W_{0,1}$ contributes to both.

\subsection{Decoupling the linearised equation} \label{sec:decoupling}
Using the basis $Z^k_{l,m}$ of $L^2(S^2; \C^3)$ obtained in Section~\ref{sec:clebschgordan}, any $\Psi \in C^{\infty}(\overline{B}_1(0); \R^3)$ may be decomposed as
\begin{equation} \label{eqn:basis_decomp}
\Psi(r,\theta,\phi) = \sum_{l \geq 1, |l-m| \, \leq 1} \sum_{k=1}^{2m+1} f^k_{l,m}(r) \, Z^k_{l,m}(\theta,\phi) + \sum_{k=1}^3 f^k_{0,1}(r) Z^k_{0,1}(\theta,\phi),
\end{equation}
where the sum is understood to converge in the Hilbert space $L^2(\overline{B}_1(0); \C^3)$, the radial functions are $f^k_{l,m} \in L^2([0,1] ; \C)$ and we use standard spherical coordinates $(r, \theta, \phi) \in [0,1] \times [0, \pi) \times [0, 2\pi)$ to represent a point $y \in \overline{B}_1(0)$. The following Lemma shows that this basis allows us to decouple the linearised wave maps equation.

\begin{lemma} \label{lem:decoupling}
Let $\Psi \in C^{\infty}(\overline{B}_1(0); \R^3)$ be a smooth mode solution of the wave maps equation with growth rate $\lambda \in \C$. Let $f^k_{l,m}: [0,1] \rightarrow \C$ and $f^k_0: [0,1] \rightarrow \C$ be the radial functions in the decomposition~\eqref{eqn:basis_decomp}. Then $f^k_{l,m} \in C^\infty([0,1]; \R)$ and for each $f^k_{l,m}$ the function $\varphi^k_{l,m} = \frac{1}{1+r^2} f^k_{l,m}$ solves the following ordinary differential equation
\begin{equation} \label{equ::modeequation}
(1-r^2) \partial_{rr} \varphi^k_{l,m} + \left( \frac{2}{r} - 2 (\lambda +1) r \right) \partial_r \varphi^k_{l,m} - (\lambda^2 + \lambda + V_{l,m}) \varphi^k_{l,m} = 0,
\end{equation}
where the potential $V_{l,m}$ is given by
\begin{equation} \label{eqn:vlm}
V_{l,m} = \frac{(4+2m(m+1)-l(l+1)) r^4 + (2m(m+1)-12) r^2 +l(l+1)}{r^2(1+r^2)^2}.
\end{equation}
\end{lemma}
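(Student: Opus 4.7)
The plan is to substitute the mode ansatz $e^{\lambda\tau}\Psi(y)$ into the linearised equation~\eqref{equ::linintro} written in self-similar coordinates with $u_0(y)=y$, expand every term explicitly, and then recognise the angular part as a combination of $\Delta_{S^2}$ and the operator $\mathfrak{M}$ from Lemma~\ref{lem:decomp}. Using the formula for $\Box$ in $(\tau,y)$-coordinates and $\partial_\tau\mapsto\lambda$, the wave operator contributes
\[
\Box(e^{\lambda\tau}\Psi)=e^{\lambda\tau}\!\left[-\lambda^2\Psi-\lambda\Psi-2\lambda r\partial_r\Psi-r^2\partial^2_{rr}\Psi-2r\partial_r\Psi+\Delta_y\Psi\right],
\]
and decomposing $\Delta_y=\partial^2_{rr}+\tfrac{2}{r}\partial_r+\tfrac{1}{r^2}\Delta_{S^2}$ isolates the purely radial piece together with a $\tfrac{1}{r^2}\Delta_{S^2}\Psi$ term that, by Lemma~\ref{lem:decomp}, acts as $-\tfrac{l(l+1)}{r^2}\id$ on every $W_{l,m}$.

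Next I would compute the nonlinear terms at $u_0=y$. A direct calculation using the null-form expression in self-similar coordinates gives $Q_0(y_i,y_j)=\delta_{ij}-y_iy_j$ and
\[
Q_0(y_i,e^{\lambda\tau}\Psi_j)=e^{\lambda\tau}\left[\partial_i\Psi_j-\lambda y_i\Psi_j-y_i\, r\partial_r\Psi_j\right].
\]
Plugging these together with the explicit expressions for $\Gamma[y]$ and $\nabla\Gamma[y]$ into the contributions $2\Gamma[u_0]Q_0(u_0,\Psi)$ and $\nabla\Gamma[u_0]\cdot\Psi\,Q_0(u_0,u_0)$ and collecting terms, one finds that the pieces involving the matrix structure of $\Psi$ together with its angular gradients reorganise into precisely the combination $-\Delta_{S^2}+2+2\mathfrak{M}=\mathfrak{C}$, multiplied by the conformal weight $(1+r^2)^{-2}$ and augmented by purely radial factors. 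This is the central algebraic identity that makes the decoupling work.

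At this point every operator in the equation commutes with the representation $\pi$, so the equation restricts to each irreducible subspace $W_{l,m}$, where $\Delta_{S^2}=-l(l+1)\id$ and $\mathfrak{C}=m(m+1)\id$ by Lemma~\ref{lem:decomp}. Writing the $W_{l,m}$-component of $\Psi$ as $f^k_{l,m}(r)Z^k_{l,m}(\theta,\phi)$ then reduces the PDE to a second-order ODE in $f^k_{l,m}$ whose coefficients depend only on $l$, $m$, $\lambda$ and $r$. The regularity $f^k_{l,m}\in C^\infty([0,1];\R)$ is inherited from $\Psi\in C^\infty(\overline{B}_1(0))$ via continuity of the orthogonal projection onto the finite-dimensional smooth subspaces $W_{l,m}$. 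Finally, the substitution $f^k_{l,m}=(1+r^2)\varphi^k_{l,m}$, suggested by the conformal factor in stereographic projection, clears the denominators, collects the potential into the single rational function $V_{l,m}$ displayed in~\eqref{eqn:vlm}, and brings the ODE into the stated form~\eqref{equ::modeequation}.

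The main obstacle is the bookkeeping in the second step: the terms $\Gamma[y]Q_0(y,\Psi)$ and $\nabla\Gamma[y]\cdot\Psi\,Q_0(y,y)$ each produce several contributions that individually depend on the vector structure of $\Psi$ in a nontrivial way (cross-terms of the form $y_i\partial_j\Psi_k$, $y_i y_j\Psi_k$, $\Psi_i$, etc.), and one must verify that their sum collapses into expressions involving only $\Delta_{S^2}\Psi$, $\mathfrak{M}\Psi$, $r\partial_r\Psi$, and $\Psi$ itself up to $r$- and $(1+r^2)$-dependent radial factors. Carrying out this cancellation carefully and tracking how the factor $(1+r^2)^{-2}$ enters is what both motivates the substitution $f^k_{l,m}=(1+r^2)\varphi^k_{l,m}$ and yields the exact shape of the potential $V_{l,m}$.
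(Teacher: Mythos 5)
Your proposal takes essentially the same route as the paper's proof: expand the linearised equation in self-similar coordinates, compute the null forms at $u_0 = y$, recognise the angular part as a radial-coefficient combination of $\Delta_{S^2}$ and $\mathfrak{M}$ (equivalently $\mathfrak{C}$), restrict to the joint eigenspaces $W_{l,m}$ using Lemma~\ref{lem:decomp}, and perform the substitution $f^k_{l,m} = (1+r^2)\varphi^k_{l,m}$. One small imprecision: the nonlinear term $2\Gamma[u_0]Q_0(u_0,\Psi)$ actually produces $\mathfrak{M}\Psi$ with weight $-\tfrac{4}{1+r^2}$, not $\mathfrak{C}\Psi$ with a $(1+r^2)^{-2}$ weight; the paper then invokes $\mathfrak{M} = \tfrac{1}{2}(\mathfrak{C} + \Delta_{S^2} - 2)$ to reorganise the angular operators, but since both $\mathfrak{M}$ and $\mathfrak{C}$ are simultaneously diagonal on each $W_{l,m}$ this bookkeeping choice does not affect the outcome.
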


\begin{proof}
We begin by writing out the linearised wave maps equation~\eqref{equ::linintro} around the blow-up solution $u_0$ explicitly in self-similar coordinates $(\tau, y)$. A quick computation shows that the Christoffel symbols take the form
\begin{equation}
\Gamma^i_{jk}(u) = \frac{2}{1+ \left| u \right|^2}(u_i \delta_{jk} -u_j \delta_{ik} -u_k \delta_{ij}).
\end{equation}
A change of coordinates and elementary computations yield
\begin{equation}
\nabla \Gamma(y) \cdot \Psi \: Q_0(y,y) = \frac{2(1-r^2)}{1+r^2} \Psi,
\end{equation}
as well as the identity
\begin{equation}
2 \Gamma(y) Q_0(y,\Psi) = -\frac{4}{1+r^2} \mathfrak{M} \Psi + \frac{4 r^3- 4 r}{1+r^2} \partial_r \Psi + \frac{4r^2}{1+r^2} \partial_{\tau} \Psi,
\end{equation}
where $\mathfrak{M}$ is as in Lemma~\ref{lem:decomp}. Recall we showed the identity $\mathfrak{M} = \frac{1}{2}(\mathfrak{C} + \Delta_{S^2} - 2)$ in Lemma~\ref{lem:decomp}. By expressing the free wave operator $\Box \Psi$ in self-similar coordinates we find
\begin{equation} \label{linearizedeqwithcasimir}
\begin{aligned}
-\partial_{\tau \tau} \Psi -\frac{1-3r^2}{1+r^2} \partial_{\tau} \Psi &-2 r \partial_{r \tau} \Psi +(1-r^2) \partial_{rr} \Psi + 2 \frac{(1-r^2)^2}{r(1+r^2)} \partial_r \Psi \\
 &-\frac{2}{1+r^2} \mathfrak{C} \Psi + \frac{1-r^2}{r^2(1+r^2)} \Delta_{S^2} \Psi + \frac{6-2r^2}{1+r^2} \Psi =0.
\end{aligned}
\end{equation}
Let us now decompose $\Psi$ in the space $L^2(S^2; \C^3)$ as in equation~\eqref{equ::modeequation}. Since the $Z^k_{l,m}$ are mutually orthogonal in $L^2(S^2; \C^3)$, the radial coefficients can be computed up to normalisation constants as
\begin{equation}
f_{l,m}^k(r) = \int_{S^2} \Psi(r \omega) Z_{l,m}^k(\omega) \, d \omega.
\end{equation}
This identity and $\Psi \in C^{\infty}(\overline{B}_1(0); \R^3)$ together with the fact that all basis functions $Z^k_{l,m}$ may be chosen to be real-valued implies that $f_{l,m}^k \in C^\infty([0,1], \R)$. Lemma~\ref{lem:decomp} implies that $-\Delta_{S^2} Z_{l,m}^k = l(l+1) Z_{l,m}^k$ and $\mathfrak{C} Z_{l,m}^k = m(m+1) Z_{l,m}^k$. Inserting these two identities in equation~\eqref{linearizedeqwithcasimir}, we find that $f^k_{l,m}$ solves the equation
\begin{equation}
\begin{aligned}
- 2r \partial_{\tau r} f^k_{l,m} &+ (1-r^2) \partial_{rr} f^k_{l,m} + 2 \frac{(1-r^2)^2}{r(1+r^2)} \partial_r f^k_{l,m}  \\
&+ \frac{-2r^4+ C_{l,m} r^2 - l(l+1)}{r^2(1+r^2)} f^k_{l,m} -\lambda^2 f^k_{l,m} - \frac{1-3r^2}{1+r^2} \lambda f^k_{l,m} =0,
\end{aligned}
\end{equation}
where the coefficient $C_{l,m}$ is given by
\begin{equation}
C_{l,m}=6 - 2m(m+1)+l(l+1).
\end{equation}
This equation can be simplified through the following transformation of the dependent variable: $f^k_{l,m} = (1+r^2) \varphi^k_{l,m}$. Notice that $1+r^2$ and its inverse are regular at every point in $[0,1]$, so that the smoothness properties of solutions are unchanged by this. Then $\varphi^k_{l,m}$ solves
\begin{equation}
(1-r^2) \partial_{rr} \varphi^k_{l,m} + \left( \frac{2}{r} - 2 (\lambda +1) r \right) \partial_r \varphi^k_{l,m} - (\lambda^2 + \lambda + V_{l,m}) \varphi^k_{l,m} = 0,
\end{equation}
where the potential $V_{l,m}$ is now given by
\begin{align}
V_{l,m} &= - \frac{(C_{l,m} -10) r^4 + (C_{l,m} - l(l+1) +6) r^2 -l(l+1)}{r^2(1+r^2)^2}.
\end{align}
By inserting the expression for $C_{l,m}$ into the above identity, the reader may easily verify that $V_{l,m}$ is given by~\eqref{eqn:vlm}.
\end{proof}

\begin{remark} \label{rem:corotational_case}
We remark that if we set $l=1,m=0$, equation~\eqref{equ::modeequation} reduces to the co-rotational case. In fact, equation~\eqref{equ::modeequation} agrees with the equation studied in~\cite{donn}. We remark that the fact that we only obtain the equation in the form studied in~\cite{donn} after performing the transformation $f = (1+r^2) \varphi$ is natural under the light of the different conventions used: When expressed in our coordinates, Donninger et al. use the ansatz $\tan\left(\frac{f(r)}{2}\right) \frac{y}{r}$ whereas in our setting it is most natural to use $f(r) \frac{y}{r}$. In the linearisation, this induces a factor of $\tan'(\arctan(r))=1+r^2$.
\end{remark}

\section{Removing mode solutions generated by symmetries} \label{sec:removingsols}
We showed in Section~\ref{sec:symmetries_and_modes} that the symmetries of the wave maps equation generate mode solutions with growth rates $\lambda \in \{0,1\}$ to the linearised equation around $u_0$. In Section~\ref{sec:clebschgordan} we then showed that each of these mode solutions corresponds to a solution to equation~\eqref{equ::modeequation}. To show that there are no additional non-trivial smooth mode solutions with $\Re \lambda \geq 0$, we project the known solutions away using the `supersymmetry trick'~\cite[Section 3.5]{donn}.

\begin{definition}[SUSY transform] \label{def:susy}
Let $\{ \varphi_{0,1}^0, \varphi_{0,1}^1, \varphi_{1,0}, \varphi_{1,1}, \varphi_{2,1} \}$ be the set of solutions to equation~\eqref{equ::modeequation} generated by the symmetries of the wave maps equation with corresponding growth rates $\lambda_{l,m}$, see Table~\ref{table:lemma5}. Let $\varphi \in C^\infty([0,1]; \R)$ be a smooth solution to equation~\eqref{equ::modeequation} with growth rate $\lambda \in \C$ such that $\Re \lambda \geq 0$. Let us define a class of multiplication operators given by
\begin{equation}
    M_{a,b} (\varphi)(r) = r^a (1-r^2)^b \varphi(r),
\end{equation}
where $a,b \in \C$. For $(l,m) \in \{ (1,0),(2,1),(1,1) \}$ we then define the supersymmetric transform of $\varphi$ as
\begin{equation}
    S_{l,m}(\varphi) = M_{-1, 1 - \frac{\lambda}{2}} (\partial_r - \omega_{l,m}) M_{1, \frac{\lambda}{2}} \varphi,
\end{equation}
where the weights $\omega_{l,m}$ are defined by
\begin{equation}
    \omega_{l,m} = \partial_r \log \left( M_{1, \frac{\lambda_{l,m}}{2}} \varphi_{l,m} \right).
\end{equation}
For $(l,m) = (0,1)$, we define
\begin{equation}
    S_{0,1}(\varphi) = M_{-1, 1 - \frac{\lambda}{2}} (\partial_r - \omega_{0,1}^1) M_{0, 1} (\partial_r - \omega_{0,1}^0) M_{1, \frac{\lambda}{2}} \varphi,
\end{equation}
where
\begin{equation}
\begin{gathered}
    \omega_{0,1}^0 = \partial_r \log (M_{1, 0} \varphi_{0,1}^0), \\
    \omega_{0,1}^1 = \partial_r \log (M_{0,1} (\partial_r - \omega_{0,1}^0) M_{1,1} \varphi_{0,1}^1).
\end{gathered}
\end{equation}
\end{definition}

\begin{lemma} \label{lem:susytransform}
Let $(l,m) \in \{ (0,1), (1,0),(1,1),(2,1) \}$ and assume $\varphi \in C^\infty([0,1]; \R)$ is a smooth solution to equation~\eqref{equ::modeequation} with parameters $l,m$ and growth rate $\lambda \in \C$, $\Re \lambda \geq 0$. Then the supersymmetric transform $S_{l,m}(\varphi)$ is well-defined and $S_{l,m}(\varphi) \in C^\infty([0,1]; \C)$. Furthermore, each $\tilde{\varphi} = S_{l,m}(\varphi)$ solves the ordinary differential equation
\begin{equation} \label{equ::modeequationtrans}
(1-r^2) \partial_{rr} \tilde{\varphi} + \left( \frac{2}{r} - 2 (\lambda +1) r \right) \partial_r \tilde{\varphi} - (\lambda^2 + \lambda + \tilde{V}_{l,m}) \tilde{\varphi} = 0,
\end{equation}
where the potentials $\tilde{V}_{l,m}$ are given in Table~\ref{table:lemma5}. In addition, we can characterise the kernel of the supersymmetric transform, depending on the value of $(l,m)$. If $(l,m) \in \{ (1,1),(2,1) \}$ and $S_{l,m}(\varphi) = 0$ then for some $c \in \C$
\begin{equation}
\begin{cases}
\varphi=0 \quad &\text{if } \lambda \neq 0 \\
\varphi = c \varphi_{l,m} \quad &\text{if } \lambda = 0
\end{cases}
\end{equation}
Similarly, when $(l,m)=(1,0)$ and $S_{1,0}(\varphi)=0$, then for some $c \in \C$
\begin{equation}
\begin{cases}
\varphi=0 \quad &\text{if } \lambda \neq 1 \\
\varphi = c \varphi_{1,0} \quad &\text{if } \lambda = 1
\end{cases}
\end{equation}
and finally when $(l,m)=(0,1)$ and $S_{0,1}(\varphi)=0$ then for some $c \in \C$
\begin{equation}
\begin{cases}
\varphi=0 \quad &\text{if } \lambda \neq 0,1 \\
\varphi = c \varphi_{0,1}^0  \quad &\text{if } \lambda = 0 \\
\varphi = c \varphi_{0,1}^1 \quad &\text{if } \lambda = 1
\end{cases}
\end{equation}
\end{lemma}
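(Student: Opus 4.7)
The approach is the classical supersymmetric factorization. The plan is to first conjugate equation~\eqref{equ::modeequation} by the weight $M_{1,\lambda/2}$, setting $\psi = r(1-r^2)^{\lambda/2} \varphi$, which puts the equation in a Schr\"odinger-like form $-(1-r^2)\psi'' + U_{l,m}(r,\lambda) \psi = 0$. The weight is chosen precisely so that the first-order derivative and the $r$-coefficient are absorbed, making the resulting operator formally symmetric in the natural weighted $L^2$ pairing. Since each $\varphi_{l,m}$ solves~\eqref{equ::modeequation} at $\lambda = \lambda_{l,m}$, the function $M_{1,\lambda_{l,m}/2} \varphi_{l,m}$ is a zero mode of the corresponding Schr\"odinger operator, and its logarithmic derivative $\omega_{l,m}$ is smooth on $(0,1)$ once one checks from the explicit radial profiles of Section~\ref{sec:clebschgordan} that the seeds do not vanish in the interior of the interval.

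The key algebraic identity to exploit is that a Schr\"odinger operator possessing a non-vanishing zero mode $\psi_0$ factors as $A^{\ast} A$ with $A = \partial_r - \omega$ and $\omega = \psi_0'/\psi_0$; its SUSY partner $A A^{\ast}$ then has a modified potential differing from the original by $-2\omega'$. Thus, given a solution $\psi$ of the Schr\"odinger form of~\eqref{equ::modeequation}, the function $A\psi$ solves the partner equation, and stripping off the weight via $M_{-1, 1-\lambda/2}$ will return exactly equation~\eqref{equ::modeequationtrans}. A case-by-case computation of $\omega_{l,m}$ from the explicit seeds should yield the potentials $\tilde V_{l,m}$ tabulated in Table~\ref{table:lemma5}. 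For $(l,m)=(0,1)$ the construction needs to be iterated, because the relevant subspace carries two independent smooth mode solutions, at $\lambda=1$ and $\lambda=0$; one SUSY step removes each in turn, which is why the definition of $S_{0,1}$ contains the composition of two first-order operators.

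The smoothness statement $S_{l,m}(\varphi) \in C^\infty([0,1]; \C)$ has to be argued separately, since the map formally involves the factors $r^{-1}$ and $(1-r^2)^{1-\lambda/2}$. This will be handled by a Frobenius-type analysis at the regular singular endpoints $r=0$ and $r=1$: smooth solutions of~\eqref{equ::modeequation} have prescribed characteristic exponents at each endpoint, and one must verify that the apparent singular factors in $M_{-1,1-\lambda/2}(\partial_r - \omega_{l,m}) M_{1,\lambda/2} \varphi$ are compensated by the leading local behavior of $\varphi$ together with that of $\omega_{l,m}$. The hypothesis $\Re \lambda \geq 0$ enters to exclude problematic branches of $(1-r^2)^{\lambda/2}$. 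I expect this regularity verification, particularly at $r=1$ where the characteristic exponents depend on $\lambda$, to be the most delicate part of the proof, since one needs to rule out the appearance of resonant logarithmic contributions after the transform.

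Finally, the kernel characterization reduces to first-order ODEs. If $S_{l,m}(\varphi)=0$ for $(l,m) \in \{(1,0),(1,1),(2,1)\}$, then $(\partial_r - \omega_{l,m}) M_{1,\lambda/2} \varphi = 0$, which integrates to $\varphi = c(1-r^2)^{(\lambda_{l,m}-\lambda)/2} \varphi_{l,m}$ for some $c \in \C$; smoothness of $\varphi$ on $[0,1]$ together with $\Re \lambda \geq 0$ will then force either $c = 0$ or $\lambda = \lambda_{l,m}$, giving the stated dichotomy. For $(l,m)=(0,1)$ the same argument is iterated: solving $S_{0,1}(\varphi)=0$ from the outside in produces two successive first-order ODEs whose combined solution space is the two-dimensional span of $\varphi_{0,1}^0$ and $\varphi_{0,1}^1$, and smoothness together with $\Re \lambda \geq 0$ selects the three subcases of the lemma according to whether $\lambda = 1$, $\lambda = 0$ or neither.
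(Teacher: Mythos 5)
Your plan reproduces the paper's structure: conjugate by $M_{1,\lambda/2}$ to a Schr\"odinger-type form, factor through the logarithmic derivative of the seed, pass to the partner equation, undo the conjugation, then treat smoothness by Frobenius analysis at the endpoints and the kernel by integrating a first-order ODE. Two items deserve attention.

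First, the step ``$A\psi$ solves the partner equation $AA^*$ with potential shifted by $-2\omega'$'' is the constant-coefficient SUSY story and does not apply verbatim here. After the $M_{1,\lambda/2}$ conjugation the mode equation reads $A^*A\psi = \lambda(2-\lambda)(1-r^2)^{-2}\psi$, with the spectral parameter coupled to an $r$-dependent weight that does not commute with $A$; applying $A$ directly produces an extra term proportional to $\partial_r\big[(1-r^2)^{-2}\big]\psi$. The correct manoeuvre is to clear the weight first, obtaining $(1-r^2)^2A^*A\psi = \lambda(2-\lambda)\psi$, and only then apply $A$: the partner is $A\big[(1-r^2)^2A^*\,\cdot\,\big]$, not $AA^*$. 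This is why the definition of $S_{l,m}$ carries the prefactor $M_{-1,1-\lambda/2}$ rather than $M_{-1,-\lambda/2}$; the extra power of $(1-r^2)$ is exactly what makes the final equation come out in the claimed form with the tabulated $\tilde V_{l,m}$.

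Second, your kernel argument for $(l,m)\in\{(1,0),(1,1),(2,1)\}$ takes a slightly different, and somewhat more elementary, route than the paper: you integrate the first-order ODE to $\varphi = c(1-r^2)^{(\lambda_{l,m}-\lambda)/2}\varphi_{l,m}$ and then impose smoothness at $r=1$ (where $\varphi_{l,m}$ is nonzero) together with $\Re\lambda\geq 0$ to force $(\lambda_{l,m}-\lambda)/2 \in \mathbb{N}_0$ and hence $\lambda=\lambda_{l,m}$. The paper instead applies the conjugate factor to derive the constraint $\lambda(2-\lambda)=\lambda_{l,m}(2-\lambda_{l,m})$, which gives $\lambda\in\{\lambda_{l,m},\,2-\lambda_{l,m}\}$, and then rules out the spurious root by the non-smoothness of $\varphi_{l,m}/(1-r^2)$; that version never uses $\Re\lambda\geq 0$. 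Both are valid. For $(0,1)$, however, you need to make the case distinction explicit: one integrates $(\partial_r-\omega_{0,1}^1)\bar\psi = 0$ first and must separate $\bar\psi\equiv 0$ (reducing to the one-dimensional family through $\varphi_{0,1}^0$, forcing $\lambda=0$) from $\bar\psi\not\equiv 0$ (giving a two-parameter family $\psi = a\psi_0 + b\psi_1$ with $b\neq 0$, forcing $\lambda=1$ and then $a=0$ by smoothness of $\varphi$). Collapsing these to ``a two-dimensional span'' obscures why the lemma's conclusion is a three-way dichotomy rather than a single statement.
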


\begin{remark} \label{rem:vlm}
Let us extend the definition of $\tilde{V}_{l,m}$ trivially by setting $\tilde{V}_{l,m} = V_{l,m}$ whenever $(l,m) \notin \{ (0,1), (1,0),(1,1),(2,1) \}$. Given the result of Lemma~\ref{lem:susytransform}, it then suffices to prove that any smooth solution to equation~\eqref{equ::modeequationtrans} is the trivial (zero) solution.
\end{remark}

\newcommand{\tablespacingTwo}{0.15cm}
\newcommand{\tablespacingTwoBigger}{0.25cm}
\renewcommand{\arraystretch}{1.1}
\begin{table}[ht]
    \caption{Overview of the solutions $\varphi_{l,m}$, the weights $\omega_{l,m}$ from Definition~\ref{def:susy} and the potentials $V_{l,m}$ and $\tilde{V}_{l,m}$ from Lemma~\ref{lem:susytransform}, alongside the growth rate $\lambda_{l,m}$. We note that to keep notation simple, what is called $\omega_{0,1}$ in the first row is referred to as $\omega_{0,1}^0$ elsewhere, similarly for the second row and $\varphi_{l,m}$.}\label{table:lemma5}%
	\begin{tabular}{@{}lccccc@{}}
		\toprule
		$(l,m)$ & $\lambda_{l,m}$ & $\varphi_{l,m}$ & $\omega_{l,m}$ & $V_{l,m}$ & $\tilde{V}_{l,m}$ \\
		\midrule
		\multirow{2}{*}{$(0,1)$} & $0$ & $\frac{(r^2-3)}{1+r^2}$ & $\frac{r^4+6 r^2-3}{r \left(r^4-2 r^2-3\right)}$ & \multirow{2}{*}{$\frac{8 (r^2-1)}{(1+r^2)^2}$} & \multirow{2}{*}{$\frac{6}{r^2}$} \\[\tablespacingTwo]
             & $1$ & $ \frac{1}{1+r^2}$ & $\frac{r^4-9 r^2+6}{r(1-r^2)(3-r^2)}$ &  &  \\[\tablespacingTwoBigger]
            $(1,0)$ & $1$ & $\frac{r}{1+r^2}$ & $\frac{r^4+3 r^2-2}{r (r^4-1)}$ & $\frac{2r^4 - 12r^2 + 2}{r^2 (1+r^2)^2}$ & $\frac{6-2r^2}{r^2(1+r^2)}$ \\[\tablespacingTwoBigger]
            $(1,1)$ & $0$ & $\frac{r}{1+r^2}$ & $\frac{2}{r(1+r^2)}$ & $\frac{6r^4 - 8r^2 + 2}{r^2 (1+r^2)^2}$ & $\frac{6-2r^4}{r^2(1+r^2)}$ \\[\tablespacingTwoBigger]
            $(2,1)$ & $0$ & $\frac{r^2}{1+r^2}$ & $\frac{3+r^2}{r(1+r^2)}$ & $\frac{2r^4 - 8r^2 + 6}{r^2 (1+r^2)^2}$ & $\frac{12}{r^2(1+r^2)}$ \\
		\botrule
	\end{tabular}
\end{table}
\renewcommand{\arraystretch}{1.0}

\begin{proof}[Proof of Lemma~\ref{lem:susytransform}]
We note that each of the solutions generated by the symmetries of the wave maps equation is non-vanishing and smooth in the interior $r \in (0,1)$. It follows immediately from Definition~\ref{def:susy} that $S_{l,m}(\varphi)(r)$ is well-defined and smooth for $r \in (0,1)$. We demonstrate that $S_{l,m}(\varphi)$ solves equation~\eqref{equ::modeequationtrans} when $r \in (0,1)$ for the case $(l,m)=(1,1)$ and leave the remainder to the reader, as the computations are similar and do not add additional clarity.

Let us therefore consider the case $(l,m)=(1,1)$ and show that $S_{1,1}(\varphi)$ solves the mode equation~\eqref{equ::modeequationtrans} on $r \in (0,1)$. If we define $\psi(r) = r (1-r^2)^{\frac{\lambda}{2}} \, \varphi(r)$, then $\psi$ solves
\begin{equation} \label{susyeq1}
- \partial_{rr} \psi + \frac{V_{1,1}(r)(1-r^2) +\lambda ( \lambda - 2)}{(1-r^2)^2} \psi = 0.
\end{equation}

In particular, when we transform the solution $\varphi_{1,1}$ in this way, we obtain a solution $\psi_{1,1}$ to~\eqref{susyeq1} with $\lambda = 0$. Put another way, if we evaluate the potential in~\eqref{susyeq1} for $\lambda = 0$ and define this as
\begin{equation}
\mathcal{V}_{1,1} = \frac{V_{1,1}(r)}{1-r^2},
\end{equation}
then $\psi_{1,1}$ solves $- \partial_{rr} \psi_{1,1} + \mathcal{V}_{1,1} \psi_{1,1} = 0$. We now define $\omega_{1,1} = \frac{\partial_r \psi_{1,1}}{\psi_{1,1}}$ and note that $\omega_{1,1}$ is smooth and well-defined on $(0,1]$, since $\psi_{1,1}$ is nowhere vanishing on $(0,1]$. A quick computation allows one to verify that $\omega_{1,1}$ is given explicitly by the expression provided in Table~\ref{table:lemma5}. Then the differential operator from above may be factorised as
\begin{equation} \label{equ:step1}
    - \partial_{rr} +\mathcal{V}_{1,1} = (- \partial_r - \omega_{1,1}) (\partial_r - \omega_{1,1}).
\end{equation}
This motivates us to define $\tilde{\psi} = (\partial_r - \omega_{1,1}) \psi$ and apply the operator $\partial_r - \omega_{1,1}$ to~\eqref{susyeq1} after multiplying by $(1-r^2)^2$. We obtain
\begin{equation}
(\partial_r - \omega_{1,1}) \left( (1-r^2)^2 (-\partial_r - \omega_{1,1}) \tilde{\psi} \right) = \lambda (2- \lambda) \tilde{\psi}.
\end{equation}
By expanding this out we arrive at the equation
\begin{equation} \label{equ:step2}
-(1-r^2)^2 \partial_{rr} \tilde{\psi} + 4 r(1-r^2) \partial_r \tilde{\psi} + (1-r^2) W_{1,1}(r) \tilde{\psi} =\lambda (2- \lambda) \tilde{\psi},
\end{equation}
where the potential $W_{1,1}$ is given by
\begin{equation}
W_{1,1}(r) = (1-r^2)(\omega_{1,1}^2(r)- \partial_r \omega_{1,1}(r)) + 4 r \omega_{1,1}(r).
\end{equation}
Finally, we apply another transformation given by $\tilde{\psi} = r (1-r^2)^{\frac{\lambda}{2} -1} \tilde{\varphi}$ and find
\begin{equation} \label{decoupledsusy}
(1-r^2) \partial_{rr} \tilde{\varphi} + \left( \frac{2}{r}- 2 (\lambda+1) r \right) \partial_r \tilde{\varphi} - \lambda( \lambda +1) \tilde{\varphi} - \tilde{V}_{1,1} \tilde{\varphi} = 0
\end{equation}
where the potential $\tilde{V}_{1,1}$ is given by
\begin{equation}
\tilde{V}_{1,1}(r) = W_{1,1}(r) -2,
\end{equation}
which can be easily shown to be equal to the expression provided in the statement of the Lemma. Stringing together the transformations in this argument produces the SUSY transformation $S_{1,1}$ and proves that $S_{1,1}(\varphi)$ solves equation~\eqref{equ::modeequationtrans}. The remaining cases are treated in a similar fashion, with the exception of the case $(l,m)=(0,1)$, where two mode solutions need to be projected away. This is simply accomplished by repeating the analogue of steps~\eqref{equ:step1} to~\eqref{equ:step2} twice, once for the growth rate $\lambda=0$ and once for the growth rate $\lambda=1$.

Next, we show that $S_{l,m}(\varphi)$ is in fact smooth at the endpoints $r=0$ and $r=1$. We will make extensive use of the Fuchs--Frobenius theory of ordinary differential equations in the complex plane, see for instance~\cite[Chapter 4]{teschl}. First we consider the (un-transformed) solution $\varphi$ to equation~\eqref{equ::modeequation} with parameters $(l,m)$.

\begin{claim*}
$\varphi(r )\simeq r^l$ as $r \rightarrow 0$ and $\varphi(r) \simeq (1-r)^n$ for some integer $n\geq 0$ as $r \rightarrow 1$.
\end{claim*}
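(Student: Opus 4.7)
The plan is to apply Fuchs--Frobenius theory at the regular singular points $r=0$ and $r=1$ of the mode equation~\eqref{equ::modeequation}. Throughout I may assume $\varphi \not\equiv 0$, as otherwise the claim is trivially satisfied.

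\emph{Behavior at $r=0$.} From~\eqref{eqn:vlm} one reads off $V_{l,m}(r) = l(l+1)/r^2 + O(1)$ as $r \to 0$. Multiplying the ODE by $r^2$ and isolating the leading-order singular terms yields the indicial equation $\alpha(\alpha-1) + 2\alpha - l(l+1) = 0$, with roots $\alpha = l$ and $\alpha = -l-1$. Since these indices differ by the integer $2l+1$, Frobenius theory provides two linearly independent local solutions: one of the form $r^l h_1(r)$ with $h_1$ analytic near $0$ and $h_1(0) \neq 0$, and a second one whose leading behavior is $r^{-l-1}$, possibly augmented by a $\log r$ term. The second solution is never smooth at $r=0$, so smoothness of $\varphi$ forces $\varphi = c\, r^l h_1(r)$ with $c \neq 0$, giving $\varphi(r) \simeq r^l$ as $r \to 0$.

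\emph{Behavior at $r=1$.} Setting $s = 1-r$, one has $1-r^2 = s(2-s)$ and $\tfrac{2}{r} - 2(\lambda+1)r = -2\lambda + O(s)$, while $V_{l,m}$ is analytic at $r=1$. To leading order the equation near $s=0$ reads $2 s \partial_{ss}\varphi + 2\lambda \partial_s \varphi + O(1)\cdot \varphi = 0$, giving the indicial equation $2\alpha(\alpha - 1 + \lambda) = 0$ with roots $\alpha = 0$ and $\alpha = 1-\lambda$. Under $\Re \lambda \geq 0$, three subcases arise: (i) $1-\lambda \notin \mathbb{Z}_{\geq 0}$, in which case the solution with index $1-\lambda$ carries a non-integer or complex power of $s$ and is not smooth, forcing $\varphi$ to coincide with the analytic solution of index $0$; (ii) $\lambda = 0$, where the indices are $\{0,1\}$ and a possible logarithmic branch must be ruled out by smoothness; (iii) $\lambda = 1$, where the indices collide at $0$ and the log branch is again excluded by smoothness. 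In every case $\varphi$ is analytic at $r=1$, and since $\varphi \not\equiv 0$ its Taylor expansion has a first non-vanishing coefficient at some integer $n \geq 0$, yielding $\varphi(r) \simeq (1-r)^n$.

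The main subtlety I anticipate lies in the log-branch subcases (ii) and (iii), where the indices coincide or differ by a positive integer and one of the Frobenius solutions may contain $\log(1-r)$. The general principle I will invoke is that a $C^\infty$ solution of a linear ODE at a regular singular point is automatically analytic, so any logarithmic contribution to the fundamental system must appear with zero coefficient in $\varphi$; this yields the stated integer-power asymptotic with the correct leading root.
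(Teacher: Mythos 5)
Your proof is correct and follows essentially the same route as the paper: Fuchs--Frobenius theory at the two regular singular points $r=0$ and $r=1$, with smoothness forcing the selected branch. The analysis at $r=0$ is identical to the paper's.

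At $r=1$ there is a discrepancy in the stated indices: you obtain $\{0,\,1-\lambda\}$, whereas the paper writes $\{0,\,2-\lambda-\lambda^2\}$. Your computation is in fact the correct one. This can be cross-checked directly from the Heun coefficients in~\eqref{equ::plm}: the coefficient of $(z-1)^{-1}$ in $p_{l,m}$ is $\lambda$, so $\delta = \lambda$ and the Frobenius exponents at $z=1$ are $\{0,\,1-\delta\} = \{0,\,1-\lambda\}$; the Möbius change of variables, the substitution $z=r^2$, and the s-homotopic prefactor $h_{l,m}$ all fix $z=1$ and preserve the indices there, and replacing $\tilde V_{l,m}$ by $V_{l,m}$ does not affect $p_0$. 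The paper's expression appears to be a typo. It has no bearing on the validity of either argument, since the second index is used only to be excluded by smoothness, and the paper's case analysis is carried out with enough generality that the conclusion stands regardless of its exact value.

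One small imprecision in your wording of subcase~(i): if $1-\lambda$ is a negative integer (e.g.\ $\lambda \in \{2,3,\dots\}$), then the recessive Frobenius solution carries a \emph{negative-integer} power of $s$ (possibly with a $\log$ correction), rather than a ``non-integer or complex power.'' The conclusion that it is not smooth, and hence that $\varphi$ must be proportional to the index-$0$ (analytic) solution, of course still holds, so this is a matter of phrasing rather than a gap.
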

\noindent
To see this, we compute the Frobenius indices of the untransformed mode equation~\eqref{equ::modeequation}. At $r=0$ we find $\{l,-(l+1) \}$ and at $r=1$ we have $\{0,2-\lambda-\lambda^2 \}$. Therefore at $r=0$ there exists a fundamental system of the form
\begin{align}
\varphi_1(r) &= r^l f(r) \\
\varphi_2(r) &= r^{-l-1} g(r) + C \log(r) \varphi_1(r)
\end{align}
with $f,g$ analytic in a neighbourhood of $r=0$ such that $f(0)=g(0)=1$ and a constant $C \in \C$ (which might be zero). Since we demand $\varphi$ to be smooth at $r=0$ we see that $\varphi(r) \simeq r^l$ as $r \rightarrow 0$. Now first assume that $2-\lambda-\lambda^2 = n \in \mathbb{Z}$. If $n \geq 0$, a fundamental system is given by
\begin{align}
\tilde{\varphi}_1(r) &= (1-r)^n \tilde{f}(r) \\
\tilde{\varphi}_2(r) &=  \tilde{g}(r) + C \log(1-r)\tilde{\varphi}_1(r)
\end{align}
where $\tilde{f}, \tilde{g}$ are analytic around $r=1$ with $\tilde{f}(1) = \tilde{g}(1)=1$ and $C \in \C$ might be zero except when $n=0$. If $n < 0$, a fundamental system is given by
\begin{align}
\tilde{\varphi}_1(r) &= \tilde{f}(r) \\
\tilde{\varphi}_2(r) &= (1-r)^n \tilde{g}(r) + C \log(1-r)\tilde{\varphi}_1(r)
\end{align}
If $2-\lambda-\lambda^2 \notin \mathbb{Z}$, a fundamental system is given by
\begin{align}
\tilde{\varphi}_1(r) &= \tilde{f}(r) \\
\tilde{\varphi}_2(r) &= (1-r)^{2-\lambda-\lambda^2} \tilde{g}(r)
\end{align}
In all cases it follows that $\varphi(r) \simeq 1$ as $r \rightarrow 1$, or better, i.e. $\varphi(r) \simeq (1-r)^n$ for some $n>0$. This concludes the proof of our claim. \\

Now we use the result of the claim to show that $S_{l,m}(\varphi)$ extends smoothly to $r=0$ and $r=1$. Let us first consider the case where $(l,m) \in \{ (1,0), (1,1), (2,1) \}$. From the explicit formulas in Table~\ref{table:lemma5} we find that
\begin{equation}
\begin{gathered}
    \omega_{1,0}(r) \simeq r^{-1}, \quad \omega_{1,1}(r) \simeq r^{-1}, \quad \omega_{2,1}(r) \simeq r^{-1} \quad \text{ as } r \rightarrow 0 \\
    \omega_{1,0}(r) \simeq(1-r)^{-1}, \quad \omega_{1,1}(r) \simeq 1, \quad \omega_{2,1}(r) \simeq 1 \quad \text{ as } r \rightarrow 1 \\
\end{gathered}
\end{equation}
By tracing all of the transformations and using the above claim, we find that $S_{l,m}(\varphi)(r) \simeq r^{l-1}$ as $r \rightarrow 0$ and $S_{l,m}(\varphi)(r) \simeq (1-r)^n$ for some integer $n \geq 0$ as $r \rightarrow 1$. Therefore $S_{l,m}(\varphi)(r)$ is smooth on $[0,1]$. In the case $(l,m)=(0,1)$, the explicit expressions in Table~\ref{table:lemma5} allow us to conclude
\begin{equation}
\begin{gathered}
    \omega^0_{0,1}(r) \simeq r^{-1}, \quad \omega^1_{0,1}(r) \simeq r^{-1} \quad \text{ as } r \rightarrow 0 \\
    \omega^0_{0,1}(r) \simeq 1, \quad \omega^1_{0,1}(r) \simeq (1-r)^{-1} \quad \text{ as } r \rightarrow 1 \\
\end{gathered}
\end{equation}
By tracing the transformations, we find that $S_{0,1}(\varphi)(r) \simeq (1-r)^n$ for some integer $n \geq 0$ as $r \rightarrow 1$ and $S_{0,1}(\varphi)(r) \simeq r^{-2} \varphi + \alpha r^{-1} \partial_r \varphi + \beta \partial^2_{rr} \varphi = O(r^{-2})$ as $r \rightarrow 0$, where $\alpha, \beta$ are smooth functions on $[0,1]$. In this case it is therefore not immediately apparent that $S_{0,1}(\varphi)$ is smooth at $r=0$. The Frobenius indices of the transformed equation at $r=0$ are computed to be $\{ 2,-3\}$. Therefore, even though our crude estimate only gives us $S_{0,1}(\varphi)(r) = O(r^{-2})$ as $r \rightarrow 0$, it excludes the possibility of our transformed solution to behave like $S_{0,1}(\varphi)(r) \simeq r^{-3}$ so that we must in fact have $S_{0,1}(\varphi)(r) \simeq r^2$ as $r \rightarrow 0$. Therefore, $S_{0,1}(\varphi)$ is also smooth on $[0,1]$.

Finally, we characterise the kernel of the SUSY transform $S_{l,m}$. First consider the case $(l,m)=(1,0)$. Then $S_{1,0}(\varphi)=0$ is equivalent to $(\partial_r - \omega_{1,0}) \psi = 0$, where $\psi = M_{1, \frac{\lambda}{2}} \varphi$ as above. This first-order ODE has the general solution $\psi = c \psi_{1,0}$, $c \in \C$. Since we assume that $\varphi$ is a solution to~\eqref{equ::modeequation}, $\psi$ solves the analogue of equation~\eqref{susyeq1} for the case $(l,m)=(1,0)$, which we can rewrite as
\begin{equation} \label{eqn:10kernel}
0=(- \partial_r - \omega_{1,0})\underbrace{( \partial_r - \omega_{1,0}) \psi}_{=0} = \frac{\lambda (2- \lambda) - \lambda_{1,0} (2- \lambda_{1,0})}{(1-r^2)^2} \psi
\end{equation}
Since $\lambda_{1,0} = 1$, and $\varphi$ is assumed to be a nontrivial solution, it follows from equality~\eqref{eqn:10kernel} that $\lambda = 1$. Therefore we find $\varphi(r) = c \varphi_{1,0}$ as claimed.

Next, let us characterise the kernel of the SUSY transform when $(l,m) = (1,1)$ and $(l,m) = (2,1)$. The argument proceeds analogously to above, but here $\lambda_{1,1} = \lambda_{2,1} = 0$, so that the analogue of equation~\eqref{eqn:10kernel} allows us to conclude that $\lambda \in \{0,2\}$. We obtain two solutions corresponding to the two different values of $\lambda$, namely
\begin{equation}
\varphi^0_{l,m}(r) = c \varphi_{l,m}(r), \quad \varphi^2_{l,m}(r) = c \frac{\varphi_{l,m}(r)}{1-r^2},
\end{equation}
where $\varphi^0_{l,m}$ solves equation~\eqref{equ::modeequation} with $\lambda=0$ whereas $\varphi^2_{l,m}$ solves~\eqref{equ::modeequation} with $\lambda=2$ and for some constant $c \in \C$. Explicitly, these solutions are given by (up to constants)
\begin{equation}
    \varphi^2_{1,1}(r) = \frac{r}{1-r^4}, \quad \varphi^2_{2,1}(r) =\frac{r^2}{1-r^4}.
\end{equation}
These solutions blow up like $(1-r)^{-1}$ around $r=1$, so that they are not smooth on $[0,1]$. Since our solution is assumed to be smooth, we may exclude $\varphi^2_{1,1}$ and $\varphi^2_{2,1}$ and conclude that $\lambda = 0$ and $\varphi(r) = c \varphi_{l,m}(r)$ as claimed. In addition, we remark that the solutions $\varphi^2_{1,1}$ and $\varphi^2_{2,1}$ are not contained in the Sobolev space $H^s(0,1)$ when $s > \frac{3}{2}$, so that it seems reasonable to expect that they will not play a role even in an analysis of the linear stability of the blow-up solution.

Finally, we consider the kernel of the SUSY transform $S_{0,1}$. Let us introduce for brevity the shorthand $\bar{\psi} = M_{0, 1} (\partial_r - \omega_{0,1}^0) M_{1, \frac{\lambda}{2}} \varphi$ and $\psi = M_{1, \frac{\lambda}{2}} \varphi$. We distinguish the case where $\bar{\psi} = 0$ and $\bar{\psi} \neq 0$. If we assume that $\bar{\psi} = 0$ it follows that $(\partial_r - \omega_{0,1}^0) \psi = 0$. Therefore as above, we find that $\psi = c \psi_0$ for some $c \in \C$ and we can rewrite the equation that $\psi$ solves as
\begin{equation}
0=(- \partial_r - \omega_{0,1}^0)\underbrace{( \partial_r - \omega_{0,1}^0) \psi}_{=0} = \frac{\lambda (2- \lambda)}{(1-r^2)^2} \psi.
\end{equation}
Similarly to above, we then conclude that since $\varphi \neq 0$, it follows that $\lambda \in \{0,2\}$. Therefore we again obtain two solutions of equation~\eqref{equ::modeequation} with $(l,m)=(0,1)$ given by
\begin{equation}
\varphi_0(r) = c \varphi_{0,1}^0(r), \quad \varphi_2(r) = c \frac{\varphi_{0,1}^0(r)}{1-r^2},
\end{equation}
for some $c \in \C$ and where $\varphi_0$ solves equation~\eqref{equ::modeequation} with $\lambda=0$ and $\varphi_2$ solves equation~\eqref{equ::modeequation} with $\lambda=2$. Since we demand $\varphi$ to be smooth, we can exclude the solution $\varphi_2$ and conclude that $\lambda=0$ and $\varphi = c \varphi_{0,1}^0(r)$ in this case.

If we assume $\bar{\psi} \neq 0$, then from $(\partial_r - \omega_{0,1}^1) \bar{\psi} = 0$ it follows that $\bar{\psi} = c \bar{\psi}_1$, where we have defined $\bar{\psi}_i = M_{0, 1} (\partial_r - \omega_{0,1}^0) M_{1, \frac{\lambda}{2}} \varphi_{0,1}^i$, $i =0,1$. This is equivalent to
\begin{equation}
    (\partial_r - \omega_{0,1}^0) \psi = c (\partial_r - \omega_{0,1}^0) \psi_1.
\end{equation}
The  general solution of this first order ODE is given by $\psi=a \psi_0 + b \psi_1$ for $a,b \in \C$. We must have that $b \neq 0$, since otherwise $\bar{\psi} = 0$. As above, we can express the equation that $\bar{\psi}$ solves as
\begin{equation}
0=(- \partial_r - \omega_{0,1}^1)\underbrace{( \partial_r - \omega_{0,1}^1) \bar{\psi}}_{=0} = \frac{\lambda (2- \lambda)-1}{(1-r^2)^2} \bar{\psi}.
\end{equation}
Since we assume that $\bar{\psi} \neq 0$, it follows that $(1-\lambda)^2 = 0$ so that $\lambda=1$. Therefore we can compute the general solution $\varphi$ in this case as
\begin{equation}
\varphi(r) = a \frac{r^2-3}{(1+r^2)\sqrt{1-r^2}} + b \frac{1}{1+r^2}
\end{equation}
Since we demand $\varphi$ to be smooth, we may conclude $a=0$ and $\varphi = c \varphi_{0,1}^1$ and $\lambda=1$ as claimed. This concludes the proof.
\end{proof}

\section{Heun and hypergeometric standard form} \label{sec:heunform}
After having removed the solutions generated by the symmetries if necessary, we now want to transform equation~\eqref{equ::modeequationtrans} into standard form. For the case $(l,m)=(0,1)$ we find that equation~\eqref{equ::modeequationtrans} can be transformed to yield a hypergeometric differential equation, whereas for all other cases, we obtain a Heun equation. We begin by briefly recalling some details about the hypergeometric and Heun standard form and refer the reader to~\cite[Chapters 15 and 31]{handbook} for a comprehensive overview of the hypergeometric and Heun equations.

Heun's equation in canonical form is a second-order linear differential equation in the complex plane given by
\begin{equation} \label{heunstandard}
\frac {d^2u}{dz^2} + \left[\frac{\gamma}{z}+ \frac{\delta}{z-1} + \frac{\epsilon}{z-a} \right] \frac {du}{dz} + \frac {\alpha \beta z -q} {z(z-1)(z-a)} u = 0,
\end{equation}
where we assume $\alpha, \beta, \gamma, \delta, a, q \in \C$ are such that $|a| \, \geq 1$ and $\epsilon=\alpha+\beta-\gamma-\delta+1$ to ensure that the point $z= \infty$ is a regular singular point. This equation has regular singularities at $z=0,1,a,\infty$. Any linear second-order equation in the complex plane with four regular singular points may be brought to this form through a M\"{o}bius transformation of the independent variable and s-homotopic transformations of the dependent variable. The Frobenius indices of this equation at the points $z=0,1,a,\infty$ are $\{0,1-\gamma \},\{0,1-\delta \},\{ 0,1-\eps \},\{ \alpha,\beta \}$ respectively. The simpler relative of the Heun's equation with only three regular singular points at $z=0,1,\infty$ is the hypergeometric differential equation which in canonical form reads
\begin{equation} \label{hyperstandard}
\frac {d^2u}{dz^2} + \left( \frac{c}{z} + \frac{1+a+b-c}{z-1} \right) \frac {du}{dz} + \frac{ab}{z(z-1)} u = 0,
\end{equation}
where $a,b,c \in \C$. Similarly to Heun's equation, any linear second-order ODE in the complex plane with three regular singular points can be brought to this form using the same transformations. The Frobenius indices at the regular singular points $z=0,1,\infty$ are given by $\{ 0, 1-c \},\{ 0, c-a-b \},\{ a,b \}$ respectively. 

Recall from Remark~\ref{rem:vlm} that we trivially extend the definition of the transformed potentials $\tilde{V}_{l,m}$ as $\tilde{V}_{l,m} = V_{l,m}$ whenever $(l,m) \notin \{ (0,1), (1,0),(1,1),(2,1) \}$. We now describe how to bring equation~\eqref{equ::modeequationtrans} into standard form. We distinguish the two cases $(l,m)=(0,1)$ and $(l,m)$ with $l>0$.

\subsection{Case 1: \texorpdfstring{$(l,m)=(0,1)$}{(l,m)=(0,1)}.} An inspection of the explicit form of the potential $\tilde{V}_{0,1}$ reveals that in this case, equation~\eqref{equ::modeequationtrans} is a linear second-order equation which has four regular singular points at $r=0,\pm 1, \infty$. We introduce the variable $z=r^2$ and find equation~\eqref{equ::modeequationtrans} transforms to
\begin{equation} \label{}
\frac {d^2 \varphi}{dz^2} + \left( \frac{3}{2z}+ \frac{\lambda}{z-1} \right) \frac {d \varphi}{dz} + \frac{\lambda(\lambda+1) + \tilde{V}_{0,1}(z)}{4z(z-1)} \varphi = 0.
\end{equation}
This equation possesses only three regular singular points at $z=0,1,\infty$. The general principle of bringing a linear second-order equation with three regular singular points into standard hypergeometric form makes use of the fact that, if $z=a$ is a singular point with Frobenius indices $\{ \alpha, \beta \}$, then the equation solved by $\psi(z) = (z-a)^{\gamma} \varphi(z)$ has indices $\{ \alpha + \gamma, \beta + \gamma \}$ at the point $z=a$. Choosing $\gamma \in \{ - \alpha, -\beta \}$ and doing this for every finite singular point brings the equation into standard form. In our case, we make the ansatz $\varphi(z) = z \psi(z)$ and obtain the equation
\begin{equation} \label{equ::modeequnhyper}
\frac {d^2 \psi}{dz^2} + \left( \frac{7}{2z} +\frac{\lambda }{z-1} \right) \frac {d \psi}{dz} + \frac{\lambda ^2+5 \lambda +6}{4 (z-1) z} \psi = 0,
\end{equation}
which is in hypergeometric standard form with coefficients $a=\frac{3+\lambda}{2}, b=\frac{2+\lambda}{2}, c = \frac{7}{2}$. We remark that trivially, if $\varphi: [0,1] \rightarrow \C$ is a smooth solution to equation~\eqref{equ::modeequationtrans}, then $\psi$ as defined above is a solution to~\eqref{equ::modeequnhyper} which is smooth at $z=0, 1$. In addition, if $\psi = 0$ then necessarily also $\varphi = 0$.

\subsection{Case 2: \texorpdfstring{$(l,m)$}{(l,m)} with \texorpdfstring{$l>0$}{l>0}.} In this case, an inspection of the explicit form of the potentials $\tilde{V}_{l,m}$ reveals that equation~\eqref{equ::modeequationtrans} is a linear second-order equation with four regular singular points at $r=0,\pm 1, \pm i, \infty$. As above, we introduce the new variable $z=r^2$ and find
\begin{equation}
\frac {d^2 \varphi}{dz^2} + \left( \frac{3}{2z}+ \frac{\lambda}{z-1} \right) \frac {d \varphi}{dz} + \frac{\lambda(\lambda+1) + \tilde{V}_{l,m}(z)}{4z(z-1)} \varphi = 0.
\end{equation}
This equation now only has $4$ regular singular points at $z=0, \pm 1, \infty$. For technical reasons which will become apparent in Section~\ref{sec:proofs}, we need to apply a coordinate transformation so that the only singular points in the unit disk are the points $z=0$ and $z=1$. We therefore apply the M\"{o}bius transformation $z \mapsto \frac{2z}{1+z}$. Under this transformation, the singular points are mapped to $(0,1,-1,\infty) \mapsto (0,1,\infty,2)$. Note carefully that any solution that is smooth at $z=0$ and $z=1$ remains so after this transformation. Denoting the new variable again by $z$, we obtain the equation
\begin{equation}
\frac {d^2 \varphi}{dz^2} + \left( \frac{3}{2z}+ \frac{\frac{1}{2}-\lambda}{z-2} + \frac{\lambda}{z-1} \right) \frac {d \varphi}{dz} + \frac{\lambda ^2+\lambda +\tilde{V}_{l,m}\left(\frac{z}{2-z}\right)}{2 (z-2)^2 (z-1) z} \varphi = 0
\end{equation}
In order to bring this equation into canonical Heun form~\eqref{heunstandard}, we carry out the same procedure outlined in Case~1 above for each of the finite singular points. We make the ansatz $\varphi(z) = h_{l,m}(z) \psi(z)$, where $h_{l,m}(z)$ depends on $(l,m)$ and is given by
\begin{align}
    h_{1,0}(z) &= z(2-z)^{\frac{\lambda}{2}}, \\
    h_{1,1}(z) &= z (2-z)^{\frac{\lambda-1}{2}} , \\
    h_{2,1}(z) &= z^{\frac{3}{2}}(2-z)^{\frac{\lambda}{2}}
\end{align}
and in all remaining cases,
\begin{equation}
    h_{l,m}(z) = z^{\frac{l}{2}} (2-z)^{\frac{\lambda}{2}}.
\end{equation}
It is then straightforward to verify that the equation solved by $\psi$ is in standard Heun form, 
\begin{equation} \label{equ::modeequheun}
\frac {d^2 \psi}{dz^2} + p_{l,m} \frac {d \psi}{dz} + q_{l,m} \psi = 0,
\end{equation}
where the terms $p_{l,m},q_{l,m}$ are given by
\begin{equation} \label{equ::plm}
p_{l,m}=\begin{dcases}
\frac{7}{2z} +\frac{\lambda }{z-1}+\frac{1}{2(z-2)} &\text{if } (l,m)=(1,0) \\
\frac{7}{2 z}+\frac{\lambda }{z-1}-\frac{1}{2 (z-2)} &\text{if } (l,m)=(1,1) \\
\frac{9}{2z} +\frac{\lambda }{z-1}+\frac{1}{2(z-2)} &\text{if } (l,m)=(2,1) \\
\frac{2 l+3}{2 z}+\frac{\lambda }{z-1}+\frac{1}{2 (z-2)} &\text{other cases}
\end{dcases}
\end{equation}
and
\begin{equation}
q_{l,m}=\begin{dcases}
\frac{z \left(\lambda ^2+6 \lambda +8\right) -\lambda ^2-12 \lambda-12}{4 (z-2) (z-1) z} &\text{if } (l,m)=(1,0) \\
\frac{z (\lambda ^2+4 \lambda +3)-\lambda ^2-12 \lambda-7}{4 (z-2) (z-1) z} &\text{if } (l,m)=(1,1) \\
 \frac{z \left(\lambda ^2+8 \lambda +15\right) -\lambda ^2-16 \lambda -27}{4 (z-2) (z-1) z} &\text{if } (l,m)=(2,1)
\end{dcases}
\end{equation}
and in the remaining cases,
\begin{equation}
q_{l,m}=\frac{z \left((\lambda -2) (\lambda +4)+l^2+2 \lambda  l+2 l\right)-l^2-4 \lambda  l-2 l-2 m^2-2 m-\lambda^2-4\lambda+12}{4 (z-2) (z-1) z}.
\end{equation}
We remark that by construction, if $\varphi$ is a smooth solution to equation~\eqref{equ::modeequationtrans}, then $\psi$ is a solution to~\eqref{equ::modeequheun} which is regular at $z= 0$ and $z=1$. In addition, if $\psi=0$ it follows that $\varphi = 0$.

\section{Mode stability for \texorpdfstring{$(l,m) = (0,1)$}{(l,m)=(0,1)}} \label{sec:modestability_simplecase}
We have established in Section~\ref{sec:heunform} that for the case $(l,m)=(0,1)$ it is enough to show that the only solutions to equation~\eqref{equ::modeequnhyper} which are smooth at $z=0$ and $z=1$ are the trivial zero solution when $\Re \lambda \geq 0$. Since equation~\eqref{equ::modeequnhyper} is in hyper-geometric standard form, the argument simplifies significantly in this case.

\begin{proposition} \label{prop:simplecase}
Let $\Re \lambda \geq \modebound$ and let $\psi$ be a solution to equation~\eqref{equ::modeequnhyper} which is regular at $z=0$ and $z=1$. Then $\psi = 0$.
\end{proposition}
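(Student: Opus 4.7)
The plan is to reduce \eqref{equ::modeequnhyper} to classical facts about the Gauss hypergeometric function. Comparing with the standard form \eqref{hyperstandard} identifies the parameters as
\begin{equation*}
a=\frac{\lambda+3}{2},\quad b=\frac{\lambda+2}{2},\quad c=\frac{7}{2},
\end{equation*}
so the Frobenius exponents are $\{0,-5/2\}$ at $z=0$ and $\{0,1-\lambda\}$ at $z=1$. Since $-5/2$ is not a non-negative integer, any solution that is continuous at $z=0$ must correspond to the Frobenius exponent $0$ there, and is therefore a scalar multiple of the Gauss hypergeometric series $\psi_0(z) := {}_2F_1(a,b;c;z)$. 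The whole task thereby reduces to showing that $\psi_0$ fails to be smooth on $[0,1]$ for every $\lambda\in\C$ with $\Re\lambda\geq 0$.

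For $\lambda\notin\mathbb{Z}$ I would invoke Gauss's connection formula
\begin{equation*}
\psi_0(z) = A(\lambda)\, G_1(1-z) + B(\lambda)\,(1-z)^{1-\lambda}\, G_2(1-z),
\end{equation*}
with $G_1,G_2$ analytic at $z=1$ and $G_i(0)=1$, and connection coefficient
\begin{equation*}
B(\lambda) = \frac{\Gamma(c)\,\Gamma(a+b-c)}{\Gamma(a)\,\Gamma(b)} = \frac{\Gamma(7/2)\,\Gamma(\lambda-1)}{\Gamma((\lambda+3)/2)\,\Gamma((\lambda+2)/2)}.
\end{equation*}
Under $\Re\lambda\geq 0$ and $\lambda\notin\mathbb{Z}$, the denominator arguments have real part at least $1$, so $\Gamma(a)\Gamma(b)$ is finite and non-vanishing; $\Gamma(\lambda-1)$ is also finite because $\lambda-1$ is not a non-positive integer. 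Hence $B(\lambda)\neq 0$, and the factor $(1-z)^{1-\lambda}$ is not $C^\infty$ on $[0,1]$ since $1-\lambda\notin\mathbb{Z}_{\geq 0}$. Therefore $\psi_0$ is not smooth at $z=1$, forcing the leading constant $C$ to vanish.

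The remaining values $\lambda\in\mathbb{Z}_{\geq 0}$ are handled case by case using the degenerate (logarithmic) versions of the connection formula found in \cite[\S 15.8]{handbook}: for $\lambda=1$ the exponents at $z=1$ coincide and $\psi_0$ acquires a logarithmic divergence with coefficient $-\Gamma(c)/(\Gamma(a)\Gamma(b))\neq 0$; for $\lambda=0$ the expansion of $\psi_0$ at $z=1$ contains a $(1-z)\log(1-z)$ term with a non-vanishing gamma-ratio coefficient, obstructing $C^1$-regularity; for $\lambda\in\mathbb{Z}_{\geq 2}$, $\psi_0$ blows up at $z=1$ algebraically, which can be seen either from a further application of the connection formula or through direct reductions such as the Euler identity ${}_2F_1(a,b;a;z) = (1-z)^{-b}$ that immediately handles the cases $\lambda=4$ and $\lambda=5$ (where $a=c$ or $b=c$). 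The main obstacle is the bookkeeping over these integer values of $\lambda$; the unifying principle throughout is that the obstruction is always controlled by a ratio of gamma functions whose denominator arguments have positive real part, and hence cannot vanish in the regime $\Re\lambda\geq 0$.
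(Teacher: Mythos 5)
Your proposal is correct in substance but takes a genuinely different, and more laborious, route than the paper. You reduce regularity at $z=0$ to $\psi = C\,{}_2F_1(a,b;c;z)$ and then attack regularity at $z=1$ via Gauss connection formulas, carefully splitting off the integer values $\lambda\in\mathbb{Z}_{\geq 0}$ where the connection formula degenerates (confluent exponents, logarithmic terms, or the special cases $a=c$, $b=c$). This works, and the unifying observation — that the obstruction is governed by gamma-ratios whose denominator arguments $a=(\lambda+3)/2$, $b=(\lambda+2)/2$ have real part $\geq 1$ and hence never hit poles — is exactly the right reason your coefficients cannot vanish. However, the bookkeeping for $\lambda\in\mathbb{Z}_{\geq 2}$ is only sketched ("a further application of the connection formula"), and for a complete proof you would need to invoke the appropriate logarithmic variants (DLMF 15.8.9--15.8.11) and verify in each branch that the singular coefficient is nonzero; you do this explicitly only for $\lambda\in\{0,1,4,5\}$.

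The paper's proof avoids all of this. Since the hypergeometric equation~\eqref{equ::modeequnhyper} has only the three regular singular points $0,1,\infty$, smoothness at $z=0$ and $z=1$ forces $\psi$ to be single-valued and holomorphic on a neighbourhood of $[0,1]$, and since loops around $0$ and $1$ generate $\pi_1(\C\setminus\{0,1\})$, $\psi$ extends to an entire function. The Frobenius indices at $\infty$ are precisely your $a,b$, both with real part bounded below by $1$ when $\Re\lambda\geq 0$, so $\psi(z) = O\bigl(|z|^{-(2+\Re\lambda)/2}\bigr)$ at infinity. An entire function decaying at infinity vanishes identically by Liouville. This global argument replaces the entire connection-coefficient computation and all the integer-$\lambda$ special cases with a single two-line observation. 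What your approach buys is independence from the global structure of the singular points: it would in principle also apply to a Heun equation (where the singular point at $z=2$ would destroy the entirety argument), at the cost of not having closed-form connection coefficients available. For the present hypergeometric case, though, the entire-plus-decay argument is strictly cleaner.
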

\begin{proof}
If $\psi$ is regular at $z=0$ and $z=1$ then it is holomorphic. The point $\infty$
is a regular singular point with Fredholm indices $a= \frac{3+\lambda}2$
and $ b= \frac{2+\lambda}2$ and hence 
\begin{equation}
    \left| \psi(z) \right| \le C (1+ |z|)^{-\frac{2+\Re \lambda}2}. 
\end{equation}
In particular $ \psi $ is an entire decaying function. By the maximum principle $\psi$ vanishes identically, which concludes the proof.
\end{proof}


\section{Mode stability for \texorpdfstring{$(l,m)$}{(l,m)} with \texorpdfstring{$l>0$}{l>0}}
 \label{sec:proofs}
 
We use the \textit{quasi-solution method} to show that there are no non-trivial smooth solutions to the equations~\eqref{equ::modeequationtrans} when $l>0$, $m \in \{ l-1,l,l+1 \}$ and $\Re \lambda \geq 0$. This section largely follows the presentation in~\cite{glogicthesis,glogic2,schoe}. The key result we will establish in this Section is the following Proposition:

\begin{proposition} \label{prop:heunsols}
	Let $\psi: \C \rightarrow \C$ be a solution to equation~\eqref{equ::modeequationtrans} with $l > 0$, $m \in \{ l-1,l,l+1 \}$ and with growth rate $\Re \lambda \geq 0$. If $\psi$ is smooth at $z = 0$ and $z=1$, then $\psi = 0$.
\end{proposition}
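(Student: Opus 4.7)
The plan is to apply the quasi-solution method of Costin--Donninger--Glogi\'c to the Heun equation~\eqref{equ::modeequheun}, which is equivalent to~\eqref{equ::modeequationtrans} for $l>0$. The transformed equation has regular singular points at $z=0,1,2,\infty$, and the M\"obius change of variable in Section~\ref{sec:heunform} was designed precisely so that the two endpoints of the physical interval are the regular singular points $z=0$ and $z=1$, while the remaining finite singularity $z=2$ sits outside. A function smooth at $z=0$ is, up to a scalar, the Frobenius solution with the larger local index there, and similarly at $z=1$. Thus the smoothness condition on $[0,1]$ forces proportionality of these two preferred branches, and the goal is to rule this out for $\Re\lambda\geq 0$ and each admissible $(l,m)$ with $l>0$.

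First, for each $(l,m)$ I would construct an explicit quasi-solution $\psi_\ast(z;\lambda)$, typically a low-degree rational function in $z$ whose coefficients are rational in $\lambda$, satisfying two properties: it is holomorphic at $z=0$ with the correct leading exponent, and its residual $R_{l,m}:=L_{l,m}\psi_\ast$ is small in a weight suited to the local behaviour at $z=0,1,2$. Writing a candidate smooth solution as $\psi=\alpha\psi_\ast+\eta$ with $\alpha\in\C$ chosen so that $\eta$ vanishes to sufficiently high order at $z=0$, I would exhibit a second fundamental solution $\tilde\psi$ at $z=0$ (with the smaller, non-smooth Frobenius index) and use Abel's identity to obtain an explicit Wronskian $W$. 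Variation of parameters then gives the Volterra representation
\begin{equation*}
\eta(z)=-\alpha\int_0^z G(z,s)\,R_{l,m}(s;\lambda)\,ds,\qquad G(z,s)=\frac{\psi_\ast(s)\tilde\psi(z)-\tilde\psi(s)\psi_\ast(z)}{W(s)}.
\end{equation*}

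Second, I would close a contraction-type estimate in a weighted $L^\infty$-norm on $[0,1]$ tailored to the exponents at $z=0$ and $z=1$: the smallness of $R_{l,m}$ together with the explicit form of $G$ should yield $\|\eta\|_w\ll |\alpha|\,\|\psi_\ast\|_w$. An inspection of the local behaviour of $\psi_\ast$ at $z=1$ reveals that, generically for $\Re\lambda\geq 0$, it carries a logarithm or a non-integer power of $(1-z)$; the bound on $\eta$ is too small to cancel this non-smooth singular part, forcing $\alpha=0$. Once $\alpha=0$, the remainder $\eta$ is itself a solution vanishing to high order at $z=0$, and the Frobenius dichotomy then yields $\eta\equiv 0$ and therefore $\psi\equiv 0$. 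The main obstacle is making the Volterra estimate uniform across the non-compact parameter region $\Re\lambda\geq 0$ and across the infinite family of $(l,m)$ with $l>0$. The standard resolution is to split into a compact regime $|\lambda|\leq \Lambda_0$, handled by quantitative (possibly computer-assisted) bounds on the quasi-solution and on the residual, and a tail regime $|\lambda|>\Lambda_0$, in which WKB-type asymptotics for Heun solutions provide uniform control; increasing $l$ strengthens the confining part of $\tilde V_{l,m}$ and should only aid the argument rather than hurt it.
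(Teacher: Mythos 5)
Your plan is a genuinely different implementation of the ``quasi-solution'' idea than the one the paper carries out, and as written it contains a logical gap that I do not see how to repair in the form you describe.

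What the paper actually does: it expands the unique Frobenius solution that is smooth at $z=0$ as a power series $\psi(z)=\sum_n x_n z^n$, derives the standard Heun three-term recurrence for the $x_n$, and studies the coefficient ratios $r_n=x_{n+1}/x_n$, which satisfy a two-term nonlinear recursion. Poincar\'e's theorem on difference equations pins the possible limits $\lim r_n$ to $\{1/2,1\}$. The ``quasi-solution'' in the paper is a sequence $\tilde r_n$, an explicit rational function of $n,l,m,\lambda$, not an explicit function of $z$. One then bounds the relative error $e_n=r_n/\tilde r_n-1$ uniformly; the key technical moves are (i) analyticity of $a_n,b_n,e_{n_0}$ in $\lambda$ on $\{\Re\lambda\ge 0\}$ so that Phragm\'en--Lindel\"of reduces the bounds to $\lambda=it$ on the imaginary axis, (ii) explicit polynomial-in-$(n,l,t^2)$ sign checks after a variable shift, and (iii) Wall's criterion to verify the root location of the denominator of $r_{n_0}$. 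The conclusion is $\lim r_n=1$, i.e., radius of convergence exactly~$1$, so the series cannot also be analytic at $z=1$ unless $\psi\equiv 0$. This is different from your proposal both in what the quasi-solution approximates (the coefficient ratio sequence, not $\psi$ itself) and in the mechanism for the contradiction (convergence radius, not a Volterra contraction).

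The concrete gap in your write-up is the sentence claiming that $\psi_*$, a low-degree rational function of $z$, ``carries a logarithm or a non-integer power of $(1-z)$.'' A rational function in $z$ is analytic at $z=1$ (absent a pole there) and has no logarithmic or fractional-power branch point; the non-analytic behaviour that the argument needs to detect must live in the \emph{true} solution, not in the explicit quasi-solution. To make a Volterra-style argument work you would need to show that the exact Frobenius solution from $z=0$ has a nonzero projection onto the non-analytic branch at $z=1$ (whose exponent $1-\lambda$ or whose log arises when $\lambda\in\mathbb{Z}$), and the estimate $\|\eta\|_w\ll|\alpha|\,\|\psi_*\|_w$ does not by itself produce this; a small remainder can absolutely carry a singular piece of the same size as its norm. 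You would also have to handle the degenerate cases $\lambda\in\mathbb{N}_0$ at $z=1$, where both Frobenius exponents are nonnegative integers and the smoothness criterion is subtler, and you would need to explain how your contraction constant stays strictly less than one uniformly over the infinitely many $(l,m)$ and over the non-compact strip $\Re\lambda\ge 0$; ``increasing $l$ should only aid the argument'' is not a substitute for the explicit uniform bounds (with the shift trick and Phragm\'en--Lindel\"of) that the paper performs. Finally, for large $|\lambda|$ the paper does \emph{not} need WKB asymptotics at all, because after reducing to $\lambda=it$ the relevant bounds reduce to polynomial inequalities in $t^2$ with negative integer coefficients after shifting; this is much more robust than a separate asymptotic regime.
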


The remainder of this section is dedicated to the proof of Proposition~\ref{prop:heunsols}. An outline of the argument may be given as follows: We first note that $\psi$ solves a Heun equation~\eqref{heunstandard} in canonical form with regular singular points at $0, 1, 2, \infty$. Recall that the Frobenius indices at $z=0$ are given by $\{0, 1-\gamma\}$. For simplicity of notation, we will suppress the dependence of the Heun coefficients on $l,m$. It can be directly verified from equation~\eqref{equ::plm} that $1-\gamma \notin \mathbb{N}_0$ for all $l,m$ with $l>0$. Therefore there is a unique Fuchs--Frobenius solution which is smooth around the point $z=0$; let us denote this solution by $\psi(z)$. Then $\psi$ has the expansion
\begin{equation} \label{eqn:expansion}
	\psi(z) = \sum_{n=0}^{\infty} x_n z^n,
\end{equation}
which is convergent at least for $|z| \, < 1$. We aim to show that the convergence radius is indeed $1$, so that $\psi(z)$ cannot be smooth at both $z=0$ and $z=1$ unless $\psi = 0$. To this end, we define the ratio
\begin{equation}
	r_n = \frac{x_{n+1}}{x_n},
\end{equation}
so that $r=\lim_{n \rightarrow \infty} r_n$ is the inverse convergence radius of the series~\eqref{eqn:expansion}, or in other words, the series converges for $\left| z \right| < r^{-1}$. We use the inverse convergence radius for convenience here. In order to prove that $r = 1$, we use that $\psi$ solves the Heun equation in canonical form and hence the sequence $r_n$ satisfies a certain recurrence relation. Unfortunately, studying this recurrence relation directly is intractable. Instead, we construct an explicit approximate solution (or \emph{quasi-solution}) $\tilde{r}_n$ to this recurrence relation, which is then shown to be within a sufficiently small error interval around the true solution. It is then easily shown that the resulting \emph{quasi-solution} satisfies $\lim_{n \rightarrow \infty} \tilde{r}_n = 1$, and the error bound is sufficiently tight to imply that also $\lim_{n \rightarrow \infty} r_n =r = 1$.

We now derive the recurrence relation for $r_n$. First note that standard theory on the Heun equation implies that the coefficients $x_n \in \C$ of the expansion~\eqref{eqn:expansion} satisfy
\begin{equation}
	x_0 = 1, \quad a \gamma x_1 - q = 0,
\end{equation}
and for higher values of $n$ the following three term recurrence relation
\begin{equation} \label{equ::threetermrelation}
	R_n x_{n+1} - (Q_n + q) x_n + P_n x_{n-1} = 0, \quad n \geq 1,
\end{equation}
with coefficients given by 
\begin{gather}
	R_n = a (n+1) (\gamma +n),\\
	P_n = (n-1+\alpha ) (n-1+\beta), \\  
	Q_n = n ((n-1+\gamma)(1+a) +a\delta + \epsilon).
\end{gather}
Equation~\eqref{equ::threetermrelation} immediately implies that $r_n$ satisfies the recurrence relation
\begin{equation} \label{equ::recurrencern}
	r_0= \frac{q}{a \gamma} \quad \text{and} \quad r_{n} = A_n + \frac{B_n}{r_{n-1}}, \quad n \geq 1,
\end{equation}
with the coefficients
\begin{equation}
	A_n = \frac{Q_n+q}{R_n}, \quad B_n = - \frac{P_n}{R_n}
\end{equation}
These coefficients may be computed explicitly for each value of $(l,m)$ and we find
\begin{equation} \label{equ::recursioncoeffA}
	A_n = \begin{dcases*}
		\frac{\lambda ^2+12 \lambda +12 n^2+8 (\lambda +4) n+12}{4 \left(2 n^2+9 n+7\right)} \quad & $(l,m)=(1,0)$ \\
		\frac{\lambda ^2+12 \lambda +12 n^2+8 \lambda  n+28 n+7}{8 n^2+36 n+28} \quad & $(l,m)=(1,1)$ \\
		\frac{\lambda ^2+16 \lambda +12 n^2+8 \lambda  n+44 n+27}{8 n^2+44 n+36} \quad & $(l,m)=(2,1)$
	\end{dcases*}
\end{equation}
and in the remaining cases,
\begin{equation}
	A_n = \frac{\lambda ^2+4 \lambda +l^2+2 l (2 \lambda +6 n+1)+2 m^2+2 m+12 n^2+8 \lambda  n+8 n-12}{4 (n+1) (2 l+2 n+3)}.
\end{equation}
For the coefficients $B_n$ one finds
\begin{equation} \label{equ::recursioncoeffB}
	B_n = \begin{dcases*}
		-\frac{(\lambda +2 n) (\lambda +2 n+2)}{4 (n+1) (2 n+7)} \quad  &$(l,m)=(1,0)$  \\
		-\frac{(\lambda +2 n-1) (\lambda +2 n+1)}{4 (n+1) (2 n+7)} \quad &$(l,m)=(1,1)$  \\
		-\frac{(\lambda +2 n+1) (\lambda +2 n+3)}{4 (n+1) (2 n+9)} \quad &$(l,m)=(2,1)$  \\
		-\frac{(\lambda +l+2 n-4) (\lambda +l+2 n+2)}{4 (n+1) (2 l+2 n+3)}  \quad &\text{other values}
	\end{dcases*}.
\end{equation}
We note that the coefficients $A_n, B_n$ for $(l,m)=(1,0)$ agree with the coefficients arising in~\cite{glogic2}, as expected since the case $(l,m)=(1,0)$ represents the co-rotational case. In the remainder of this section, we will prove that $r_n \rightarrow 1$ for all $l,m$ under consideration.

\subsection{Restraining the set of possible convergence radii}
As a first step towards proving that $r=\lim_{n \rightarrow \infty} r_n =1$ , we first prove that the limit can only equal $1$ or $\frac{1}{2}$. Since $r$ is the inverse convergence radius, this statement amounts to the statement that the solution~\eqref{eqn:expansion} is either singular at $z=1$ (in case $r=1$), or regular at $z=1$ but singular at $z=2$ (in case $r=\frac{1}{2}$).

\begin{lemma} \label{lem:poincare}
Let $l>0$ and $m \in \{l-1,l,l+1\}$. Then the sequence $r_n$ defined in equation~\eqref{equ::recurrencern} is convergent and $\lim_{n \rightarrow \infty} r_n \in \{ \frac{1}{2},1 \}$.
\end{lemma}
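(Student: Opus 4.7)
The plan is to recognise the three-term recurrence~\eqref{equ::threetermrelation} as a small perturbation of a constant-coefficient linear recurrence and apply the classical Poincar\'e--Perron theorem. The argument should proceed in three short steps.

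First, I would compute the limits of the coefficients $A_n$ and $B_n$ appearing in the nonlinear recurrence $r_n = A_n + B_n/r_{n-1}$. Direct inspection of the explicit formulas~\eqref{equ::recursioncoeffA} and~\eqref{equ::recursioncoeffB} shows that in every case with $l>0$ and $m \in \{l-1,l,l+1\}$, the numerator and denominator of $A_n$ are polynomials in $n$ of degree two with leading coefficients $12$ and $8$, while those of $-B_n$ have leading coefficients $4$ and $8$. Hence $A := \lim_{n\to\infty} A_n = \tfrac{3}{2}$ and $B := \lim_{n\to\infty} B_n = -\tfrac{1}{2}$, uniformly across all four subcases and independently of $\lambda$.

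Second, I would consider the limiting constant-coefficient recurrence $x_{n+1} = A x_n + B x_{n-1}$, whose characteristic polynomial
\begin{equation}
t^2 - \tfrac{3}{2} t + \tfrac{1}{2} \;=\; \tfrac{1}{2}(2t-1)(t-1)
\end{equation}
has the two roots $t_1 = \tfrac{1}{2}$ and $t_2 = 1$, which are distinct and of distinct modulus. By Perron's theorem for three-term recurrences with convergent coefficients, there exist two linearly independent solutions $\phi^{(1)}_n, \phi^{(2)}_n$ of~\eqref{equ::threetermrelation}, both eventually nonzero, satisfying $\phi^{(i)}_{n+1}/\phi^{(i)}_n \to t_i$ as $n \to \infty$. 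The sequence $(x_n)$ arising from the unique Frobenius solution that is smooth at $z=0$ then decomposes as $x_n = c_1 \phi^{(1)}_n + c_2 \phi^{(2)}_n$ for some $(c_1,c_2) \neq (0,0)$. Since $|t_1| < |t_2|$, the dominant term dictates the asymptotic regime: if $c_2 \neq 0$ then $x_n \sim c_2 \phi^{(2)}_n$ and hence $r_n \to 1$, whereas if $c_2 = 0$ then $x_n = c_1 \phi^{(1)}_n$ and $r_n \to \tfrac{1}{2}$. This yields the claimed dichotomy $\lim_{n\to\infty} r_n \in \{\tfrac{1}{2}, 1\}$.

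The only subtlety I anticipate is verifying that $r_n$ is well-defined for all large~$n$, i.e.\ that $x_n \neq 0$ eventually; this is handed to us essentially for free by the Perron decomposition, since each $\phi^{(i)}_n$ is eventually nonzero and any nontrivial linear combination whose asymptotics are controlled by exactly one root must itself be eventually nonzero. I therefore do \emph{not} expect significant difficulty in Lemma~\ref{lem:poincare} itself: the genuinely hard task, which will be carried out in the subsequent subsections by the quasi-solution method, is to \emph{discriminate} between the two permissible limits $\tfrac{1}{2}$ and $1$ by constructing an explicit approximate solution $\tilde{r}_n$ of~\eqref{equ::recurrencern} that tends to $1$ and quantitatively controlling $|r_n - \tilde{r}_n|$.
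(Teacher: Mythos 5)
Your proposal is correct and follows essentially the same route as the paper. Both compute the limits $A_n \to \tfrac32$, $B_n \to -\tfrac12$, pass to the characteristic polynomial $t^2 - \tfrac32 t + \tfrac12$ with roots $1$ and $\tfrac12$, and apply a Poincar\'e-type limit theorem; the only difference is that the paper invokes Poincar\'e's theorem directly and rules out the degenerate ``eventually zero'' alternative by a separate backward-induction argument (using that $x_0 = 1$ and the recurrence coefficients never vanish), whereas you invoke the slightly stronger Perron theorem, which supplies a fundamental system with prescribed asymptotic ratios and makes the eventual non-vanishing of $x_n$ automatic. These are equivalent in substance.
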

\begin{proof}
The proof is essentially an application of Poincar\'{e}'s theorem on recurrence relations~\cite[Theorem 8.9]{recurrencerelations}. We recall the theorem here: Consider the $k$-th order linear difference equation with complex coefficients $p^i_n \in \C$ given by
\begin{equation} \label{equ::lineardiffequ}
	x_{n+k} + p^1_n \: x_{n+k-1} + \dots + p^k_n \: x_n = 0.
\end{equation}
We assume that for each coefficient, $\lim_{n \rightarrow \infty} p^i_n = p^i \in \R$ exists and is real. Then the characteristic equation associated to~\eqref{equ::lineardiffequ} is given by
\begin{equation}
	t^k + p_1 t^{k-1} + \dots + p_k = 0,
\end{equation}
and its roots $t_1, \dots, t_k \in \C$ are called the characteristic roots of equation~\eqref{equ::lineardiffequ}. Under these assumptions, the Poincar\'{e} recurrence theorem states that if $x_n$ is a solution to~\eqref{equ::lineardiffequ} and assume $|t_i| \, \neq |t_j|$ for $i \neq j$, then either
\begin{equation}
	\lim_{n \rightarrow \infty} \frac{x_{n+1}}{x_n} = t_i,
\end{equation}
for some characteristic root $t_i$ or $x_n = 0$ eventually. To show that the assumptions of the theorem hold, we first note that the limits of the coefficients of the recursion relation for all $l,m$ are given by
\begin{equation}
	\lim_{n \rightarrow \infty} A_n = \frac{3}{2}, \quad \lim_{n \rightarrow \infty} B_n = -\frac{1}{2},
\end{equation}
as may be observed by inspection of equations~\eqref{equ::recursioncoeffA} and~\eqref{equ::recursioncoeffB}. Therefore the characteristic equation of the recursion relation~\eqref{equ::recurrencern} is
\begin{equation}
	t^2-\frac{3}{2}t + \frac{1}{2} = 0.
\end{equation}
This equation has the two roots $t_0=1$ and $t_1=\frac{1}{2}$. Finally, we establish that there cannot exist a $N$ so that for $n \geq N$, $x_n=0$. Suppose there did. Since the coefficients in the recursion relation never vanish, we find that if $x_{n+1},x_{n}=0$ also $x_{n-1}=0$. By backwards induction we therefore conclude that $x_0=0$ which is a contradiction since $x_0=1$. Therefore we have a nowhere vanishing solution. Therefore, an application of the Poincar\'{e} recurrence theorem proves the claim.
\end{proof}

\subsection{Constructing the quasisolutions}

An exact closed-form solution to the recurrence relation~\eqref{equ::recurrencern} does not seem plausible to the authors, so that instead, we aim to construct an approximative solution or \emph{quasisolution} $\tilde{r}_n$ with the properties that
\begin{gather}
	\lim_{n \rightarrow \infty} \tilde{r}_n = 1, \label{ equ:quasisoln1} \\
	 \limsup_{n \rightarrow \infty} \Big| \, \frac{r_n}{\tilde{r}_n}-1 \, \Big| \, < \frac{1}{2 }. \label{equ:quasisoln2}
\end{gather}
Assuming the existence of such a quasisolution $\tilde{r}_n$, this would immediately imply $\lim_{n \rightarrow \infty} r_n = 1$ and therefore conclude the proof of Proposition~\ref{prop:heunsols}. We compute the quasisolutions by carefully considering the asymptotic behaviour of the sequence $r_n$ as $\lambda \rightarrow \infty$ and $l \rightarrow \infty$. The approximations we computed are given in Table~\ref{table_approximations}. Note that for the corotational case $(l,m)=(1,0)$ we use the approximation computed in~\cite{glogic2}. With the explicit form of $\tilde{r}_n$ given in Table~\ref{table_approximations}, it is a trivial exercise to compute the limit as $n \rightarrow \infty$.

\begin{lemma} \label{lem:quasisoln_limit}
For all $(l,m)$ with $l>0$ we have $\lim_{n \rightarrow \infty} \tilde{r}_n = 1$.
\end{lemma}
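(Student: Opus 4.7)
The proof is essentially a direct computation once the explicit quasi\-solutions $\tilde r_n$ from the table are in hand. The plan is to go through each of the cases
\[
(l,m) \in \{(1,0),(1,1),(2,1)\} \cup \{(l,m) : l \geq 2, \; m \in \{l-1,l,l+1\}\}
\]
separately, and in each case show that $\tilde r_n$ is a rational function of $n$ (with parameters $\lambda,l,m$ appearing only in the coefficients) whose numerator and denominator are of the same degree in $n$ and whose leading coefficients coincide. The limit $\lim_{n \to \infty} \tilde r_n = 1$ then follows from standard limit arithmetic for rational functions.

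Concretely, I would first recall that Lemma~\ref{lem:poincare} identified $t_0 = 1$ and $t_1 = \tfrac12$ as the only possible limit points of $r_n$, and that the quasi\-solutions were constructed precisely by expanding $r_n$ in powers of $1/n$ (and in the relevant large-parameter asymptotics) around the dominant root $t_0 = 1$. Thus the formulas in the table are, by construction, of the form
\[
\tilde r_n = 1 + \frac{c_1(\lambda,l,m)}{n} + \frac{c_2(\lambda,l,m)}{n^2} + \cdots
\]
up to some finite order, either written as a truncated asymptotic expansion or equivalently repackaged as a ratio of polynomials in $n$ with equal leading coefficients. Reading off the leading term from each entry of the table and verifying this structure is immediate.

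For each case, I would just carry out the elementary calculation: divide numerator and denominator of $\tilde r_n$ by the highest power of $n$ appearing, and observe that all terms with negative powers of $n$ vanish in the limit, leaving $1$. Since the parameters $\lambda,l,m$ only enter through bounded coefficients (fixed for each application, as $l,m$ are fixed and $\lambda$ is a fixed complex number with $\operatorname{Re}\lambda \geq 0$), they do not affect the limit. I do not anticipate any genuine obstacle here; the only minor bookkeeping issue is the case distinction in the table (three special cases plus the generic one), but each is handled identically by inspection.
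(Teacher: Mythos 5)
Your proposal is correct and matches the paper, which dispatches this lemma as a ``trivial exercise'': for each row of Table~\ref{table_approximations} one checks directly that the terms involving $\lambda^2$ and $\lambda$ have denominators of strictly higher degree in $n$ (hence vanish as $n\to\infty$ for fixed $\lambda,l,m$), and the remaining term is a ratio of degree-one polynomials in $n$ with equal leading coefficients, so it tends to $1$. The appeal to Lemma~\ref{lem:poincare} and to how the quasisolutions ``were constructed'' is unnecessary decoration and, strictly speaking, slightly mischaracterizes the construction (the paper fits rational functions in $n$ numerically after studying $\lambda\to\infty$ and $l\to\infty$ asymptotics, it does not expand the true $r_n$ around the root $1$) --- but none of that is needed, since the lemma is just a direct computation on the explicit formulas.
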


\begin{remark}
The details of how the quasisolutions listed in Table~\ref{table_approximations} are obtained are irrelevant to the proof. However, for the sake of completeness, we provide the reader with some details of how these approximations were obtained. For further details on the computation of quasi-solutions in general, we refer the reader to~\cite{glogic2}.

We begin by studying the behaviour of the sequence $r_n$ as $\lambda \rightarrow \infty$. We observe that $r_n$ is a rational function in $\lambda$ of order two, which leads us to guess that $r_n \sim_{n,l,m} \lambda^2$ in the limit as $\lambda \rightarrow \infty$. Indeed by dividing equation~(\ref{equ::recurrencern}) by $\lambda^2$ it is easy to see that $\lambda^{-2} r_n \sim \lambda^{-2} A_n$ as $\lambda \rightarrow \infty$. This provides the coefficient of $\lambda^2$ in the approximation. Similarly, one can obtain the term linear in $\lambda$. We repeat this procedure to obtain the behaviour of $r_n$ as $l \rightarrow \infty$.

In order to capture the behaviour as a function of $n$, we distinguish the three cases where $m=l-1, l, l+1$. We then set $\lambda=0$ and consider the lowest relevant value of $l$. We may now numerically compute the sequence $r_n$ for these special values of $\lambda$ and $l$ for $n=1, \dots 50$. The approximation is completed by fitting a rational function in $n$ with integer coefficients to this sequence. 

The choice of this fit is somewhat delicate in practice, since it interacts with the resulting error bounds. For this reason, we had to separate the case $(l,m) = (l,l)$, $l \geq 2$ into two subcases, namely $(l,m)=(2,2)$ and $(l,m)=(l,l)$ with $l \geq 3$ and find separate approximations for each case. Similarly, we had to divide the case $(l,m) = (l,l+1)$, $l \geq 1$ into the three subcases $(l,m) = (1,2)$, $(l,m) = (2,3)$ and $(l,m) = (l,l+1)$ with $l \geq 3$.
\end{remark}

\renewcommand{\arraystretch}{1.6}
\begin{table}[ht]
	\caption{Explicit form of the quasisolutions to recurrence relation~\eqref{equ::recurrencern}. The value of $(l,m)$ is provided in the first column. In the last three rows, we consider the cases $(l,m)$ where $l \geq 3$.}\label{table_approximations}%
	\begin{tabular}{@{}lc@{}}
		\toprule
		$(l,m)$ & Quasisolution $\tilde{r}_n$ \\
		\midrule
		$(1,0)$ & $\frac{\lambda^2}{8 n^2+36 n+28} + \frac{\lambda (2 n+3) }{2 n^2+9 n+7} + \frac{2 n+4}{2 n+7}$ \\
        $(1,1)$ & $\frac{\lambda ^2}{8 n^2+36 n+28}+\frac{\lambda  (2 n+3)}{2 n^2+9 n+7}+\frac{15 n+15}{15 n+40}$ \\
        $(1,2)$ & $\frac{\lambda ^2}{8 n^2+28 n+20}+\frac{\lambda  (2 n+2)}{2 n^2+7 n+5}+\frac{2 n+12}{2 n+14}$ \\
        $(2,1)$ & $\frac{\lambda ^2}{8 n^2+44 n+36}+\frac{\lambda  (2 n+4)}{2 n^2+11 n+9}+\frac{2 n+9}{2 n+12}$ \\
        $(2,2)$ & $\frac{\lambda ^2}{8 n^2+20 n+2 (8 n+8)+12}+\frac{\lambda  (2 n+3)}{2 n^2+5 n+2 (2 n+2)+3}+\frac{6 n+30}{6 n+35}$ \\
        $(2,3)$ & $\frac{\lambda ^2}{8 n^2+20 n+2 (8 n+8)+12}+\frac{\lambda  (2 n+3)}{2 n^2+5 n+2 (2 n+2)+3}+\frac{4 n+42}{4 n+47}$ \\
        $(l \geq 3,l-1)$ & $\frac{\lambda ^2}{l (8 n+8)+8 n^2+20 n+12}+\frac{\lambda  (l+2 n+1)}{l (2 n+2)+2 n^2+5 n+3}+\frac{3 (l-3)}{8 n+8}+\frac{6 n+11}{6 n+20}$ \\
        $(l \geq 3,l)$ & $\frac{\lambda ^2}{l (8 n+8)+8 n^2+20 n+12}+\frac{\lambda  (l+2 n+1)}{l (2 n+2)+2 n^2+5 n+3}+\frac{3 (l-2)}{8 n+8}+\frac{n+4}{n+6}$ \\
        $(l \geq 3,l+1)$ & $\frac{\lambda ^2}{l (8 n+8)+8 n^2+20 n+12}+\frac{\lambda  (l+2 n+1)}{l (2 n+2)+2 n^2+5 n+3}+\frac{3 (l-1)}{8 n+8}+\frac{2 n+11}{2 n+15}$ \\
		\botrule
	\end{tabular}
\end{table}
\renewcommand{\arraystretch}{1.0}

\subsection{Bounding the relative error}
Having provided the explicit form of the quasisolutions $\tilde{r}_n$ and established that they converge to $1$, we now want to control the size of the relative error
\begin{equation}
	e_n = \frac{r_n}{\tilde{r}_n} - 1.
\end{equation}
First we note the following that if $r_n$ satisfies~(\ref{equ::recurrencern}) then for any choice of $\tilde{r}_n \in \C \setminus \{ 0 \}$ the relative error satisfies the recurrence relation
\begin{equation} \label{equ:anbn}
	e_{n} = a_n + b_n \frac{e_{n-1}}{1+e_{n-1}}, \qquad n \geq 1
\end{equation}
with
\begin{equation} \label{equ:anbn2}
	a_n = \frac{A_n \tilde{r}_{n-1}+B_n}{\tilde{r}_{n-1} \tilde{r}_{n}} -1, \quad b_n = - \frac{B_n}{\tilde{r}_{n-1} \tilde{r}_{n}}.
\end{equation}
The explicit form of the coefficients $a_n, b_n$ is quite complicated, so we only provide it in a digital format, see Appendix~\ref{sec::github}. We begin our analysis of the relative error by establishing upper bounds for the coefficients $a_n, b_n$.

\renewcommand{\arraystretch}{1.6}
\begin{table}[ht]
	\caption{Upper bounds for the coefficients $a_n$ and $b_n$ defined in equation~\eqref{equ:anbn}. The value of $(l,m)$ is provided in the first column. In the last row we assume $l \geq 3$ and in the antepenultimate and penultimate row we assume $l \geq 4$.}\label{table_bounds}%
	\begin{tabular}{@{}lllll@{}}
		\toprule
		$(l,m)$ & $\bar{a}_n$ & $\bar{b}_n$  & $n_0$ & $u$ \\
		\midrule
		$(1,1)$ & $\frac{72 + 125 n}{300 (-3 + 5 n)}$  & $\frac{-11 + 16 n}{4 (-1 + 8 n)}$   & $2$  & $\frac{3}{10}$  \\
		$(1,2)$ & $\frac{75 n+266}{150 (6 n+1)}$  & $\frac{25 n-11}{50 (n+1)}$    & $4$  & $\frac{1}{3}$  \\
		$(2,1)$ & $\frac{-71 + 100 n}{300 (-5 + 4 n)}$ & $\frac{-37 + 50 n}{25 (-1 + 4 n)}$   & $2$  & $\frac{3}{10}$  \\
		$(2,2)$ &  $\frac{125 n+482}{300 (5 n+2)}$ & $\frac{400 n-179}{100 (8 n+11)}$   & $3$  & $\frac{3}{10}$  \\
		$(2,3)$ & $\frac{5 n+12}{60 n}$ & $\frac{125 n-96}{50 (5 n+1)}$  & $2$ & $\frac{3}{10}$  \\
		$(3,2)$ & $\frac{125 n-121}{300 (5 n-7)}$ & $\frac{400 n-319}{100 (8 n-3)}$    & $2$ & $\frac{3}{10}$  \\
		$(3,3)$ & $\frac{800 n-443}{600 (16 n-19)}$ & $\frac{104 n-133}{8 (26 n-27)}$    & $3$ & $\frac{3}{10}$  \\
        $(l \geq 4,l-1)$ & $\frac{-1016 + 272 l + 125 n}{300 (-23 + 5 l + 5 n)}$ & $\frac{-63 + 11 l + 20 n}{20 (-9 + 2 l + 2 n)}$    & $2$ & $\frac{3}{10}$  \\
        $(l \geq 4,l)$ & $\frac{-2810 + 887 l + 512 n}{48 (-515 + 128 l + 128 n)}$ & $\frac{-9842 + 2071 l + 2800 n}{200 (-113 + 28 l + 28 n)}$    & $2$ & $\frac{3}{10}$  \\
        $(l \geq 3,l+1)$ & $\frac{-27 + 10 l + 4 n}{12 (-15 + 4 l + 4 n)}$ & $\frac{-29 + 5 l + 13 n}{2 (-45 + 13 l + 13 n)}$    & $2$ & $\frac{3}{10}$  \\
		\botrule
	\end{tabular}
\end{table}
\renewcommand{\arraystretch}{1.0}

\begin{lemma} \label{lem::bounds}
Let $\Re \lambda \geq \modebound$ and let the pair $(l,m)$ be such that $l > 0$. If $n_0, \bar{a}_n, \bar{b}_n$ and $u$ are chosen according to Table~\ref{table_bounds} depending on the value of $(l,m)$, then
\begin{gather}
	\left| a_n \right| \leq \bar{a}_n, \\
	\left| b_n \right| \leq \bar{b}_n,
\end{gather}
for $n \geq n_0$ and in addition,
\begin{equation}
	\left| e_{n_0} \right| \leq u.
\end{equation}
\end{lemma}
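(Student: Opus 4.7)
The lemma is a case-by-case verification, with ten cases corresponding to the ten rows of Table~\ref{table_bounds}. Within each case, three estimates must be established: the pointwise bounds $|a_n| \leq \bar a_n$ and $|b_n| \leq \bar b_n$ for $n \geq n_0$, and the base-case bound $|e_{n_0}| \leq u$. The general strategy is direct computation, since the coefficients $a_n, b_n$ are explicit rational functions of $n$, $l$, and $\lambda$ obtained by substituting the quasi-solutions from Table~\ref{table_approximations} into \eqref{equ:anbn2}.

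My plan is as follows. First, for each case I would write $a_n$ as a single rational function $a_n = P(n,l,\lambda)/Q(n,l,\lambda)$, where the cancellations in the numerator reflect the fact that $\tilde r_n$ was designed so that $A_n \tilde r_{n-1} + B_n$ is close to $\tilde r_{n-1}\tilde r_n$. To bound $|a_n|$ uniformly over $\Re\lambda \geq 0$, I would first establish that $Q(n,l,\lambda)$ is bounded away from zero on the closed right half-plane: this is straightforward because $Q$ is essentially $\tilde r_{n-1}\tilde r_n$ multiplied by a polynomial in $n,l$, and inspecting Table~\ref{table_approximations} one sees that $\tilde r_n$ is a polynomial in $\lambda$ whose coefficients are strictly positive for $n\geq n_0$, so that $|\tilde r_n(\lambda)|$ is minimised on $\Re\lambda \geq 0$ along the imaginary axis, where a direct modulus computation provides an explicit positive lower bound. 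Once the denominator is controlled, $a_n$ is holomorphic on the closed right half-plane and bounded at infinity, since by construction the leading $\lambda^2$ coefficients of numerator and denominator match. An application of the Phragm\'en--Lindel\"of principle (equivalently, the maximum modulus principle after the conformal map $\lambda \mapsto (\lambda-1)/(\lambda+1)$ of the half-plane to the disk) then reduces the verification to a real inequality in $t = \Im\lambda$, $n$, and $l$, which after clearing denominators becomes a polynomial inequality solved by routine manipulation.

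The treatment of $|b_n| \leq \bar b_n$ is analogous but simpler: $b_n = -B_n/(\tilde r_{n-1}\tilde r_n)$ and $B_n$ is an explicit quadratic polynomial in $\lambda$ given by \eqref{equ::recursioncoeffB}, so the bound follows immediately from the lower bound on $|\tilde r_n|$ already established and the same Phragm\'en--Lindel\"of reduction. For the base case $|e_{n_0}| \leq u$, I would iterate the recurrence \eqref{equ::recurrencern} the finite number of times $n_0$ (at most four) starting from $r_0 = q/(a\gamma)$, obtaining an explicit rational expression for $r_{n_0}$ in $\lambda$ and $l$. Forming $e_{n_0} = r_{n_0}/\tilde r_{n_0} - 1$ and bounding it on $\Re\lambda \geq 0$ by the same method completes the case.

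The main obstacle is not conceptual but rather the size and heterogeneity of the resulting rational expressions: the polynomials $P$ and $Q$ in $(n,l,\lambda)$ are unwieldy, and in several subcases the approximations are only barely tight enough, forcing a separation into small-$l$ subcases (as already reflected in the structure of Tables~\ref{table_approximations} and~\ref{table_bounds}) and some care in choosing the value of $n_0$. For this reason the verification is computer-algebra assisted; the explicit forms of $a_n$ and $b_n$, together with the symbolic verification of the final polynomial inequalities, are deferred to the supplementary digital material referenced in Appendix~\ref{sec::github}.
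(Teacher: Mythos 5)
Your proposal matches the paper's proof in outline: establish analyticity of the relevant quantities on the closed right half-plane, apply Phragm\'en--Lindel\"of to reduce the estimates to the imaginary axis $\lambda = it$, and then verify a polynomial inequality in $t^2$, $n$, $l$ after clearing denominators and shifting variables so that all coefficients become manifestly negative, with the heavy algebra deferred to the supplementary digital material. The one step you gloss over concerns $r_{n_0}$. Iterating the recurrence $n_0$ times produces a rational function of $\lambda$ whose denominator is a polynomial of degree up to $2n_0$ (up to $8$ in the case $(l,m)=(1,2)$), and before Phragm\'en--Lindel\"of can be applied to $e_{n_0} = r_{n_0}/\tilde r_{n_0} - 1$ one must show that this denominator has no zeros in $\Re\lambda\geq 0$. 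For the quasi-solution $\tilde r_n$ this is easy: it is a quadratic in $\lambda$ and the paper simply exhibits its two negative real roots. But your phrase ``by the same method'' suggests you would treat the denominator of $r_{n_0}$ likewise, and there the degree is too high for root inspection to be routine. The paper instead invokes Wall's criterion for Hurwitz stability, expanding the odd part $\hat{d}_{n_0}$ of the denominator $d_{n_0}$ as a continued fraction $\hat{d}_{n_0}/d_{n_0}$ and verifying all coefficients are positive; some such stability test (Wall, Routh--Hurwitz, or equivalent) is needed here, and your sketch does not supply one. Apart from this omission the proposal is essentially the paper's argument.
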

\begin{proof}
Let us choose $n_0$ as in Table~\ref{table_bounds}. We first prove that the coefficients $a_n, b_n$ as well as $e_{n_0}$ are analytic functions of the growth rate $\lambda$ when $\Re \lambda \geq 0$.
	
	\begin{claim*}
		$\tilde{r}_n^{-1}$ is analytic as a function of $\lambda$ in the domain $\lambda \in \{ z \in \C : \Re z \geq 0 \}$ for all $n \geq 1$.
	\end{claim*}
	\noindent
	This follows simply by observing that $\tilde{r}_n$ is a quadratic polynomial in $\lambda$ for fixed $n,l,m$, computing the roots of this polynomial and checking that they lie in $\{ z \in \C : \Re z < 0 \}$ for each $n$ and for each of the different parameters $l$ and $m$. Indeed, both roots are negative real numbers in all cases. Since the explicit expressions are a bit cumbersome, we carry the calculation out in Appendix~\ref{sec::zeroes}.
	
	\begin{claim*}
		$r_{n_0}$ is analytic as a function of $\lambda$ in the domain $\lambda \in \{ z \in \C : \Re z \geq 0 \}$.
	\end{claim*}
	\noindent
	By explicitly computing $r_{n_0}$ we observe that it is a rational function of $\lambda$. Therefore we need to show that its denominator has its roots contained in $\{ \lambda \in \C : \Re \lambda < 0 \}$. This is easily done by using Wall's criterion, see Appendix~\ref{sec::wall}.

    \vspace{1em}
    \noindent
    From the explicit form~(\ref{equ::recursioncoeffA}),~(\ref{equ::recursioncoeffB}) of $A_n$ and $B_n$ provided above it is clear that $\lambda$ only occurs in the numerator and it does so polynomially. In particular $A_n$ and $B_n$ are analytic as functions of $\lambda \in \C$. It therefore follows immediately that $a_n$ and $b_n$ are analytic as functions of $\lambda \in \{ z \in \C : \Re z \geq 0 \}$. Similarly, the analyticity of $r_{n_0}$ and $\tilde{r}_n^{-1}$ implies that of $e_{n_0}$. According to the Phragmen-Lindel\"{o}f principle, it therefore suffices to show the required bounds for $\lambda$ on the imaginary line $\{z \in \C : \Re z = 0 \}$ in order to obtain them for the domain $\{ z \in \C : \Re z \geq 0 \}$.
	
	Now suppose $\lambda=  i t$, $t \in \R$. We demonstrate the argument on the example of $a_n$ for the case $(l,m)=(l,l-1)$, $l \geq 4$, but the argument is identical for $b_n$, for $e_{n_0}$ and all other cases. We explicitly compute $|a_n|^2(i t)$ and note that this is a rational function of $t^2$ whose coefficients are polynomials in $n$ and $l$ with only integer coefficients, say $|a_n|^2(i t) = \frac{F(t^2)}{G(t^2)}$. Observe that all of our bounds are also rational functions of $n$ and $l$ with integer coefficients. Therefore to show $|a_n| \, \leq \frac{x}{y}$, say, we can equivalently show $F y^2 - G x^2 \leq 0$. This expression is again a polynomial in $t^2$ with coefficients that are polynomials in $n$ and $l$. When we shift the variables $n \mapsto n + 2$ and $l \mapsto l+4$, we then find that all of the non-zero coefficients appearing here are negative integers, therefore demonstrating the negativity of $F y^2 - G x^2$ for $n \geq 2$ and $l \geq 4$. Since the explicit expressions are cumbersome, we provide the explicit form of the polynomial $F y^2 - G x^2$ for all cases of $(l,m)$ and for $a_n$, $b_n$ and $e_{n_0}$ in a digital format, see Appendix~\ref{sec::github}.
\end{proof}

\begin{lemma} \label{lem:relerrorbound}
Let $\Re \lambda \geq \modebound$ and let the pair $(l,m)$ be such that $l > 0$. Then 
\begin{equation}
    \left| e_n \right| \leq \frac{1}{3},
\end{equation}
for all $n \geq n_0$, where $n_0$ is chosen as in Table~\ref{table_bounds}.
\end{lemma}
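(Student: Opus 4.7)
The plan is to prove the lemma by a straightforward induction on $n \geq n_0$, using the recurrence \eqref{equ:anbn} together with the bounds from Lemma~\ref{lem::bounds}. The base case $n = n_0$ is immediate since Lemma~\ref{lem::bounds} gives $|e_{n_0}| \leq u \leq \tfrac{1}{3}$ in every case of Table~\ref{table_bounds} (indeed $u = \tfrac{3}{10}$ or $u = \tfrac{1}{3}$ throughout).

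For the inductive step, assume $|e_{n-1}| \leq \tfrac{1}{3}$. Then by the reverse triangle inequality $|1 + e_{n-1}| \geq 1 - |e_{n-1}| \geq \tfrac{2}{3}$, and since $t \mapsto t/(1-t)$ is increasing on $[0,1)$ we obtain
\begin{equation}
    \left| \frac{e_{n-1}}{1+e_{n-1}} \right| \leq \frac{|e_{n-1}|}{1-|e_{n-1}|} \leq \frac{1}{2}.
\end{equation}
Applying this to \eqref{equ:anbn} together with Lemma~\ref{lem::bounds} yields
\begin{equation}
    |e_n| \leq |a_n| + |b_n| \left| \frac{e_{n-1}}{1+e_{n-1}} \right| \leq \bar{a}_n + \tfrac{1}{2} \bar{b}_n,
\end{equation}
so the induction closes provided one verifies the rational inequality $\bar{a}_n + \tfrac{1}{2} \bar{b}_n \leq \tfrac{1}{3}$ for every $n \geq n_0 + 1$ in each of the ten rows of Table~\ref{table_bounds}.

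The main obstacle, and indeed essentially the whole content of the argument, is establishing these ten explicit rational inequalities. I would handle them by the same polynomial-positivity device used in the proof of Lemma~\ref{lem::bounds}: cross-multiply to rewrite $\bar{a}_n + \tfrac{1}{2}\bar{b}_n - \tfrac{1}{3} \leq 0$ as $P(n,l) \leq 0$ where $P$ is a polynomial in $n$ (and $l$, when relevant) with integer coefficients, then perform an integer shift $n \mapsto n + n_0 + 1$ (and, in the $l$-dependent rows, $l \mapsto l + l_0$ with $l_0$ matching the range in the table) to reduce to checking that all non-zero coefficients of the shifted polynomial are negative. A quick asymptotic sanity check is reassuring: in the typical case $(l,m) = (1,1)$ one has $\bar{a}_n \to \tfrac{1}{12}$ and $\bar{b}_n \to \tfrac{1}{2}$ as $n \to \infty$, giving $\bar{a}_n + \tfrac{1}{2}\bar{b}_n \to \tfrac{1}{3}$, so the bound is asymptotically saturated — this indicates that the choice of quasisolution and of the threshold $\tfrac{1}{3}$ are tight, and that one must be careful with the small-$n$ checks but that no improvement of the induction hypothesis is required.

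Once Lemma~\ref{lem:relerrorbound} is established, combining it with Lemma~\ref{lem:quasisoln_limit} gives $\limsup_{n \to \infty} |r_n/\tilde{r}_n - 1| \leq \tfrac{1}{3} < \tfrac{1}{2}$, so that $\liminf_n r_n \geq \tfrac{1}{2} \lim_n \tilde{r}_n = \tfrac{1}{2}$; together with Lemma~\ref{lem:poincare} this forces $\lim_n r_n = 1$, excluding the spurious limit $\tfrac{1}{2}$ and completing the proof of Proposition~\ref{prop:heunsols}.
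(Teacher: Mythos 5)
Your proposal is correct and coincides with the paper's proof: both arguments run the induction $|e_{n-1}|\leq \tfrac{1}{3}\Rightarrow |e_n|\leq \bar a_n + \tfrac{1}{2}\bar b_n \leq \tfrac{1}{3}$, with the base case from Lemma~\ref{lem::bounds} and the ten rational inequalities verified by the same cross-multiply-and-shift polynomial-positivity device. (Minor quibble in your closing remark: from $\limsup|e_n|\leq\tfrac13$ one gets $\liminf r_n \geq \tfrac{2}{3}\lim\tilde r_n$, not $\tfrac12\lim\tilde r_n$; but the essential point — that $\lim r_n=\tfrac12$ would force $\lim|e_n|=\tfrac12>\tfrac13$, a contradiction — is exactly the paper's argument for Proposition~\ref{prop:heunsols}.)
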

\begin{proof}
Note that for $x \in \C, y \in \R$ with $| x | \, \leq y < 1$
\begin{equation}
\left\lvert \frac{x}{1+x} \right\rvert \leq \frac{|x|}{1-|x|} \leq \frac{y}{1-y}.
\end{equation}
Let us assume that we have made a choice of $0< y < 1$ and that we have obtained bounds of the form $|a_n| \, \leq \bar{a}_n$ and $|b_n| \, \leq \bar{b}_n$. Then in order to close the argument we see that the crucial property is that $\bar{a}_n, \bar{b}_n$ and $y$ satisfy
\begin{equation}
\bar{a}_n + \bar{b}_n \frac{y}{1-y} \leq y,
\end{equation}
or equivalently
\begin{equation}
    y^2 + (\bar{b}_n- \bar{a}_n - 1) y + \bar{a}_n <0.
\end{equation}
The reader may convince themselves that for all values of $(l,m)$, the value $y=\frac{1}{3}$ satisfies this condition for all $n \geq n_0$ and our choice of bounds $\bar{a}_n$ and $\bar{b}_n$.
\end{proof}

The results of this section are readily combined to prove Proposition~\ref{prop:heunsols}.

\begin{proof}[Proof of Proposition~\ref{prop:heunsols}]
We begin by noting that Lemma~\ref{lem:poincare} together with Lemma~\ref{lem:quasisoln_limit} implies that the limit $\lim_{n \rightarrow \infty} e_n$ exists. By Lemma~\ref{lem:relerrorbound}, $\lim_{n \rightarrow \infty} \left| e_n \right| \leq \frac{1}{3}$. Finally we note that Lemma~\ref{lem:poincare} implies $\lim_{n \rightarrow \infty} r_n \in \{1, \frac{1}{2} \}$, and a quick computation shows that if $\lim_{n \rightarrow \infty} r_n = \frac{1}{2}$, then necessarily $\lim_{n \rightarrow \infty} \left| e_n \right| = \frac{1}{2}$, which is impossible. Therefore, $\lim_{n \rightarrow \infty} r_n = 1$, as claimed.
\end{proof}

\subsection{Proof of Theorem~\ref{maintheorem}}
Finally, we note that Proposition~\ref{prop:heunsols} together with Proposition~\ref{prop:simplecase}, combined with the arguments in Section~\ref{sec:heunform} and Lemma~\ref{lem:susytransform}, establishes the proof of Theorem~\ref{maintheorem}.

\begin{appendices}

\section{Explicit form of generators} \label{appendix::matrices}
Here we provide the explicit form of the rotation and Lorentz boost matrices used in the computations in Chapter~\ref{sec:symmetries_and_modes}. We fix a set of generators of the Lie algebra $\text{so}(3)$ here:
\begin{gather}
F_1 = \left( \begin{matrix}
0 & 0& 0 \\
0 & 0& -1 \\
0 & 1 & 0
\end{matrix} \right), \quad F_2 = \left( \begin{matrix}
0 & 0& 1 \\
0 & 0& 0 \\
-1 & 0 & 0
\end{matrix} \right), \quad F_3 = \left( \begin{matrix}
0 & -1 & 0 \\
1 & 0& 0 \\
0 & 0 & 0
\end{matrix} \right).
\end{gather}
The generators of the Lie algebra $\text{so}(4)$ we choose are the following
\begin{equation}
\begin{gathered}
\mathbf{F}_1 = \left(
\begin{array}{cccc}
 0 & 0 & 0 & 0 \\
 0 & 0 & -1 & 0 \\
 0 & 1 & 0 & 0 \\
 0 & 0 & 0 & 0 \\
\end{array}
\right), \quad \mathbf{F}_2 =\left(
\begin{array}{cccc}
 0 & 0 & -1 & 0 \\
 0 & 0 & 0 & 0 \\
 1 & 0 & 0 & 0 \\
 0 & 0 & 0 & 0 \\
\end{array}
\right), \\
\mathbf{F}_3 = \left(
\begin{array}{cccc}
 0 & -1 & 0 & 0 \\
 1 & 0 & 0 & 0 \\
 0 & 0 & 0 & 0 \\
 0 & 0 & 0 & 0 \\
\end{array}
\right), \quad \mathbf{F}_4 = \left(
\begin{array}{cccc}
 0 & 0 & 0 & -1 \\
 0 & 0 & 0 & 0 \\
 0 & 0 & 0 & 0 \\
 1 & 0 & 0 & 0 \\
\end{array}
\right), \\
\mathbf{F}_5 = \left(
\begin{array}{cccc}
 0 & 0 & 0 & 0 \\
 0 & 0 & 0 & -1 \\
 0 & 0 & 0 & 0 \\
 0 & 1 & 0 & 0 \\
\end{array}
\right), \quad \mathbf{F}_6 = \left(
\begin{array}{cccc}
 0 & 0 & 0 & 0 \\
 0 & 0 & 0 & 0 \\
 0 & 0 & 0 & -1 \\
 0 & 0 & 1 & 0 \\
\end{array}
\right).
\end{gathered}
\end{equation}
The Lorentz boosts are given by
\begin{equation}
\begin{gathered}
\Lambda_1(\alpha) = \left( \begin{matrix} \cosh \alpha & -\sinh \alpha & 0 & 0 \\[3pt] -\sinh \alpha & \cosh \alpha & 0 & 0 \\[3pt] 0 & 0 & 1 & 0 \\[3pt] 0 & 0 & 0 & 1\end{matrix} \right), \quad \Lambda_2(\alpha) = \left( \begin{matrix} \cosh \alpha & 0 & \sinh \alpha & 0 \\[3pt] 0 & 1 & 0 & 0 \\[3pt] \sinh \alpha & 0 & \cosh \alpha & 0 \\[3pt] 0 & 0 & 0 & 1 \end{matrix} \right), \\[0.1 cm]
\Lambda_3(\alpha) = \left( \begin{matrix} \cosh \alpha & 0 & 0 & -\sinh \alpha \\[3pt] 0 & 1 & 0 & 0 \\[3pt] 0 & 0 & 1 & 0 \\[3pt] -\sinh \alpha & 0 & 0 & \cosh \alpha \end{matrix} \right).
\end{gathered}
\end{equation}

\section{Omitted computations}
\subsection{Roots of the quasi-solutions \texorpdfstring{$\tilde{r}_n$}{rn}} \label{sec::zeroes}
In this section we provide the explicit formulas for the roots of the approximations $\tilde{r}_n$ as required in the first step of the proof of Lemma~\ref{lem::bounds}. For fixed $n,l$ and $m$, $\tilde{r}_n$ is a quadratic polynomial in $\lambda$. Therefore its roots are readily computed. We provide the explicit form of the roots in this section for all cases. \\

\noindent \textbf{Case $l \geq 3, m= l-1$:}
Here the roots are directly computed to be located at
\begin{equation}
\begin{split}
\lambda=  &\frac{l (8 n+8)+8 n^2+20 n+12}{4 l n+4 l+4 n^2+10 n+6} \Bigg( -(l+2 n+1)\\
&\pm \sqrt{\frac{6 l^2 n+20 l^2+30 l n^2+199 l n+162 l+48 n^3+262 n^2+313 n+218}{8 (3 n+10)}} \Bigg).
\end{split}
\end{equation}
Both of these values are in fact negative real numbers. To see this first observe that the expression in the square root is clearly positive for $n \geq 1$. Therefore we only need to check that the last expression in the first line is greater in absolute value than the square root. We therefore calculate the difference of the square of the last expression from the first line and the expression inside the square root. We obtain
\begin{equation}
\frac{18 l^2 n+60 l^2+66 l n^2+169 l n-2 l+48 n^3+154 n^2+31 n-138}{8 (3 n+10)}
\end{equation}
which is easily verified to be positive for all $l \geq 3$ and $n \geq 2$ by shifting the variables and expanding. For the remaining cases we will simply provide the formula and omit the proof of negativity, as it is a similar computation. \\

\noindent \textbf{Case $l \geq 3, m= l$:}
\begin{equation}
\begin{split}
\lambda=  &\frac{l (8 n+8)+8 n^2+20 n+12}{4 l n+4 l+4 n^2+10 n+6} \Bigg( -(l+2 n+1)\\
&\pm \sqrt{\frac{2 l^2 n+12 l^2+10 l n^2+95 l n+50 l+16 n^3+132 n^2+106 n+60}{8 (n+6)}} \Bigg).
\end{split}
\end{equation} 

\noindent \textbf{Case $l \geq 3, m= l+1$:}
\begin{equation}
\begin{split}
\lambda=  &\frac{l (8 n+8)+8 n^2+20 n+12}{4 l n+4 l+4 n^2+10 n+6} \Bigg( -(l+2 n+1)\\
&\pm \sqrt{\frac{4 l^2 n+30 l^2+20 l n^2+208 l n+19 l+32 n^3+300 n^2+116 n-9}{8 (2 n+15)}} \Bigg).
\end{split}
\end{equation}

\noindent \textbf{Case $l =1, m= 1$:}
\begin{equation}
\lambda= \frac{8 n^2+36 n+28}{14+18n+4n^2} \left( -(3+2n) \pm \sqrt{\frac{6 n^3+35 n^2+75 n+51}{3n+8}} \right).
\end{equation}

\noindent \textbf{Case $l =1, m= 2$:}
\begin{equation}
\lambda= \frac{8 n^2+28 n+20}{10+14n+4n^2} \left( -(2+2n) \pm \sqrt{\frac{-2 + 13 n + 17 n^2 + 2 n^3}{n+7}} \right).
\end{equation}

\noindent \textbf{Case $l =2, m= 1$:}
\begin{equation}
\lambda= \frac{8 n^2+44 n+36}{18+22n+4n^2} \left( -(4+2n) \pm \sqrt{\frac{4 n^3+40 n^2+107 n+111}{2n+12}} \right).
\end{equation}

\noindent \textbf{Case $l =2, m= 2$:}
\begin{equation}
\lambda= \frac{8 n^2+36 n+28}{4 n^2+18 n+14} \left( -(3+2n) \pm \sqrt{\frac{12 n^3+98 n^2+162 n+105}{6 n+35}} \right).
\end{equation}

\noindent \textbf{Case $l =2, m= 3$:}
\begin{equation}
\lambda= \frac{8 n^2+36 n+28}{4 n^2+18 n+14} \left( -(3+2n) \pm \sqrt{\frac{8 n^3+116 n^2+194 n+129}{4 n+47}} \right).
\end{equation}

\subsection{Analyticity of \texorpdfstring{$r_{n_0}$}{rn0}} \label{sec::wall}
In this section we provide the proof of analyticity of $r_{n_0}$ as a function of $\lambda$ in the region $\Re \lambda \geq \modebound$, as required in the second part of the proof of Lemma~\ref{lem::bounds}. Recall that the initial value $n_0$ we choose depends on $l$ and $m$, as specified in Table~\ref{table_bounds}. The proof of analyticity is based on Wall's criterion~\cite{wall}.

First we note that $r_n$ is a rational function of $\lambda$, so let $d_n$ be its denominator, a polynomial in $\lambda$ of degree $2n$. To show analyticity of $r_n$ as a function $\lambda$ when $\Re \lambda \geq 0$, we need to show that $d_n$ only vanishes in the region $\Re \lambda < 0$. Let $\hat{d}_n$ denote the polynomial obtained from $d_n$ by setting all coefficients of even powers of $\lambda$ to be zero. By successive polynomial division (using $\lambda$ as the variable), we can obtain a continued fraction expansion of the quotient $\frac{\hat{d}_n}{d_n}$. Wall's criterion implies that this expansion takes the form:
\begin{equation}
\frac{\hat{d}_n}{d_n} = \frac{1}{1+x_1  \lambda +} \, \frac{1}{x_2  \lambda +} \, \frac{1}{x_3  \lambda +} \dots \frac{1}{x_{2n}  \lambda},
\end{equation}
where all coefficients satisfy $x_i > 0$ if and only if $d_n$ has all its roots in the region $\Re \lambda < 0$. Therefore, we compute this continued fractions expansion in each case and list the resulting coefficients $x_i$. \\

\noindent \textbf{Case $l =1, m=1$:} In this case $r_2$ is given by
\begin{equation}
r_2(\lambda)= \frac{\lambda ^6+60 \lambda ^5+1201 \lambda ^4+10152 \lambda ^3+37851 \lambda ^2+55580 \lambda +19635}{132 \left(\lambda ^4+32 \lambda ^3+266 \lambda ^2+592 \lambda +245\right)},
\end{equation}
so that $d_2$ is a polynomial of degree $4$. Here we find
\begin{equation}
x_1= \frac{1}{32}, \quad x_2 = \frac{64}{495}, \quad x_3=\frac{49005 }{110944}, \quad x_4= \frac{55472}{24255}.
\end{equation}

\noindent \textbf{Case $l =1, m=2$:}
Here $r_4$ has the numerator given by
\begin{equation}
\begin{gathered}
\lambda ^{10}+120 \lambda ^9+5655 \lambda ^8+138560 \lambda ^7+1969418 \lambda ^6+17090160 \lambda ^5 \\
+92390286 \lambda ^4+310928256 \lambda ^3+641783397 \lambda ^2+787540056 \lambda +488363755,
\end{gathered}
\end{equation}
and denominator
\begin{equation}
\begin{gathered}
d_4= 260 (\lambda ^8+80 \lambda ^7+2356 \lambda ^6+33584 \lambda ^5+256238 \lambda ^4 \\
+1088432 \lambda ^3+2600580 \lambda ^2+3504848 \lambda +2391129).
\end{gathered}
\end{equation}
For the continued fractions expansion we find
\begin{equation}
\begin{gathered}
x_1= \frac{1}{80}, \quad x_2 = \frac{400}{9681}, \quad x_3=\frac{13388823}{162909760}, \quad x_4= \frac{16587243689536}{113962657805643} \\[0.2cm]
x_5=\frac{47531204298703335341887}{191285306692074662805504},\\[0.2cm]
x_6 = \frac{17233719835941124753004235620352}{40133868683257984044230012780567}, \\[0.2cm]
x_7 = \frac{4841112694132470445768992098446182544921}{6482346130187177237572701069669289181184} \\[0.2cm]
x_8 = \frac{141331078934075674653925376  }{166370224608919186154542269}.
\end{gathered}
\end{equation}

\noindent \textbf{Case $l =2, m=1$:} In this case $r_2$ is given by
\begin{equation}
r_2(\lambda)=\frac{\lambda ^6+72 \lambda ^5+1813 \lambda ^4+20400 \lambda ^3+108019 \lambda ^2+251784 \lambda +194103}{156 \left(\lambda ^4+40 \lambda ^3+458 \lambda ^2+1688 \lambda +1701\right)},
\end{equation}
so that $d_2$ is a polynomial of degree $4$. Here we find
\begin{equation}
x_1= \frac{1}{40}, \quad x_2 = \frac{200}{2079}, \quad x_3=\frac{22869 }{83840}, \quad x_4= \frac{16768}{18711}.
\end{equation}

\noindent \textbf{Case $l =2, m=2$:} In this case $r_3$ has the numerator
\begin{equation}
\begin{gathered}
\lambda ^8+96 \lambda ^7+3456 \lambda ^6+60912 \lambda ^5+574976 \lambda ^4 \\
+2974208 \lambda ^3+8253120 \lambda ^2+11432704 \lambda +6432768,
\end{gathered}
\end{equation}
and denominator
\begin{equation}
\begin{gathered}
d_3 = 208 \left(\lambda ^6+60 \lambda ^5+1216 \lambda ^4
+10488 \lambda ^3+39936 \lambda ^2+65984 \lambda +40704\right).
\end{gathered}
\end{equation}
Here we find the following coefficients for the expansion:
\begin{equation}
\begin{gathered}
x_1= \frac{1}{60}, \quad x_2 = \frac{150}{2603}, \quad x_3=\frac{6775609 }{53687060}, \quad x_4= \frac{21617253085827}{80716560627608} \\[0.2cm]
x_5 = \frac{30048789523708855778}{51443003963743308357}, \quad x_6 = \frac{6388007832023}{4930439162424}.
\end{gathered}
\end{equation}

\noindent \textbf{Case $l =3, m=3$:} Here we find that $r_3$ has numerator
\begin{equation}
\begin{gathered}
\lambda ^8+112 \lambda ^7+4804 \lambda ^6+103408 \lambda ^5 +1229214 \lambda ^4 \\
+8329808 \lambda ^3+31803380 \lambda ^2+63649104 \lambda +52369065,
\end{gathered}
\end{equation}
and denominator
\begin{equation}
\begin{gathered}
d_3=240 \left(\lambda ^6+72 \lambda ^5+1813 \lambda ^4+20400 \lambda ^3+109011 \lambda ^2+272264 \lambda +260055\right).
\end{gathered}
\end{equation}
Here we obtain
\begin{equation}
\begin{gathered}
x_1= \frac{1}{72}, \quad x_2 = \frac{216 }{4589}, \quad x_3=\frac{21058921  }{212658048}, \quad x_4= \frac{117769389008256 }{605966643139039} \\[0.2cm]
x_5 = \frac{17436580563514884792601 }{45956710403723331293184}, \quad x_6 = \frac{27661586227277824 }{34339650333737805}.
\end{gathered}
\end{equation}

\noindent \textbf{Case $l\geq 3, m=l-1$:}
In this case $r_2$ has numerator
\begin{equation}
\begin{gathered}
\lambda ^5+38 \lambda ^4+440 \lambda ^3+1816 \lambda ^2+2096 \lambda +27 l^5+81 \lambda  l^4+306 l^4 \\
+90 \lambda ^2 l^3+804 \lambda  l^3+1224 l^3
+46 \lambda ^3 l^2+744 \lambda ^2 l^2+2792 \lambda  l^2 \\ 
+2104 l^2+11 \lambda ^4 l +284 \lambda ^3 l+2008 \lambda ^2 l+3984 \lambda  l+1264 l,
\end{gathered}
\end{equation}
and denominator
\begin{equation}
\begin{gathered}
d_2=12 (2 l+7) \left(\lambda ^3+18 \lambda ^2+68 \lambda +9 l^3+15 \lambda  l^2+46 l^2+7 \lambda ^2 l+64 \lambda  l+52 l\right),
\end{gathered}
\end{equation}
so that $d_2$ is a polynomial of degree $3$. Here we find
\begin{equation}
\begin{gathered}
x_1= \frac{1}{7 l+18}, \quad x_2 = \frac{(7 l+18)^2}{8 \left(12 l^3+84 l^2+197 l+153\right)}, \\[0.2cm]
x_3=\frac{96 l^3+672 l^2+1576 l+1224}{(7 l+18) \left(9 l^3+46 l^2+52 l\right)}.
\end{gathered}
\end{equation}

\noindent \textbf{Case $l\geq 4, m=l$:} Here $r_2$ has numerator
\begin{equation}
\begin{gathered}
\lambda ^6+36 \lambda ^5+364 \lambda ^4+936 \lambda ^3-1536 \lambda ^2-4192 \lambda +27 l^6+ \\
108 \lambda  l^5+360 l^5+171 \lambda ^2 l^4 +1236 \lambda  l^4+1524 l^4+136 \lambda ^3 l^3+1632 \lambda ^2 l^3 \\
+4424 \lambda  l^3+1944 l^3+57 \lambda ^4 l^2+1032 \lambda ^3 l^2 +4704 \lambda ^2 l^2+4440 \lambda  l^2-1440 l^2 \\ 
+12 \lambda ^5 l +312 \lambda ^4 l+2168 \lambda ^3 l+3432 \lambda ^2 l-3104 \lambda  l-3360 l,
\end{gathered}
\end{equation}
and denominator given by
\begin{equation}
\begin{gathered}
d_2=12 (2 l+7) (\lambda ^4+16 \lambda ^3+32 \lambda ^2-136 \lambda +9 l^4+24 \lambda  l^3+52 l^3+22 \lambda ^2 l^2 \\
+112 \lambda  l^2+24 l^2+8 \lambda ^3 l+76 \lambda ^2 l+56 \lambda  l-120 l).
\end{gathered}
\end{equation}
In this case, $d_2$ is again a polynomial of degree four in $\lambda$ and we find
\begin{equation}
\begin{gathered}
x_1= \frac{1}{8 l+16}, \quad x_2 = \frac{8 (l+2)^2}{19 l^3+106 l^2+177 l+81}, \\[0.2cm]
x_3=\frac{\left(19 l^3+106 l^2+177 l+81\right)^2}{8 (l+2) \left(48 l^6+496 l^5+1880 l^4+2956 l^3+955 l^2-1962 l-1377\right)}, \\[0.2cm]
x_4 = \frac{8 \left(48 l^6+496 l^5+1880 l^4+2956 l^3+955 l^2-1962 l-1377\right)}{l \left(9 l^3+52 l^2+24 l-120\right) \left(19 l^3+106 l^2+177 l+81\right)}.
\end{gathered}
\end{equation}
Note that even though it is not immediately apparent, one can easily see that the coefficients $x_3,x_4$ are positive for $l \geq 2$ by inserting $l+2$ for $l$ and expanding. Then all coefficients become positive. \\

\noindent \textbf{Case $l\geq 2, m=l+1$:} Here $r_2$ has numerator
\begin{equation}
\begin{gathered}
\lambda ^6+36 \lambda ^5+376 \lambda ^4+1224 \lambda ^3+160 \lambda ^2-2112 \lambda \\
+27 l^6+108 \lambda  l^5+468 l^5 +171 \lambda ^2 l^4 +1524 \lambda  l^4+2832 l^4+136 \lambda ^3 l^3+1896 \lambda ^2 l^3 \\
+7112 \lambda  l^3+7112 l^3+57 \lambda ^4 l^2+1128 \lambda ^3 l^2 +6456 \lambda ^2 l^2+12120 \lambda  l^2 \\
+6464 l^2+12 \lambda ^5 l+324 \lambda ^4 l+2552 \lambda ^3 l+6616 \lambda ^2 l+4256 \lambda  l+576 l,
\end{gathered}
\end{equation}
and denominator
\begin{equation}
\begin{gathered}
d_2=12 (2 l+7) (\lambda ^4+16 \lambda ^3+40 \lambda ^2-72 \lambda +9 l^4+24 \lambda  l^3+76 l^3 \\
+22 \lambda ^2 l^2+144 \lambda  l^2+144 l^2+8 \lambda ^3 l+84 \lambda ^2 l+152 \lambda  l-24 l).
\end{gathered}
\end{equation}
For the continued fraction expansion of $d_2$ we find
\begin{equation}
\begin{gathered}
x_1= \frac{1}{8 l+16}, \quad x_2 = \frac{8 (l+2)^2}{19 l^3+110 l^2+189 l+89}, \\[0.2cm]
x_3=\frac{\left(19 l^3+110 l^2+189 l+89\right)^2}{8 (l+2) \left(48 l^6+560 l^5+2424 l^4+4732 l^3+3723 l^2+86 l-801\right)}, \\[0.2cm]
x_4 = \frac{8 \left(48 l^6+560 l^5+2424 l^4+4732 l^3+3723 l^2+86 l-801\right)}{l \left(9 l^3+76 l^2+144 l-24\right) \left(19 l^3+110 l^2+189 l+89\right)}.
\end{gathered}
\end{equation}
Here it is again not immediately apparent that $x_3$ and $x_4$ are positive. This can be remedied by shifting $l$ to $l+1$. Then all coefficients become positive.

\subsection{Supplementary online material} \label{sec::github}
In this section, we describe the contents of the files made available as supplementary online material. At various points in the main body of this work, the authors have decided to provide explicit expressions in a digital format rather than in the text. The expressions involved are cumbersome, and making them available in a digital format will facilitate reproducibility. We provide the following list of supplementary files:
\begin{center}
 \texttt{11.csv}, \texttt{12.csv}, \texttt{21.csv}, \texttt{22.csv}, \texttt{23.csv}, \\
 \texttt{32.csv}, \texttt{33.csv}, \texttt{l1.csv}, \texttt{l2.csv}, \texttt{l3.csv}.
\end{center}
The files \texttt{l1.csv}, \texttt{l2.csv} and \texttt{l3.csv} correspond to the cases $(l,m)=(l,l-1)$ with $l \geq 4$, $(l,m)=(l,l)$ with $l \geq 4$, respectively $(l,m)=(l,l+1)$ with $l \geq 3$. The remaining files named in the form \texttt{lm.csv} correspond to the case $(l,m)$ provided in the name. A description of what is contained in each file is provided in Table~\ref{table:filecontents}. The contents of each file are machine readable (for instance with the Mathematica software package).

\renewcommand{\arraystretch}{1.6}
\begin{table}[ht]
\caption{Contents of the supplementary files. We represent the growth rate $\lambda$ as the variable \texttt{x} throughout the files.}\label{table:filecontents}%
\begin{tabular}{@{}lp{10cm}@{}}
\toprule
Variable name & Description \\
\midrule
\texttt{A} & Coefficient $A_n$ as defined in~\eqref{equ::recursioncoeffA}. \\
\texttt{B } & Coefficient $B_n$ as defined in~\eqref{equ::recursioncoeffB}. \\
\texttt{n0} & Value of $n_0$, as in Table~\ref{table_bounds}. \\
$\texttt{r}_{n_0}$ & Explicit form of $r_{n_0}$. \\
\texttt{rtilde} & Explicit form of the quasi-solution $\tilde{r}_n$ defined in Table~\ref{table_approximations}. \\
\texttt{a} & Explicit form of the coefficient $a_n$ defined in~\eqref{equ:anbn} \\
\texttt{b} & Explicit form of the coefficient $b_n$ defined in~\eqref{equ:anbn} \\
\texttt{esta} & The polynomial $F y^2 - G x^2$ from the proof of Lemma~\ref{lem::bounds} for the coefficient $a_n$. We shift $n \mapsto n + n_0$. In the files \texttt{l1.csv}, \texttt{l2.csv} respectively \texttt{l3.csv} where $l$ appears as a variable, we also shift $l \mapsto l + l_0$ where $l_0 = 4, 4,3$ respectively. \\
\texttt{estb} & Analogous to \texttt{esta} but for the coefficient $b_n$. The same shifts in $n$ and $l$ are applied as for \texttt{esta} above. \\
\texttt{esterror} & Analogous to \texttt{esta} but for $|e_{n_0}|$. Since $n = n_0$ here, $n$ does not appear as a variable. In the file \texttt{l1.csv} we do not shift the value of $l$, in \texttt{l2.csv} we shift $l \mapsto l+4$ and in \texttt{l3.csv} we shift $l \mapsto l+2$. \\
\botrule
\end{tabular}
\end{table}
\renewcommand{\arraystretch}{1.0}

\end{appendices}

\clearpage
\bibliography{sn-bibliography}

\end{document}